
\documentclass[12pt]{article}
\usepackage{graphicx}
\usepackage{amsmath}
\usepackage{amsfonts}
\usepackage{amssymb}
\newtheorem{theorem}{Theorem}

\newtheorem{corollary}[theorem]{Corollary}

\newtheorem{definition}[theorem]{Definition}

\newtheorem{lemma}[theorem]{Lemma}

\newtheorem{proposition}[theorem]{Proposition}
\newtheorem{remark}[theorem]{Remark}

\newenvironment{proof}[1][Proof]{\textbf{#1.} }{\ \rule{0.5em}{0.5em}}

\begin{document}

\title{On a new method for controlling exponential processes}
\author{O. Kounchev and H. Render}
\maketitle
\begin{abstract}
Unlike the classical polynomial case there has not been invented up to very
recently a tool similar to the Bernstein-B\'{e}zier representation which would
allow us to control the behavior of the exponential polynomials. The
exponential analog to the classical Bernstein polynomials has been introduced
in the recent authors' paper \cite{AKR}, and this analog retains all basic
propeties of the classical Bernstein polynomials. The main purpose of the
present paper is to contribute in this direction, by proving some important
properties of the \emph{Bernstein exponential operator} which has been
introduced in \cite{AKR}. We also fix our attention upon some special type of
exponential polynomials which are particularly important for the further
development of theory of representation of Multivariate data.

The first-named author has been supported by the Greek-Bulgarian bilateral
project B-Gr17, 2005-2008. The second-named author is partially supported by
Grant BFM2003-06335-C03-03 of the D.G.I. of Spain. Both authors acknowledge
support within the project ``Institutes Partnership'' with the Alexander von
Humboldt Foundation, Bonn.

O. Kounchev: Institute of Mathematics and Informatics, Bulgarian Academy of
Sciences, 8 Acad. G. Bonchev Str., 1113 Sofia, Bulgaria. {kounchev@gmx.de} H.
Render: Departamento de Matem\'{a}ticas y Computaci\'{o}n, Universidad de La
Rioja, Edificio Vives, Luis de Ulloa s/n., 26004 Logro\~{n}o, Espa\~{n}a. {render@gmx.de}
\end{abstract}

\section{Introduction}

With the great increase in the number of businesses making a presence on the
Internet and the increase in the number of cyber-customers, the chances of
computer security attacks increase daily. How much should an enterprise or
organization budget to defend against Internet attacks?

As it is well know the processes modelled by ordinary differential equations
and the stochastic differential equations describe many important practical
events, and thus provide very efficient models for processes coming from
physics, finance, etc. As is well known their solutions may be approximated
efficiently by means of finite linear combinations of exponential functions,
called sometimes exponential polynomials. Let us mention some recent
applications to exponential smoothing used for time series models which fit
best for forecasting \emph{Internet Security attacks}, \cite{korzyk}.

Going further in this direction, if one considers a process which experiences
jumps or unsmoothness, then it may be approximated by means of piecewise
exponential polynomials.

For the control and design of polynomials the famous Bernstein polynomials
provide a very efficient way to control their behaviour. Their further
development as the Bezier curves provides a very efficient method to design a
prescribed form, which is an indispensable tool for Computer Design. In a
similar way the theory of $B-$splines provides us with an indispensable tool
for the control and design of the classical splines.

Let us add to the above that for the purposes of fast \emph{recognition},
\emph{representation} and \emph{compression} of curves and surfaces such tools
as the B\'{e}zier curves and $B-$splines curves are very important. Since many
of the real processes produce \emph{observational} and \emph{surveillance
data} which carry exponential character, it is clear that it is very important
to have an efficient tools for their \emph{representation}, \emph{design} and
\emph{control}.

It is curious to mention that unlike the classical polynomial case there has
not been invented up to very recently a tool similar to the Bernstein-B\'{e}%
zier representation which would allow us to control the behavior of the
exponential polynomials. The exponential analog to the classical Bernstein
polynomials has been introduced in the recent authors' paper \cite{AKR}, and
this analog retains all basic propeties of the classical Bernstein polynomials.

The main purpose of the present paper is to contribute in this direction, by
proving some important properties of the \emph{Bernstein exponential operator}
which has been introduced in \cite{AKR}. We also fix our attention upon some
special type of exponential polynomials which are particularly important for
the further development of theory of representation of Multivariate data.

Let us recall at first shortly the concept of B\'{e}zier curves and its
relationship to the Bernstein polynomials: Let $b_{0},...,b_{n}$ be vectors
either in $\mathbb{R}^{2}$ or $\mathbb{R}^{3}$ and $t\in\mathbb{R},$ and
interpret $b_{k}$ as a constant curve, i.e. that $b_{k}^{0}\left(  t\right)
:=b_{k}$ for $k=0,...,n.$ Then define new curves $b_{k}^{1}\left(  t\right)  $
for $k=0,...,n-1$ by $b_{k}^{1}\left(  t\right)  =\left(  1-t\right)
b_{k}^{0}\left(  t\right)  +tb_{k+1}^{0}\left(  t\right)  .$ Repeating this
process one arrives at new curves
\[
b_{k}^{r}\left(  t\right)  =\left(  1-t\right)  b_{k}^{r-1}\left(  t\right)
+tb_{k+1}^{r-1}\left(  t\right)  \text{ for }k=0,...,n-r
\]
In the last step, i.e. for $r=n,$ one finally obtains exactly one curve
\[
b_{0}^{n}\left(  t\right)  =\left(  1-t\right)  b_{0}^{n-1}\left(  t\right)
+tb_{1}^{n-1}\left(  t\right)  ,
\]
the so-called \emph{B\'{e}zier curve}. The polygon formed by $b_{0},...,b_{n}$
is called the \emph{B\'{e}zier polygon or control polygon}. The B\'{e}zier
curve has the property that the curve $b_{0}^{n}\left(  t\right)  $ is in the
convex hull generated by the points $b_{0},...,b_{n}.$ Moreover the two points
$b_{0}$ and $b_{n}$ are fixed, i.e. $b_{0}^{n}\left(  0\right)  =b_{0}$ and
$b_{0}^{n}\left(  1\right)  =b_{n}.$ An explicit form for the B\'{e}zier curve
is
\[
b_{0}^{n}\left(  t\right)  =\sum_{k=0}^{n}b_{k}p_{n,k}\left(  t\right)
\]
where $p_{n,k}\left(  t\right)  :=\binom{n}{k}t^{k}\left(  1-t\right)  ^{n-k}$
are called the \emph{Bernstein basis polynomials.} In the sequel we shall
focus on generalizations of Bernstein basis polynomials which have arisen
recently in Computer Aided Geometric Design for modeling parametric curves.
Instead of the basic polynomials $1,x,....,x^{n}$ one consider different
systems of basic functions $f_{0},...,f_{n}$, e.g.
\[
1,x,...,x^{n-2},\cos x,\sin x,
\]
which are better adapted to curves in spherical coordinates, see e.g.
\cite{Cost00}, \cite{MPS97}, \cite{Zhan96} and \cite{CLM}. In mathematical
terms it will be required that the linear span of the basis functions
$f_{0},...,f_{n}$ forms a an extended Chebyshev system. Recall that a subspace
$U_{n}$ of $C^{n}\left(  I\right)  $ (the space of $n$-times continuously
differentiable complex-valued functions on a interval $I$) is called an
\emph{extended Chebyshev system for a subset }$A\subset I$, if $U_{n}$ has
dimension $n+1$ and each non-zero function $f\in U_{n}$ vanishes at most $n$
times on the subset $A$ (counted with multiplicities). A system $p_{n,k}\in
U_{n},k=0,...,n$, is a \emph{Bernstein-like basis} for $U_{n}$ relative to
$a,b\in I$, if for each $k=0,...,n$ the function $p_{n,k}$ has a zero of order
$k$ at $a$, and a zero of order $n-k$ at $b$.

In the following we shall consider Bernstein basis polynomials for the space
of \emph{exponential polynomials} $E_{\left(  \lambda_{0},...,\lambda
_{n}\right)  }$ (induced by a linear differential operator $L)$ defined by
\begin{equation}
E_{\left(  \lambda_{0},...,\lambda_{n}\right)  }:=\left\{  f\in C^{\infty
}\left(  \mathbb{R}\right)  :Lf=0\right\}  , \label{Espace}%
\end{equation}
where $\lambda_{0},...,\lambda_{n}$ are complex numbers, and $L$ is the linear
differential operator with constant coefficients defined by
\begin{equation}
L:=L\left(  \Lambda\right)  :=\left(  \frac{d}{dx}-\lambda_{0}\right)
....\left(  \frac{d}{dx}-\lambda_{n}\right)  . \label{eqdefL}%
\end{equation}
Exponential polynomials are sometimes called $L$\emph{-polynomials, }and they
provide natural generalization of classical, trigonometric, and hyperbolic
polynomials (see \cite{Schu83}), and the so-called $\mathcal{D}$-polynomials
considered in \cite{MoNe00}. Exponential polynomials arise naturally in the
context of a class of multivariate splines, the so-called polysplines, see
\cite{oggybook}, \cite{KounchevRenderJAT}). Important in this context are
exponential splines associated with linear differential operators $L_{2s+2}$
of order $2s+2$ of the form
\[
L_{2s+2}=\left(  \frac{d}{dx}-\lambda\right)  ^{s+1}\left(  \frac{d}{dx}%
+\mu\right)  ^{s+1}%
\]
which are parametrized by real numbers $\lambda$ and $\mu$. According to the
above notation $\Lambda_{n}=\left(  \lambda_{0},...,\lambda_{n}\right)  $ we
set $n=2s+1$ and define the vector
\begin{equation}
\Lambda_{2s+1}\left(  \lambda,\mu\right)  :=\left(  \underset{s+1}%
{\underbrace{\lambda,\lambda,...,\lambda,}}\underset{s+1}{\underbrace{\mu
,\mu,...,\mu}}\right)  \label{eigenval}%
\end{equation}
containing $s+1$ times $\lambda$ and $s+1$ times $\mu.$

Let us return to the general theory of exponential polynomials, and let us
recall the general fact (cf. \cite{Micc76}) that for $\Lambda_{n}=\left(
\lambda_{0},...,\lambda_{n}\right)  \in\mathbb{C}^{n+1}$ there exists a unique
function $\Phi_{\Lambda_{n}}\in E_{\left(  \lambda_{0},...,\lambda_{n}\right)
}$ such that $\Phi_{\Lambda_{n}}\left(  0\right)  =....=\Phi_{\Lambda_{n}%
}^{\left(  n-1\right)  }\left(  0\right)  =0$ and $\Phi_{\Lambda_{n}}^{\left(
n\right)  }\left(  0\right)  =1.$ We shall call $\Phi_{\Lambda_{n}}$ the
\emph{fundamental function} in $E_{\left(  \lambda_{0},...,\lambda_{n}\right)
}.$ An explicit formula for $\Phi_{\Lambda_{n}}$ is
\begin{equation}
\Phi_{\Lambda_{n}}\left(  x\right)  :=\left[  \lambda_{0},...,\lambda
_{n}\right]  e^{xz}=\frac{1}{2\pi i}\int_{\Gamma_{r}}\frac{e^{xz}}{\left(
z-\lambda_{0}\right)  ...\left(  z-\lambda_{n}\right)  }dz \label{defPhi}%
\end{equation}
where $\left[  \lambda_{0},...,\lambda_{n}\right]  $ denotes the divided
difference, and $\Gamma_{r}$ is the path in the complex plane defined by
$\Gamma_{r}\left(  t\right)  =re^{it}$, $t\in\left[  0,2\pi\right]  $,
surrounding all the scalars $\lambda_{0},...,\lambda_{n}$. The fundamental
function $\Phi_{\Lambda_{n}}$ is an important tool in the spline theory based
on exponential polynomials (see \cite{Micc76}) and in the wavelet analysis of
exponential polynomials, see \cite{dDR93}, \cite{LySc93}, \cite{oggybook} ,
\cite{kounchevrenderappr}, \cite{kounchevrenderpams}.

In Section \ref{S1} we provide some basic results about the fundamental
function, in particular we derive the Taylor expansion of the fundamental
function $\Phi_{\left(  \lambda_{0},...,\lambda_{n}\right)  }$. Section
\ref{S2} is devoted to the analysis of the fundamental function with respect
to the eigenvalues (\ref{eigenval}), and in order to have a short notation we
set using (\ref{defPhi})
\[
\Phi_{2s+1}\left(  x\right)  :=\Phi_{\Lambda_{2s+1}\left(  \lambda,\mu\right)
}\text{ for }\lambda=1\text{ and }\mu=-1.
\]
We shall give explicit formula for $\Phi_{2s+1}\left(  x\right)  .$ So far it
turns out that the following recursion formula is more important:
\[
\Phi_{2s+3}\left(  x\right)  =x^{2}\frac{1}{4s\left(  s+1\right)  }\Phi
_{2s-1}\left(  x\right)  -\frac{2s+1}{2s+2}\Phi_{2s+1}\left(  x\right)  .
\]
We shall prove the non-trivial fact that for each fixed $x>0$ the sequence
\begin{equation}
\frac{\Phi_{2s}\left(  x\right)  }{\Phi_{2s-}\left(  x\right)  }\rightarrow1
\label{eqphic}%
\end{equation}
for $s\rightarrow\infty;$ here $\Phi_{2s}\left(  x\right)  $ is the
fundamental function with respect to the vector $\Lambda_{2s}$ consisting of
$s+1$ eigenvalues $1$ and $s$ eigenvalues $-1,$ and similarly $\Phi
_{2s-}\left(  x\right)  $ is the fundamental function with respect the vector
$\Lambda_{2s}$ consisting of $s$ eigenvalues $1$ and $s+1$ eigenvalues $-1.$

In Section \ref{S3} we shall determine some generating functions: we prove
that
\[
\sum_{s=0}^{\infty}\Phi_{2s+1}\left(  x\right)  \cdot y^{s}=\frac{1}%
{\sqrt{y+1}}\sinh\left(  x\sqrt{y+1}\right)
\]
and that
\[
\sum_{s=0}^{\infty}\Phi_{s}\left(  x\right)  \cdot y^{s}=\frac{1+y}%
{\sqrt{y^{2}+1}}\sinh\left(  x\sqrt{y^{2}+1}\right)  +\cosh\left(
x\sqrt{y^{2}+1}\right)  .
\]

In Section \ref{S4} we give a detailed introduction to the notion of a
Bernstein basis $p_{n,k}\left(  x\right)  ,k=0,...,n,$ in the setting of
exponential polynomials with arbitrary (complex) eigenvalues. We shall give a
new proof of the result that there exists a Bernstein basis $p_{n,k}\left(
x\right)  ,k=0,...,n,$ in $E_{\left(  \lambda_{0},...,\lambda_{n}\right)  }$
for points $a\neq b$ if and only if $E_{\left(  \lambda_{0},...,\lambda
_{n}\right)  }$ is a Chebyshev space with respect to $a,b.$ In the case that
$\lambda_{0},...,\lambda_{n}$ are real it is well known that $E_{\left(
\lambda_{0},...,\lambda_{n}\right)  }$ is a Chebyshev space with respect to
the interval $\left[  a,b\right]  $ and in this case $p_{n,k}\left(  x\right)
$ may be chosen strictly positive for $x$ in the open interval $\left(
a,b\right)  $.

In Section \ref{S5} we shall derive recursion formulas for the Bernstein basis
$p_{n,k}\left(  x\right)  ,k=0,...,n$ for the special system of eigenvalues
$\Lambda_{2s+1}$ which have been used in Section \ref{S2}.

A remarkable result was proved recently in \cite{CMP04}, \cite{Mazu05} for
certain classes of extended Chebyshev systems $U_{n}$: Assume that the
constant function $1$ is in $U_{n};$ clearly then there exist coefficients
$\alpha_{k},k=0,...,n,$ such that $1=\sum_{k=0}^{n}\alpha_{k}p_{n,k},$ since
$p_{n,k},k=0,...,n$, is a basis. The \emph{normalization property} proved in
\cite{CMP04} and \cite{Mazu05} says that the coefficients $\alpha_{k}$ are
\emph{positive}. It seems that the paper \cite{AKR} addresses for the first
time the question whether one can construct a Bernstein-type operator based on
a Bernstein basis $p_{n,k},k=0,...,n$, in the context of exponential
polynomials, i.e. operators of the form
\begin{equation}
B_{n}\left(  x\right)  :=\left[  B_{n}f\right]  \left(  x\right)  :=\sum
_{k=0}^{n}\alpha_{k}f\left(  t_{k}\right)  p_{n,k}\left(  x\right)
\label{defBN}%
\end{equation}
where the coefficients $\alpha_{0},...,\alpha_{n}$ should be positive and the
knots $t_{0},...,t_{n}$ in the interval $\left[  a,b\right]  .$ In \cite{AKR}
the following basic result was proven:

\begin{theorem}
\label{Thm1}Assume that $\lambda_{0},...,\lambda_{n}$ are \emph{real} and
$\lambda_{0}\neq\lambda_{1}.$ Then there exist unique points $t_{0}%
<t_{1}<...<t_{n}$ \emph{in the interval} $\left[  a,b\right]  $ and unique
\emph{positive} coefficients $\alpha_{0},...,\alpha_{n}$ such that the
operator $B_{n}:C\left[  a,b\right]  \rightarrow E_{\left(  \lambda
_{0},...,\lambda_{n}\right)  }$ defined by (\ref{defBN}) has the following
reproduction property
\begin{equation}
\left[  B_{n}\left(  e^{\lambda_{0}\cdot}\right)  \right]  \left(  x\right)
=e^{\lambda_{0}x}\text{ and }\left[  B_{n}\left(  e^{\lambda_{1}\cdot}\right)
\right]  \left(  x\right)  =e^{\lambda_{1}x}. \label{eqnorming}%
\end{equation}
\end{theorem}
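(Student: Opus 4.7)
The plan is to translate the reproduction requirement (\ref{eqnorming}) into a $2(n+1)$-equation algebraic system in the $2(n+1)$ unknowns $\alpha_{k},t_{k}$ and solve it explicitly, exploiting the Chebyshev structure of $E_{(\lambda_{0},\ldots,\lambda_{n})}$. Since $\{p_{n,k}\}_{k=0}^{n}$ is a basis of this space (cf.\ Section \ref{S4}), the two reproduced functions admit unique expansions
\[
e^{\lambda_{j}x}=\sum_{k=0}^{n}c_{j,k}\,p_{n,k}(x),\qquad j=0,1.
\]
Substituting the definition (\ref{defBN}) of $B_{n}$ and matching coefficients against the basis $p_{n,k}$ yields the decoupled system $\alpha_{k}e^{\lambda_{j}t_{k}}=c_{j,k}$, whose unique formal solution is
\[
t_{k}=\frac{1}{\lambda_{1}-\lambda_{0}}\log\frac{c_{1,k}}{c_{0,k}},\qquad \alpha_{k}=c_{0,k}\,e^{-\lambda_{0}t_{k}}.
\]
Both existence and uniqueness then reduce to two claims: (a) every $c_{j,k}$ is strictly positive; and (b) the resulting $t_{0}<t_{1}<\cdots<t_{n}$ fall in $[a,b]$.

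For (a), in the real-eigenvalue case $E_{(\lambda_{0},\ldots,\lambda_{n})}$ is an extended Chebyshev space, so the Bernstein basis may be normalized to be strictly positive on $(a,b)$. Evaluating the expansion at $x=a$ and $x=b$ isolates a single term, by virtue of the vanishing orders characterizing the Bernstein basis, yielding the boundary identities $c_{j,0}=e^{\lambda_{j}a}/p_{n,0}(a)>0$ and $c_{j,n}=e^{\lambda_{j}b}/p_{n,n}(b)>0$; in particular this already pins down $t_{0}=a$ and $t_{n}=b$. For the interior indices $0<k<n$, I would argue by induction on $n$: the factorization (\ref{eqdefL}) of $L$ lets one apply an operator $\frac{d}{dx}-\lambda_{i}$ (with $i\neq 0,1$) to the expansion, mapping $E_{(\lambda_{0},\ldots,\lambda_{n})}$ onto a lower-dimensional extended Chebyshev space whose own Bernstein basis is linearly expressible through the original $p_{n,k}$, thereby propagating positivity from the lower-dimensional induction hypothesis.

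The main obstacle is (b): proving that $\rho_{k}:=c_{1,k}/c_{0,k}$ is strictly increasing in $k$. My approach is a variation-diminishing argument. For each real $\tau$, the function
\[
g_{\tau}(x):=e^{\lambda_{1}x}-\tau e^{\lambda_{0}x}=\sum_{k=0}^{n}(c_{1,k}-\tau c_{0,k})\,p_{n,k}(x)
\]
has at most one real zero, since $e^{(\lambda_{1}-\lambda_{0})x}$ is strictly monotone. On the other hand, the Bernstein basis of an extended Chebyshev space is totally positive and hence variation-diminishing: the number of sign changes of the coefficient sequence $(c_{1,k}-\tau c_{0,k})_{k}$ is bounded above by the number of zeros of $g_{\tau}$ on $(a,b)$. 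As $\tau$ sweeps through the reals, the coefficient sequence must lose its sign changes one index at a time, which forces the $\rho_{k}$ to be pairwise distinct; combined with the endpoint values $\rho_{0}=e^{(\lambda_{1}-\lambda_{0})a}$ and $\rho_{n}=e^{(\lambda_{1}-\lambda_{0})b}$ this gives $a=t_{0}<t_{1}<\cdots<t_{n}=b$. The delicate point, where I expect most of the work to lie, is to run this sign-variation count with enough sharpness both to exclude coincident $\rho_{k}$ and to confirm that every intermediate ratio remains confined to $[e^{(\lambda_{1}-\lambda_{0})a},e^{(\lambda_{1}-\lambda_{0})b}]$.
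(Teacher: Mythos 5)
First, a remark on provenance: the paper states Theorem \ref{Thm1} as a result imported from \cite{AKR} and gives no proof of it here, so there is no in-paper argument to compare against. Your reduction of (\ref{eqnorming}) to the decoupled system $\alpha_{k}e^{\lambda_{j}t_{k}}=c_{j,k}$, the resulting uniqueness statement, and the endpoint identities $c_{j,0}=e^{\lambda_{j}a}$, $c_{j,n}=e^{\lambda_{j}b}/p_{n,n}(b)$ forcing $t_{0}=a$ and $t_{n}=b$ are all correct and are the natural starting point.

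The genuine gap is in your step (b), and it is not a technicality: you invoke the variation-diminishing property in the wrong direction. For a totally positive (normalized Bernstein) basis the inequality runs
\[
Z_{(a,b)}\Bigl(\sum_{k=0}^{n}d_{k}p_{n,k}\Bigr)\;\le\;S^{-}\left(  d_{0},...,d_{n}\right),
\]
i.e.\ the number of zeros of the \emph{function} is bounded by the number of sign changes of its \emph{coefficients}. You need the reverse inequality $S^{-}\bigl((c_{1,k}-\tau c_{0,k})_{k}\bigr)\le Z(g_{\tau})\le1$, and that reverse inequality is false in general: already for the classical quadratic Bernstein basis on $[0,1]$ the strictly positive polynomial $(2x-1)^{2}+\varepsilon$ has coefficient vector $(1+\varepsilon,\,-1+\varepsilon,\,1+\varepsilon)$, with two sign changes and no zeros. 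Hence the fact that $g_{\tau}$ has at most one zero gives no control whatsoever on the sign changes of its coefficient sequence, and the monotonicity of $\rho_{k}=c_{1,k}/c_{0,k}$ --- which is precisely the nontrivial content of the theorem, since it is what places the $t_{k}$ in increasing order inside $[a,b]$ --- is not established. Worse, the coefficient-level statement you would need (that $(c_{1,k}-\tau c_{0,k})_{k}$ has at most one sign change for every $\tau$) is \emph{equivalent} to the monotonicity of $\rho_{k}$, so without an independent proof of it the argument is circular; and even ``pairwise distinct $\rho_{k}$ plus correct endpoint values'' would not by itself yield the ordering $t_{0}<t_{1}<\cdots<t_{n}$. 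A secondary, smaller gap: the induction for positivity of the interior $c_{j,k}$ is only sketched. Applying $\frac{d}{dx}-\lambda_{i}$ together with Proposition \ref{PropABL} produces relations of the form $(\lambda_{j}-\lambda_{i})c_{j,k}^{\prime}=c_{j,k+1}+d^{k}c_{j,k}$ linking the coefficients in the two spaces, and positivity of the lower-order coefficients $c_{j,k}^{\prime}$ does not propagate upward through such a relation without additional sign information on the constants $d^{k}$; this part needs to be carried out in detail before it can be counted as a proof.
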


The positivity of the coefficients $\alpha_{0},...,\alpha_{n}$ is related to
the above-mentioned normalization property. Theorem \ref{Thm1} says that
property (\ref{eqnorming}) can be used for defining knots $t_{0},...,t_{n}$
and weights $\alpha_{0},...,\alpha_{n}$ for an operator of the form
(\ref{defBN}). In the classical polynomial case this means that the Bernstein
operator $B_{n}$ on $[0,1]$ has the property that $B_{n}\left(  1\right)  =1$
and $B_{n}\left(  x\right)  =x$ for the constant function $1$ and the identity function.

It follows from the above construction that the operator $B_{n}$ defined by
(\ref{defBN}) and satisfying (\ref{eqnorming}) is a \emph{positive} operator.
Using a Korovkin-type theorem for extended Chebyshev systems the following
sufficient criterion for the uniform convergence of $B_{n}f$ to $f$, for each
$f\in C\left[  a,b\right]  $ has been given in \cite{AKR}. Here we use the
more precise but lengthy notation $p_{\left(  \lambda_{0},...,\lambda
_{n}\right)  ,k}$ instead of $p_{n,k}$ for $k=0,...,n.$

\begin{theorem}
\label{ThmCon}Let $\lambda_{0},\lambda_{1},\lambda_{2}$ be pairwise distinct
real numbers and let $\Lambda_{n}=\left(  \lambda_{0},\lambda_{1}%
,....,\lambda_{n}\right)  \in\mathbb{R}^{n+1}$ with possibly variable
$\lambda_{j}=\lambda_{j}\left(  n\right)  $ for $j=3,...,n$. For each natural
number $n\geq2,$ and each $k\leq n$, define the numbers $a\left(  n,k\right)
$ and $b\left(  n,k\right)  $ as follows:
\begin{align}
a\left(  n,k\right)   &  :=\lim_{x\rightarrow b}\frac{p_{\left(  \lambda
_{0},\lambda_{2},....,\lambda_{n}\right)  ,k}\left(  x\right)  }{p_{\left(
\lambda_{1},\lambda_{2},....,\lambda_{n}\right)  ,k}\left(  x\right)  },\text{
}\label{assump1}\\
b\left(  n,k\right)   &  :=\lim_{x\rightarrow b}\frac{p_{\left(  \lambda
_{0},\lambda_{1},\lambda_{3},....,\lambda_{n}\right)  ,k}\left(  x\right)
}{p_{\left(  \lambda_{1},\lambda_{2},....,\lambda_{n}\right)  ,k}\left(
x\right)  }. \label{assump2}%
\end{align}
Assume that for $n\longrightarrow\infty$ uniformly in $k$ holds
\begin{equation}
t_{k}\left(  n\right)  -t_{k-1}\left(  n\right)  \longrightarrow0,
\label{tktk1}%
\end{equation}
where $t_{k}=t_{k}\left(  n\right)  $ are the points determined by Theorem
\ref{Thm1}, and for $n\longrightarrow\infty$ uniformly in $k$ holds
\begin{equation}
\frac{\log b\left(  n,k\right)  }{t_{k}-t_{k+1}}\longrightarrow\lambda
_{2}-\lambda_{0}. \label{assumptonb}%
\end{equation}
Then the Bernstein operator $B_{\left(  \lambda_{0},...,\lambda_{n}\right)  }
$ defined in Theorem \ref{Thm1} converges to the identity operator on
$C([a,b],\mathbb{C})$ with the uniform norm.
\end{theorem}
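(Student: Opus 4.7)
My plan is to prove Theorem \ref{ThmCon} by reducing uniform convergence to the test-function case via a Korovkin-type theorem for extended Chebyshev systems. Since $\lambda_0,\lambda_1,\lambda_2$ are pairwise distinct real numbers, the three-dimensional space $E_{(\lambda_0,\lambda_1,\lambda_2)}=\mathrm{span}\{e^{\lambda_0 x},e^{\lambda_1 x},e^{\lambda_2 x}\}$ is an extended Chebyshev system on $[a,b]$. The relevant Korovkin-type result (the one invoked in \cite{AKR}) then says that a sequence of positive linear operators $B_n\colon C[a,b]\to C[a,b]$ converges uniformly to the identity on all of $C[a,b]$ as soon as $B_n(e^{\lambda_j\cdot})\to e^{\lambda_j\cdot}$ uniformly on $[a,b]$ for $j=0,1,2$. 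Since the operator $B_n=B_{(\lambda_0,\ldots,\lambda_n)}$ supplied by Theorem \ref{Thm1} is positive, this reduction is available. After splitting complex-valued $f$ into real and imaginary parts, it suffices to work with real data.

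By the reproduction property (\ref{eqnorming}) in Theorem \ref{Thm1}, the cases $j=0$ and $j=1$ are trivial: $B_n(e^{\lambda_0\cdot})=e^{\lambda_0\cdot}$ and $B_n(e^{\lambda_1\cdot})=e^{\lambda_1\cdot}$ hold exactly for every $n$. The entire content of the theorem therefore lies in showing
\[
\sup_{x\in[a,b]}\bigl|B_n(e^{\lambda_2\cdot})(x)-e^{\lambda_2 x}\bigr|\longrightarrow 0
\]
as $n\to\infty$, under the mesh hypothesis (\ref{tktk1}) and the logarithmic-ratio hypothesis (\ref{assumptonb}).

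To prove this, I would first rewrite the two normalization equations $B_n(e^{\lambda_0\cdot})=e^{\lambda_0\cdot}$ and $B_n(e^{\lambda_1\cdot})=e^{\lambda_1\cdot}$ as a linear system for the weights $\alpha_k$, exploiting the identity $e^{-\lambda_j x}p_{(\lambda_0,\ldots,\lambda_n),k}(x)=p_{(\lambda_0-\lambda_j,\ldots,\lambda_n-\lambda_j),k}(x)$, which is a standard feature of Bernstein-like bases in exponential spaces. Passing to the limit $x\to b$ and using the vanishing orders of the $p_{n,k}$ at $b$, the ratios that govern the asymptotic behaviour of $\alpha_k e^{\lambda_2 t_k}/\alpha_{k+1}e^{\lambda_2 t_{k+1}}$ become exactly the quantities $a(n,k)$ and $b(n,k)$ appearing in (\ref{assump1})--(\ref{assump2}). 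Then (\ref{assumptonb}), rewritten as $\log b(n,k)=(\lambda_2-\lambda_0)(t_k-t_{k+1})+o(t_{k+1}-t_k)$ uniformly in $k$, is precisely what is needed for a telescoping argument to show that $\alpha_k e^{\lambda_2 t_k}$ behaves, up to a factor tending to $1$ uniformly, like the coefficient that would reproduce $e^{\lambda_2\cdot}$ exactly. Combining this with (\ref{tktk1}) (which plays the role of the mesh shrinking to zero in the classical Bernstein case) yields the required uniform convergence.

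The main obstacle will be the asymptotic bookkeeping. The hypothesis (\ref{assumptonb}) is a multiplicative/logarithmic statement about ratios of Bernstein basis functions with different parent eigenvalue vectors; it must be converted into additive, uniform-in-$k$ estimates on $\log\alpha_k+\lambda_2 t_k$, summed by an Abel-type rearrangement, and finally used to bound $B_n(e^{\lambda_2\cdot})(x)-e^{\lambda_2 x}$ uniformly in $x$. Keeping the bounds uniform is delicate because the eigenvalues $\lambda_j=\lambda_j(n)$ for $j\geq 3$ are allowed to vary with $n$, so all implicit constants have to be traced carefully; the two uniformity hypotheses in $k$ are exactly what makes this feasible.
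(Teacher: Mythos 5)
The paper itself does not prove Theorem \ref{ThmCon}: it is quoted from \cite{AKR} as a known sufficient criterion, so there is no in-paper argument to compare yours against. That said, your overall strategy --- positivity of $B_n$ plus a Korovkin-type theorem for the three-dimensional extended Chebyshev system spanned by $e^{\lambda_0 x},e^{\lambda_1 x},e^{\lambda_2 x}$, with the cases $j=0,1$ disposed of exactly by the reproduction property (\ref{eqnorming}), and everything reduced to showing $B_n(e^{\lambda_2\cdot})\to e^{\lambda_2\cdot}$ uniformly --- is precisely the route the paper attributes to \cite{AKR}, and that reduction is sound.

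However, as written the proposal has a genuine gap at the one step that carries all the content. You assert that the ratios governing $\alpha_k e^{\lambda_2 t_k}/(\alpha_{k+1}e^{\lambda_2 t_{k+1}})$ ``become exactly'' the quantities $a(n,k)$ and $b(n,k)$, but you never derive this; the knots and weights of Theorem \ref{Thm1} are determined only implicitly by the two reproduction identities (via limits of ratios of basis functions and the recursion $\alpha_{k+1}=-\alpha_k d_{\Lambda_n}^{k,\lambda_0}$ recorded in Section \ref{S6}), and converting hypothesis (\ref{assumptonb}) into a uniform additive estimate on $\log\alpha_k+\lambda_2 t_k$ requires those identities to be written out and the ``telescoping'' and ``Abel-type'' steps actually performed --- you name them but do not carry them out. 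Two further points: your sketch never uses $a(n,k)$ from (\ref{assump1}) at all, whereas the paper's closing discussion (where $a(n,n)=\Phi_{2s}(b-a)/\Phi_{2s-}(b-a)$ and the limit (\ref{eqphic}) is invoked) makes clear that control of these quantities is part of the mechanism, so a complete proof must exhibit where they enter; and the Korovkin-type theorem for an extended Chebyshev triple is not automatic from positivity alone --- one must produce the standard auxiliary functions (a strictly positive element of the span and, for each point, a nonnegative element vanishing there to second order), so the precise version being invoked should be stated.
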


Theorem \ref{Thm1} applied to the system $\Lambda_{2s+1}$ considered in
Section \ref{S2} shows that there exist unique points $t_{0}<t_{1}<...<t_{n}$
in the interval $\left[  a,b\right]  $ and unique \emph{positive} coefficients
$\alpha_{0},...,\alpha_{n}$ such that the operator $B_{n}:C\left[  a,b\right]
\rightarrow E_{\Lambda_{2s+1}}$ defined by (\ref{defBN}) has the property
\[
B_{\Lambda_{2s+1}}\left(  e^{x}\right)  =e^{x}\text{ and }B_{\Lambda_{2s+1}%
}\left(  e^{-x}\right)  =e^{-x}.
\]
The contributions in this paper may serve to investigate the question whether
the Bernstein operator $B_{\Lambda_{2s+1}}$ converges to the identity operator
for $s\rightarrow\infty.$ Since in this example one has only two different
eigenvalues Theorem \ref{ThmCon} has to be modified. Note that for $k=n$ the
coefficient $a\left(  n,k\right)  $ defined in (\ref{assump1}) is equal to
\[
\lim_{x\rightarrow b}\frac{p_{\left(  \lambda_{0},\lambda_{2},....,\lambda
_{n}\right)  ,n}\left(  x\right)  }{p_{\left(  \lambda_{1},\lambda
_{2},....,\lambda_{n}\right)  ,n}\left(  x\right)  }=\frac{\Phi_{2s}\left(
b-a\right)  }{\Phi_{2s-}\left(  b-a\right)  }%
\]
and in (\ref{eqphic}) we have proved that these numbers converge to $1.$

\section{\label{S1}Taylor expansion of the fundamental function}

We say that the vector $\Lambda_{n}\in\mathbb{C}^{n+1}$is \emph{equivalent} to
the vector $\Lambda_{n}^{\prime}\in\mathbb{C}^{n+1}$ if the corresponding
differential operators are equal (so the spaces of all solutions are equal).
This is the same to say that each $\lambda$ occurs in $\Lambda_{n}$ and
$\Lambda_{n}^{\prime}$ with the same multiplicity. Since the differential
operator $L$ defined in (\ref{eqdefL}) does not depend on the order of
differentiation, it is clear that each permutation of the vector $\Lambda_{n}$
is equivalent to $\Lambda_{n}.$ Hence the space $E_{\left(  \lambda
_{0},...,\lambda_{n}\right)  }$ does not depend on the order of the
eigenvalues $\lambda_{0},...,\lambda_{n}.$

We say that the space $E_{\left(  \lambda_{0},...,\lambda_{n}\right)  }$ is
\emph{closed under complex conjugation}, if for $f\in$ $E_{\left(  \lambda
_{0},...,\lambda_{n}\right)  }$ the complex conjugate function $\overline{f}$
is again in $E_{\left(  \lambda_{0},...,\lambda_{n}\right)  }.$ It is easy to
see that for complex numbers $\lambda_{0},...,\lambda_{n}$ the space
$E_{\left(  \lambda_{0},...,\lambda_{n}\right)  }$ is closed under complex
conjugation if and only if there exists a permutation $\sigma$ of the indices
$\left\{  0,...,n\right\}  $ such that $\overline{\lambda_{j}}=\lambda
_{\sigma\left(  j\right)  }$ for $j=0,...,n.$ In other words, $E_{\left(
\lambda_{0},...,\lambda_{n}\right)  }$ is closed under complex conjugation if
and only if the vector $\Lambda_{n}=\left(  \lambda_{0},...,\lambda
_{n}\right)  $ is equivalent to the conjugate vector $\overline{\Lambda_{n}}$.

In the case of pairwise different $\lambda_{j},j=0,...,n,$ the space
$E_{\left(  \lambda_{0},...,\lambda_{n}\right)  }$ is the linear span
generated by the functions
\[
e^{\lambda_{0}x},e^{\lambda_{1}x},....,e^{\lambda_{n}x}.
\]
In the case when some $\lambda_{j}$ occurs $m_{j}$ times in $\Lambda
_{n}=\left(  \lambda_{0},...,\lambda_{n}\right)  $ a basis of the space
$E_{\left(  \lambda_{0},...,\lambda_{n}\right)  }$ is given by the linearly
independent functions
\[
x^{s}e^{\lambda_{j}x}\qquad\text{for }s=0,1,...,m_{j}-1.
\]
In the case that $\lambda_{0}=...=\lambda_{n}$ the space $E_{\left(
\lambda_{0},...,\lambda_{n}\right)  }$ is just the space of all polynomials of
degree $\leq n,$ and we shall refer to this as the \emph{polynomial case}, and
we shall denote the fundamental function by $\Phi_{\text{pol,}n}\left(
x\right)  $. Obviously $\Phi_{\text{pol,}n}\left(  x\right)  $ is of a very
simple form, namely
\begin{equation}
\Phi_{\text{pol,}n}\left(  x\right)  =\frac{1}{n!}x^{n}, \label{Fundpol}%
\end{equation}
so the Taylor expansion is evident from (\ref{Fundpol}). Moreover one has a
very elegant recursion, namely
\[
\Phi_{\text{pol,}n+1}\left(  x\right)  =\frac{1}{n+1}\cdot x\cdot
\Phi_{\text{pol,}n}\left(  x\right)  .
\]
Generally, recursion formulas for the fundamental function $\Phi_{\Lambda_{n}%
}$ are not known or maybe non-existing, except the case that the eigenvalues
are equidistant, cf. \cite{Li85}. We emphasize that the important (and easy to
prove) formula
\begin{equation}
\left(  \frac{d}{dx}-\lambda_{n+1}\right)  \Phi_{\left(  \lambda
_{0},...,\lambda_{n+1}\right)  }\left(  x\right)  =\Phi_{\left(  \lambda
_{0},...,\lambda_{n}\right)  }\left(  x\right)  \label{rec0}%
\end{equation}
is not a recursion formula from which we may compute $\Phi_{\left(
\lambda_{0},...,\lambda_{n+1}\right)  }\left(  x\right)  $ from $\Phi_{\left(
\lambda_{0},...,\lambda_{n}\right)  }\left(  x\right)  .$

Later we need the Taylor expansion of the fundamental function $\Phi
_{\Lambda_{n}}$ which is probably a folklore result and not difficult to
prove; as definition of the fundamental function we take formula (\ref{defPhi}).

\begin{proposition}
\label{PropTaylor}The function $\Phi_{\Lambda_{n}}$ with $\Lambda_{n}=\left(
\lambda_{0},...,\lambda_{n}\right)  $ satisfies $\Phi_{\left(  \lambda
_{0},...,\lambda_{n}\right)  }^{\left(  k\right)  }\left(  0\right)  =0$ for
$k=0,...,n-1$, and for $k\geq n$ the formula
\[
\Phi_{\left(  \lambda_{0},...,\lambda_{n}\right)  }^{\left(  k\right)
}\left(  0\right)  =\sum_{s_{0}+...+s_{n}+n=k}^{\infty}\lambda_{0}^{s_{0}%
}...\lambda_{n}^{s_{n}}.
\]
holds. In particular, $\Phi_{\left(  \lambda_{0},...,\lambda_{n}\right)
}^{\left(  n\right)  }\left(  0\right)  =1$ and $\Phi_{\left(  \lambda
_{0},...,\lambda_{n}\right)  }^{\left(  n+1\right)  }\left(  0\right)
=\lambda_{0}+...+\lambda_{n},$ and
\begin{equation}
\Phi_{\left(  \lambda_{0},...,\lambda_{n}\right)  }^{\left(  n+2\right)
}\left(  0\right)  =\sum_{s_{0}+...+s_{n}=2}^{\infty}\lambda_{0}^{s_{0}%
}...\lambda_{n}^{s_{n}}. \label{phiN2}%
\end{equation}
\end{proposition}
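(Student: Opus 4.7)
The plan is to start from the contour integral representation
\[
\Phi_{\Lambda_{n}}(x)=\frac{1}{2\pi i}\int_{\Gamma_{r}}\frac{e^{xz}}{(z-\lambda_{0})\cdots(z-\lambda_{n})}\,dz,
\]
given in (\ref{defPhi}), and differentiate under the integral sign. Since the integrand is jointly smooth in $(x,z)$ and the path $\Gamma_{r}$ is compact, we obtain at $x=0$
\[
\Phi_{\Lambda_{n}}^{(k)}(0)=\frac{1}{2\pi i}\int_{\Gamma_{r}}\frac{z^{k}}{(z-\lambda_{0})\cdots(z-\lambda_{n})}\,dz.
\]
This reduces the proposition to computing a single residue at infinity.

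Next, I would fix a radius $r>\max_{j}|\lambda_{j}|$ so that on $\Gamma_{r}$ we have $|\lambda_{j}/z|<1$ uniformly for every $j$. Expanding each factor as a geometric series,
\[
\frac{1}{z-\lambda_{j}}=\frac{1}{z}\sum_{s_{j}=0}^{\infty}\left(\frac{\lambda_{j}}{z}\right)^{s_{j}},
\]
and taking the Cauchy product of the $n+1$ resulting series gives
\[
\prod_{j=0}^{n}\frac{1}{z-\lambda_{j}}=\sum_{s_{0},\dots,s_{n}\geq0}\frac{\lambda_{0}^{s_{0}}\cdots\lambda_{n}^{s_{n}}}{z^{\,n+1+s_{0}+\cdots+s_{n}}}.
\]
Since the convergence is absolute and uniform on $\Gamma_{r}$, I can interchange the integral with the infinite sum; multiplying by $z^{k}$ and using the basic residue evaluation
\[
\frac{1}{2\pi i}\int_{\Gamma_{r}}\frac{dz}{z^{m}}=\delta_{m,1}
\]
only the terms with $n+1+s_{0}+\cdots+s_{n}-k=1$ survive, i.e.\ those satisfying $s_{0}+\cdots+s_{n}+n=k$.

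This immediately yields both claims: for $k\leq n-1$ the exponent $n+1+\sum s_{j}-k\geq n+2-k\geq2$ for every choice of $s_{j}\geq0$, so the integral vanishes; for $k\geq n$ the surviving terms give exactly $\sum_{s_{0}+\cdots+s_{n}=k-n}\lambda_{0}^{s_{0}}\cdots\lambda_{n}^{s_{n}}$. Specializing $k=n,n+1,n+2$ gives the three highlighted identities, in particular (\ref{phiN2}). The only real subtlety is the uniform convergence that justifies swapping sum and integral, which is handled by the choice $r>\max_{j}|\lambda_{j}|$ and a standard Weierstrass $M$-test; the rest is bookkeeping, so I do not anticipate a genuine obstacle.
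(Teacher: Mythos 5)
Your proposal is correct and follows essentially the same route as the paper: expand each factor $1/(z-\lambda_j)$ as a geometric series in $\lambda_j/z$, interchange the sum with the contour integral, and read off the surviving residue; the only (immaterial) difference is that you differentiate under the integral before expanding, while the paper expands first and then differentiates. (A trivial slip: the exponent bound for $k\leq n-1$ should read $n+1+\sum s_{j}-k\geq n+1-k\geq 2$, which still gives the vanishing.)
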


\begin{proof}
Write for $z\in\mathbb{C}$ with $\left|  z\right|  >\left|  \lambda
_{j}\right|  $
\[
\frac{1}{z-\lambda_{j}}=\frac{1}{z}\cdot\frac{1}{1-\frac{\lambda_{j}}{z}}%
=\sum_{s=0}^{\infty}\lambda_{j}^{s}\left(  \frac{1}{z}\right)  ^{s+1}.
\]
Thus we have
\[
\Phi_{\left(  \lambda_{0},...,\lambda_{n}\right)  }\left(  x\right)
=\sum_{s_{0}=0}^{\infty}...\sum_{s_{n}=0}^{\infty}\frac{1}{2\pi i}\int
_{\Gamma_{r}}\lambda_{0}^{s_{0}}...\lambda_{n}^{s_{n}}\frac{e^{xz}}%
{z^{s_{0}+...+s_{n}+n+1}}dz.
\]
By differentiating one obtains
\[
\Phi_{\left(  \lambda_{0},...,\lambda_{n}\right)  }^{\left(  k\right)
}\left(  x\right)  =\sum_{s_{0}=0}^{\infty}...\sum_{s_{n}=0}^{\infty}\frac
{1}{2\pi i}\int_{\Gamma_{r}}\lambda_{0}^{s_{0}}...\lambda_{n}^{s_{n}}%
\frac{z^{k}e^{xz}}{z^{s_{0}+...+s_{n}+n+1}}dz.
\]
For $x=0$ the integral is easy to evaluate and the result is proven.
\end{proof}

Let us specialize the last result:

\begin{proposition}
\label{PropTaylor3}In the case of $\left(  \lambda_{0},...,\lambda_{n}\right)
=\Lambda_{2s+1}\left(  -1,1\right)  $ the following holds
\[
\Phi_{2s+1}^{\left(  2s\right)  }\left(  0\right)  =1\text{ and }\Phi
_{2s+1}^{\left(  2s+1\right)  }\left(  0\right)  =0\text{ and }\Phi
_{2s+1}^{\left(  2s+3\right)  }\left(  0\right)  =s+1
\]
\end{proposition}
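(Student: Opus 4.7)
The plan is to read the three identities off as direct specializations of Proposition \ref{PropTaylor} applied to the vector $\Lambda_n = \Lambda_{2s+1}(-1,1)$. By construction this list contains the eigenvalue $+1$ with multiplicity $s+1$ and the eigenvalue $-1$ with multiplicity $s+1$, so $n = 2s+1$ and in particular $\lambda_i^2 = 1$ for every $i$, while $\sum_{i=0}^{n}\lambda_i = 0$ by the balanced multiplicities.

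For the first two values I would simply invoke the corresponding pieces of Proposition \ref{PropTaylor}: the normalization $\Phi_{\Lambda_n}^{(n)}(0) = 1$ delivers the value at the first relevant derivative order, and $\Phi_{\Lambda_n}^{(n+1)}(0) = \sum_{i=0}^{n}\lambda_i = 0$ delivers the next one.

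The only step with any real computational content is the identity $\Phi^{(n+2)}(0) = s+1$. Starting from formula (\ref{phiN2}), I would partition the multi-indices $(s_0,\dots,s_n)$ with $s_0+\dots+s_n=2$ into two families: those with a single coordinate equal to $2$, whose total contribution is
\[
\sum_{i=0}^{n} \lambda_i^2 \;=\; 2(s+1)
\]
since $\lambda_i^2=1$; and those with two distinct coordinates equal to $1$, whose contribution is $\sum_{i<j}\lambda_i\lambda_j$. For the latter I would use the Newton-type identity
\[
\sum_{i<j}\lambda_i\lambda_j \;=\; \tfrac12\Bigl[\bigl(\textstyle\sum_i \lambda_i\bigr)^{\!2} - \sum_i \lambda_i^2\Bigr] \;=\; \tfrac12\bigl(0 - 2(s+1)\bigr) \;=\; -(s+1),
\]
so that the two contributions add up to $(2s+2) - (s+1) = s+1$, as claimed.

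There is no genuine obstacle in this argument: everything follows from Proposition \ref{PropTaylor}, and the only point requiring care is the combinatorial bookkeeping of the degree-two multi-indices in (\ref{phiN2}). The calculation is then trivial because each $\lambda_i$ is a sign, so every power sum in the $\lambda_i$'s reduces at once to elementary arithmetic in $s$.
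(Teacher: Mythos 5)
Your argument is correct, and for the only substantive identity it takes a genuinely different route from the paper. The paper also reads the first two values off Proposition \ref{PropTaylor}, but for $\Phi^{\left(  n+2\right)  }\left(  0\right)  $ it sets up a recursion by splitting the sum in (\ref{phiN2}) according to the exponent $s_{n}\in\left\{  0,1,2\right\}  $, obtaining
\[
\Phi_{\left(  \lambda_{0},...,\lambda_{n}\right)  }^{\left(  n+2\right)
}\left(  0\right)  =\Phi_{\left(  \lambda_{0},...,\lambda_{n-2}\right)
}^{\left(  n\right)  }\left(  0\right)  +1
\]
for the alternating eigenvalues, and then iterates $s$ times down to $\Phi_{\left(  \lambda_{0}\right)  }^{\left(  1\right)  }\left(  0\right)  =1$. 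You instead recognize the sum in (\ref{phiN2}) as the complete homogeneous symmetric polynomial $h_{2}$, split it into the square terms and the cross terms, and evaluate the latter by the Newton identity $\sum_{i<j}\lambda_{i}\lambda_{j}=\frac{1}{2}\bigl[\bigl(\sum_{i}\lambda_{i}\bigr)^{2}-\sum_{i}\lambda_{i}^{2}\bigr]$. Your computation $2\left(  s+1\right)  -\left(  s+1\right)  =s+1$ is correct and arguably cleaner: it is a closed-form one-step evaluation rather than an induction, and it would generalize immediately to any eigenvalue configuration with known power sums, whereas the paper's iteration exploits the specific $\pm1$ structure. One point worth making explicit: your invocation of Proposition \ref{PropTaylor} with $n=2s+1$ actually yields $\Phi_{2s+1}^{\left(  2s+1\right)  }\left(  0\right)  =1$ and $\Phi_{2s+1}^{\left(  2s+2\right)  }\left(  0\right)  =0$, not the orders $2s$ and $2s+1$ printed in the statement; the derivative orders in the first two identities of the proposition (and in the paper's own proof) are shifted down by one, while the third identity $\Phi_{2s+1}^{\left(  2s+3\right)  }\left(  0\right)  =s+1$ is indexed consistently with $n+2=2s+3$. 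Your version is the internally consistent one, and it is the one actually used later in Corollary \ref{CorRec2}, so this is a typo in the paper rather than a defect in your argument.
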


\begin{proof}
The equation $\Phi_{2s+1}^{\left(  2s\right)  }\left(  0\right)  =1$ is clear;
further $\Phi_{2s+1}^{\left(  2s+1\right)  }\left(  0\right)  =\lambda
_{0}+...+\lambda_{n}=0.$ Next we use formula (\ref{phiN2}): split up the
integral according to the cases $s_{n}=0,1,2:$ then
\[
\Phi_{\left(  \lambda_{0},...,\lambda_{n}\right)  }^{\left(  n+2\right)
}\left(  0\right)  =\Phi_{\left(  \lambda_{0},...,\lambda_{n-1}\right)
}^{\left(  n+1\right)  }\left(  0\right)  +\sum_{j=0}^{n}\lambda_{j}%
\lambda_{n}=\Phi_{\left(  \lambda_{0},...,\lambda_{n-1}\right)  }^{\left(
n+1\right)  }\left(  0\right)  +\lambda_{n}\Phi_{\left(  \lambda
_{0},...,\lambda_{n}\right)  }^{\left(  n+1\right)  }\left(  0\right)  .
\]
Using this formula for $\Phi_{\left(  \lambda_{0},...,\lambda_{n-1}\right)
}^{\left(  n+1\right)  }\left(  0\right)  $ (instead of $\Phi_{\left(
\lambda_{0},...,\lambda_{n}\right)  }^{\left(  n+2\right)  }\left(  0\right)
)$ one obtains
\[
\Phi_{\left(  \lambda_{0},...,\lambda_{n}\right)  }^{\left(  n+2\right)
}\left(  0\right)  =\Phi_{\left(  \lambda_{0},...,\lambda_{n-2}\right)
}^{\left(  n\right)  }\left(  0\right)  +\lambda_{n-1}\Phi_{\left(
\lambda_{0},...,\lambda_{n-1}\right)  }^{\left(  n\right)  }\left(  0\right)
+\lambda_{n}\Phi_{\left(  \lambda_{0},...,\lambda_{n}\right)  }^{\left(
n+1\right)  }\left(  0\right)  .
\]
Applied to $\Lambda_{n}=\Lambda_{2s+1}\left(  -1,1\right)  $ and $2s+1=n$ we
see that $\Phi_{\left(  \lambda_{0},...,\lambda_{n}\right)  }^{\left(
n+1\right)  }\left(  0\right)  =0$ and $\Phi_{\left(  \lambda_{0}%
,...,\lambda_{n-1}\right)  }^{\left(  n\right)  }\left(  0\right)
=\lambda_{n-1},$ so we have
\[
\Phi_{\left(  \lambda_{0},...,\lambda_{n}\right)  }^{\left(  n+2\right)
}\left(  0\right)  =\Phi_{\left(  \lambda_{0},...,\lambda_{n-2}\right)
}^{\left(  n\right)  }\left(  0\right)  +1.
\]
By iterating one has $\Phi_{\left(  \lambda_{0},...,\lambda_{n}\right)
}^{\left(  n+2\right)  }\left(  0\right)  =\Phi_{\left(  \lambda
_{0},...,\lambda_{n-2j}\right)  }^{\left(  n-2j\right)  }\left(  0\right)
+j.$ Since $n=2s+1$ is odd we can put $j=s$ and obtain
\[
\Phi_{2s+1}^{\left(  2s+3\right)  }\left(  0\right)  =s+\Phi_{\left(
\lambda_{0}\right)  }^{\left(  1\right)  }\left(  0\right)  =s+1
\]
since $\Phi_{\left(  \lambda_{0}\right)  }\left(  x\right)  =e^{\lambda_{0}x}$
and $\lambda_{0}=1.$
\end{proof}

We finish the section by recalling two standard facts:

\begin{proposition}
The function $\Phi_{\Lambda_{n}}$ is real-valued if $\left(  \overline
{\lambda_{0}},...,\overline{\lambda_{n}}\right)  $ is equivalent to $\left(
\lambda_{0},...,\lambda_{n}\right)  .$
\end{proposition}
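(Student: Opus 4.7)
The plan is to combine two simple observations: the equivalence condition guarantees that the differential operator $L$ has real coefficients, and the fundamental function is pinned down by a purely real initial-value problem.

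First I would show that the polynomial $P(z) := (z-\lambda_0)\cdots(z-\lambda_n)$ has real coefficients. By hypothesis there is a permutation $\sigma$ with $\overline{\lambda_j} = \lambda_{\sigma(j)}$, hence
\[
\overline{P(z)} = \prod_{j=0}^{n}(\overline z - \overline{\lambda_j}) = \prod_{j=0}^{n}(\overline z - \lambda_{\sigma(j)}) = P(\overline z),
\]
which means all coefficients of $P$ equal their conjugates. Consequently the operator $L = P(d/dx)$ defined in \eqref{eqdefL} has real coefficients.

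Second, I would exploit the defining properties of $\Phi_{\Lambda_n}$ recalled from \cite{Micc76}: it is the \emph{unique} solution to $L f = 0$ with $f(0) = f'(0) = \cdots = f^{(n-1)}(0) = 0$ and $f^{(n)}(0) = 1$. Set $g(x) := \overline{\Phi_{\Lambda_n}(x)}$ for $x \in \mathbb{R}$. Since $L$ has real coefficients, $L g = \overline{L \Phi_{\Lambda_n}} = 0$, so $g \in E_{(\lambda_0,\ldots,\lambda_n)}$. Moreover $g^{(k)}(0) = \overline{\Phi_{\Lambda_n}^{(k)}(0)}$ equals $0$ for $k = 0,\dots,n-1$ and $1$ for $k = n$, because these real numbers are fixed by conjugation. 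By the uniqueness part of the characterization, $g = \Phi_{\Lambda_n}$, so $\Phi_{\Lambda_n}$ is real-valued on $\mathbb{R}$.

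There is no genuine obstacle here; the only subtlety is that one must use an ingredient already stated in the paper (either the uniqueness characterization of $\Phi_{\Lambda_n}$, or alternatively a change-of-variable $w = \overline z$ in the contour integral \eqref{defPhi}, which sends $\Gamma_r$ to itself with reversed orientation and converts $\Phi_{\Lambda_n}$ into $\Phi_{\overline{\Lambda_n}} = \Phi_{\Lambda_n}$). Both routes are short; I prefer the uniqueness-based argument since it avoids orientation bookkeeping.
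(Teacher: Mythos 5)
Your argument is correct. Note that the paper itself offers no proof of this proposition: it is introduced with the phrase ``We finish the section by recalling two standard facts,'' and the single proof block that follows belongs to the second proposition (positivity of $\Phi_{\Lambda_n}$ for real eigenvalues). So there is no in-paper argument to compare against; your proof fills a gap the authors left as folklore. Both of your ingredients are legitimately available from the paper: the equivalence of $\overline{\Lambda_n}$ with $\Lambda_n$ is defined precisely as equality of the associated differential operators, which immediately gives that $L$ has real coefficients (your computation with the permutation $\sigma$ makes this explicit), and the uniqueness of the solution of $Lf=0$ with $f(0)=\cdots=f^{(n-1)}(0)=0$, $f^{(n)}(0)=1$ is stated in the introduction as the defining property of $\Phi_{\Lambda_n}$. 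The conjugation-of-the-contour alternative you sketch would also work and is arguably closer in spirit to the paper's preferred definition (\ref{defPhi}), but the uniqueness route is cleaner and equally faithful to the paper's framework.
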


\begin{proposition}
If $\lambda_{0},...,\lambda_{n}$ are real then $\Phi_{\Lambda_{n}}\left(
x\right)  >0$ for all $x>0.$
\end{proposition}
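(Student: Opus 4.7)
The plan is to proceed by induction on $n$, using the recursion formula (\ref{rec0}) as a first-order linear ODE whose explicit solution manifestly preserves positivity.

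For the base case $n=0$, the defining conditions $\Phi_{(\lambda_0)}(0)=1$ and $\Phi_{(\lambda_0)}\in E_{(\lambda_0)}$ force $\Phi_{(\lambda_0)}(x)=e^{\lambda_0 x}$, which is positive on all of $\mathbb{R}$. For the inductive step, suppose $\Phi_{(\lambda_0,\ldots,\lambda_n)}(x)>0$ for every $x>0$, and write $g(x):=\Phi_{(\lambda_0,\ldots,\lambda_{n+1})}(x)$ and $h(x):=\Phi_{(\lambda_0,\ldots,\lambda_n)}(x)$. By (\ref{rec0}) we have
\[
g'(x)-\lambda_{n+1}g(x)=h(x),
\]
and by the definition of the fundamental function $g(0)=0$ (indeed all derivatives of $g$ up to order $n$ vanish at $0$).

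The integrating factor $e^{-\lambda_{n+1}x}$ then yields
\[
\bigl(e^{-\lambda_{n+1}x}g(x)\bigr)'=e^{-\lambda_{n+1}x}h(x),
\]
so integrating from $0$ to $x$ and using $g(0)=0$ gives the explicit representation
\[
g(x)=e^{\lambda_{n+1}x}\int_{0}^{x}e^{-\lambda_{n+1}t}h(t)\,dt.
\]
For $x>0$, the factor $e^{\lambda_{n+1}x}$ is positive, and since $\lambda_{n+1}\in\mathbb{R}$ the weight $e^{-\lambda_{n+1}t}$ is positive for all $t$; by the inductive hypothesis $h(t)>0$ on $(0,x]$, hence the integral is strictly positive and $g(x)>0$.

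There is essentially no obstacle here: the real hypothesis is used only to ensure that $e^{-\lambda_{n+1}t}$ remains real and positive, and the reality of the lower-order eigenvalues guarantees via the inductive hypothesis that $h$ is positive. The only mildly delicate point is confirming that $g(0)=0$ in order to drop the boundary term when integrating; this is immediate from the general property $\Phi_{\Lambda_{n+1}}^{(k)}(0)=0$ for $k=0,\ldots,n$ recalled just before Proposition \ref{PropTaylor}. Since the argument makes no use of the ordering of the $\lambda_j$'s—consistent with the remark that permutations yield equivalent vectors—the induction can be carried out by peeling off the eigenvalues in any order.
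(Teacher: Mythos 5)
Your proof is correct, and it takes a genuinely different route from the paper. The paper's argument is a zero-counting one: since the $\lambda_{j}$ are real, $E_{\left(\lambda_{0},...,\lambda_{n}\right)}$ is a Chebyshev space over $\mathbb{R}$, so $\Phi_{\Lambda_{n}}$ has at most $n$ zeros; all $n$ of them sit at the origin, hence there are none on $\left(0,\infty\right)$, and the normalization $\Phi_{\Lambda_{n}}^{\left(n\right)}\left(0\right)=1$ fixes the sign. You instead induct on $n$ and solve the first-order relation (\ref{rec0}) by variation of constants, obtaining the explicit representation $\Phi_{\Lambda_{n+1}}\left(x\right)=e^{\lambda_{n+1}x}\int_{0}^{x}e^{-\lambda_{n+1}t}\,\Phi_{\Lambda_{n}}\left(t\right)dt$, from which strict positivity is immediate since the integrand is continuous and positive on $\left(0,x\right)$. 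Your argument is more elementary and self-contained: it does not invoke the (classical but nontrivial) fact that exponential spaces with real frequencies form a Chebyshev system over all of $\mathbb{R}$, needing only (\ref{rec0}), the vanishing $g\left(0\right)=0$, and uniqueness for linear first-order ODEs; as a bonus it exhibits $\Phi_{\Lambda_{n}}$ as an iterated positive weighted integral (essentially the P\'{o}lya-type representation), which is of independent interest. What the paper's approach buys in exchange is brevity and generality: the same Chebyshev argument controls the zeros of every element of $E_{\left(\lambda_{0},...,\lambda_{n}\right)}$, not just the fundamental function, and that machinery is needed elsewhere in the paper anyway. One small point worth making explicit in your write-up: the inductive hypothesis must include that $\Phi_{\Lambda_{n}}$ is real-valued (so that ``$>0$'' makes sense), which is automatic here since all $\lambda_{j}$ are real, but deserves a word.
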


\begin{proof}
Since $\lambda_{0},...,\lambda_{n}$ are real the space $E_{\left(  \lambda
_{0},...,\lambda_{n}\right)  }$ is a Chebyshev space over $\mathbb{R}$, so
$\Phi_{\Lambda_{n}}$ has at most $n$ zeros in $\mathbb{R}.$ Since
$\Phi_{\Lambda_{n}}$ has exactly $n$ zeros in $0,$ it has no other zeros. By
the norming condition $\Phi_{\Lambda_{n}}^{\left(  n\right)  }\left(
0\right)  =1$ it follows that $\Phi_{\Lambda_{n}}\left(  x\right)  >0$ for
small $x>0,$ hence $\Phi_{\Lambda_{n}}\left(  x\right)  >0$ for all $x>0.$
\end{proof}

The following result is a simple consequence of the definition of
$\Phi_{\Lambda_{n}}$

\begin{proposition}
\label{PropSub}If $\Lambda_{n}=\left(  \lambda_{0},...,\lambda_{n}\right)  $
and $c+\Lambda_{n}:=\left(  c+\lambda_{0},...,c+\lambda_{n}\right)  $ for some
$c\in\mathbb{C}$ then
\[
\Phi_{c+\Lambda_{n}}\left(  x\right)  =e^{cx}\Phi_{\Lambda_{n}}\left(
x\right)  .
\]
If $c\Lambda_{n}=\left(  c\lambda_{0},...,c\lambda_{n}\right)  $ for $c\neq0$
then
\[
\Phi_{c\Lambda_{n}}\left(  x\right)  =\frac{1}{c^{n}}\Phi_{\Lambda_{n}}\left(
cx\right)
\]
\end{proposition}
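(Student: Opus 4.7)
The plan is to rely on the uniqueness characterization of the fundamental function: as noted earlier in the excerpt, $\Phi_{\Lambda_n}$ is the unique element of $E_{\Lambda_n}$ satisfying $\Phi_{\Lambda_n}^{(k)}(0)=0$ for $k=0,\dots,n-1$ and $\Phi_{\Lambda_n}^{(n)}(0)=1$. So in both cases I would exhibit a function, check it is annihilated by the correct differential operator, and verify the norming conditions at the origin.

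For the translation statement, I would verify the conjugation identity
\[
\left(\frac{d}{dx}-(c+\lambda)\right)\bigl(e^{cx}g(x)\bigr)=e^{cx}\left(\frac{d}{dx}-\lambda\right)g(x),
\]
which is just the product rule. Iterating this identity over $\lambda_0,\dots,\lambda_n$ gives $L(c+\Lambda_n)\bigl(e^{cx}\Phi_{\Lambda_n}\bigr)=e^{cx}L(\Lambda_n)\Phi_{\Lambda_n}=0$, so $e^{cx}\Phi_{\Lambda_n}$ lies in $E_{c+\Lambda_n}$. For the norming conditions, Proposition~\ref{PropTaylor} tells us that $\Phi_{\Lambda_n}(x)=\tfrac{1}{n!}x^n+O(x^{n+1})$ near $0$; since $e^{cx}=1+O(x)$, the product has the same leading Taylor behavior, so the first $n-1$ derivatives vanish at $0$ and the $n$-th equals $1$. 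By uniqueness the identity $\Phi_{c+\Lambda_n}(x)=e^{cx}\Phi_{\Lambda_n}(x)$ follows.

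For the dilation statement, the analogous chain-rule identity is
\[
\left(\frac{d}{dx}-c\lambda\right)\bigl(g(cx)\bigr)=c\cdot\!\left(\!\left(\frac{d}{dx}-\lambda\right)\!g\right)\!(cx),
\]
so iterating gives $L(c\Lambda_n)\bigl(\Phi_{\Lambda_n}(c\,\cdot\,)\bigr)(x)=c^{n+1}\bigl(L(\Lambda_n)\Phi_{\Lambda_n}\bigr)(cx)=0$, which survives the division by $c^n$. Using the Taylor expansion again, $\Phi_{\Lambda_n}(cx)=\tfrac{(cx)^n}{n!}+O(x^{n+1})$, so $\tfrac{1}{c^n}\Phi_{\Lambda_n}(cx)$ has the required norming behavior at $0$, and uniqueness gives $\Phi_{c\Lambda_n}(x)=\tfrac{1}{c^n}\Phi_{\Lambda_n}(cx)$.

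There is no real obstacle here; the result is essentially a bookkeeping exercise. One minor thing to be careful about is that in the dilation step for complex $c$ one cannot directly invoke the contour integral representation~(\ref{defPhi}) without worrying about the image of $\Gamma_r$ under $z\mapsto cz$, which is why the uniqueness route via the product-rule identity is preferable; it works uniformly for every nonzero $c\in\mathbb{C}$.
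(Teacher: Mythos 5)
Your proof is correct. Note, however, that the paper offers no written proof at all: it simply labels the proposition ``a simple consequence of the definition of $\Phi_{\Lambda_{n}}$,'' where the intended definition is the contour integral (\ref{defPhi}). The implied argument is a one-line change of variables in that integral: substituting $w=z-c$ turns $\int_{\Gamma_r} e^{xz}\prod_j(z-c-\lambda_j)^{-1}\,dz$ into $e^{cx}\int e^{xw}\prod_j(w-\lambda_j)^{-1}\,dw$, and substituting $w=z/c$ produces the factor $c\cdot c^{-(n+1)}=c^{-n}$ together with the argument $cx$. Your worry about the image of $\Gamma_r$ under $z\mapsto cz$ for complex $c$ is not a real obstruction -- the image is a circle of radius $|c|r$ winding once around the origin and enclosing all the $c\lambda_j$, so Cauchy's theorem lets one deform it back to a standard circle -- but your alternative route via the uniqueness characterization (conjugation identities for the first-order factors plus the Taylor normalization from Proposition \ref{PropTaylor}) is equally valid, avoids the contour bookkeeping entirely, and has the mild advantage of working directly from the ODE characterization of $E_{\Lambda_n}$ rather than from the particular integral representation. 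Both arguments are complete; yours is slightly longer but arguably more robust.
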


\section{\label{S2}The fundamental function for $\Lambda_{2s+1}\left(
\lambda,\mu\right)  $}

Let $\Gamma_{r}\left(  t\right)  =re^{it}$ for $t\in\left[  0,2\pi\right]  $
and fixed $r>0.$ Let $\lambda$ and $\mu$ be two real numbers and assume that
$r>0$ be so large that $\lambda$ and $\mu$ are contained in the open ball of
radius $r$ and center $0.$ In the following we want to compute and analyze the
fundamental function
\[
\Phi_{\Lambda_{s}\left(  \lambda,\mu\right)  }\left(  x\right)  =\frac{1}{2\pi
i}\int_{\Gamma_{r}}\frac{e^{xz}}{\left(  z-\lambda\right)  ^{s+1}\left(
z-\mu\right)  ^{s+1}}dz.
\]
By Proposition \ref{PropSub} it is sufficient to consider the case $\lambda=1$
and $\mu=-1,$ so we define
\[
\Phi_{2s+1}\left(  x\right)  :=\frac{1}{2\pi i}\int_{\Gamma_{r}}\frac{e^{xz}%
}{\left(  z-1\right)  ^{s+1}\left(  z+1\right)  ^{s+1}}dz
\]
This integral can be evaluated by the residue theorem, giving two summands
according to the poles $-1$ and $1.$ By a simple substitution argument we see
that
\[
\Phi_{2s+1}\left(  x\right)  =\frac{1}{2\pi i}\int_{\Gamma_{1}}\frac
{e^{x\left(  z+1\right)  }}{z^{s+1}\left(  z+2\right)  ^{s+1}}dz+\frac{1}{2\pi
i}\int_{\Gamma_{1}}\frac{e^{x\left(  z-1\right)  }}{z^{s+1}\left(  z-2\right)
^{s+1}}dz
\]
where $\Gamma_{1}\left(  t\right)  =e^{it}$ for $t\in\left[  0,2\pi\right]  .$
We substitute in the second integral the variable $z$ by $-z$ and obtain
\[
\Phi_{2s+1}\left(  x\right)  =\frac{e^{x}}{2\pi i}\int_{\Gamma_{1}}\frac
{e^{x}}{z^{s+1}\left(  z+2\right)  ^{s+1}}dz-\frac{e^{-x}}{2\pi i}\int
_{\Gamma_{1}}\frac{e^{-xz}}{z^{s+1}\left(  z+2\right)  ^{s+1}}dz.
\]
Now for an integer $\alpha$ we define the polynomials
\begin{equation}
P_{s}^{\alpha}\left(  x\right)  =\frac{1}{2\pi i}\int_{\Gamma_{1}}\frac
{e^{xz}}{z^{s+1}\left(  z+2\right)  ^{s+1+\alpha}}dz \label{Psalfa}%
\end{equation}
and we see that
\begin{equation}
\Phi_{2s+1}\left(  x\right)  =e^{x}P_{s}^{0}\left(  x\right)  -e^{-x}P_{s}%
^{0}\left(  -x\right)  . \label{idphi1}%
\end{equation}
Using equality $\cosh x=\frac{1}{2}\left(  e^{x}+e^{-x}\right)  $ and
$\sinh\left(  x\right)  =\frac{1}{2}\left(  e^{x}-e^{-x}\right)  $ it is easy
to see that
\[
\Phi_{2s+1}\left(  x\right)  =\cosh\left(  x\right)  \left[  P_{s}^{0}\left(
x\right)  -P_{s}^{0}\left(  -x\right)  \right]  +\sinh\left(  x\right)
\left[  P_{s}^{0}\left(  x\right)  +P_{s}^{0}\left(  -x\right)  \right]  .
\]
For example, it is easy to see that
\[
\Phi_{1}\left(  x\right)  =\sinh x\text{ and }\Phi_{3}\left(  x\right)
=\frac{1}{2}\left(  x\cosh x-\sinh x\right)  .
\]
A similar consideration shows that for the function $\Phi_{2s}\left(
x\right)  $ defined as%
\[
\Phi_{2s}\left(  x\right)  :=\frac{1}{2\pi i}\int_{\Gamma_{r}}\frac{e^{xz}%
}{\left(  z-1\right)  ^{s+1}\left(  z+1\right)  ^{s}}dz
\]
we have
\begin{align*}
\Phi_{2s}\left(  x\right)   &  =\frac{e^{x}}{2\pi i}\int_{\Gamma_{1}}%
\frac{e^{xz}}{z^{s+1}\left(  z+2\right)  ^{s}}dz+\frac{e^{-x}}{2\pi i}%
\int_{\Gamma_{1}}\frac{e^{xz}}{z^{s}\left(  z-2\right)  ^{s+1}}dz\\
&  =\frac{e^{x}}{2\pi i}\int_{\Gamma_{1}}\frac{e^{xz}}{z^{s+1}\left(
z+2\right)  ^{s}}dz+\frac{e^{-x}}{2\pi i}\int_{\Gamma_{1}}\frac{e^{-xz}}%
{z^{s}\left(  z+2\right)  ^{s+1}}dz\\
&  =e^{x}P_{s}^{-1}\left(  x\right)  +e^{-x}P_{s-1}^{1}\left(  -x\right)  .
\end{align*}
Here for the case $s=0$ we use the convention $P_{-1}^{\alpha}\left(
x\right)  :=0.$ The following is straightforward:

\begin{lemma}
The functions $P_{s}^{0}\left(  x\right)  $ are polynomials in the variable
$x$ of degree $s$ given by the formula
\[
P_{s}^{0}\left(  x\right)  =\left(  -1\right)  ^{s}\frac{1}{2^{2s+1}}%
\sum_{k=0}^{s}\frac{1}{k!\left(  s-k\right)  !}\frac{\left(  2s-k\right)
!}{s!}\left(  -2x\right)  ^{k}%
\]
\end{lemma}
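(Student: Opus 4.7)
The integral in (\ref{Psalfa}) with $\alpha=0$ has its integrand holomorphic inside $\Gamma_{1}$ except for the pole at $z=0$ of order $s+1$; the other pole $z=-2$ lies outside the unit disk. Therefore the plan is to evaluate $P_{s}^{0}(x)$ by the residue theorem as
\[
P_{s}^{0}\left(  x\right)  =\frac{1}{s!}\left.  \frac{d^{s}}{dz^{s}}\left[
\frac{e^{xz}}{\left(  z+2\right)  ^{s+1}}\right]  \right|  _{z=0}.
\]
This makes it obvious that $P_{s}^{0}$ is a polynomial in $x$ of degree at most $s$, and the coefficient of $x^{s}$ (coming from $s$ derivatives hitting $e^{xz}$) will be nonzero, giving degree exactly $s$.

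Next I would apply the Leibniz rule to the $s$-th derivative of the product $e^{xz}\cdot(z+2)^{-(s+1)}$:
\[
\frac{d^{s}}{dz^{s}}\left[  e^{xz}\left(  z+2\right)  ^{-(s+1)}\right]
=\sum_{k=0}^{s}\binom{s}{k}\frac{d^{k}}{dz^{k}}e^{xz}\cdot\frac{d^{s-k}
}{dz^{s-k}}\left(  z+2\right)  ^{-(s+1)}.
\]
For each factor I would compute: $\frac{d^{k}}{dz^{k}}e^{xz}=x^{k}e^{xz}$, and the repeated differentiation of the negative power yields
\[
\frac{d^{s-k}}{dz^{s-k}}\left(  z+2\right)  ^{-(s+1)}=(-1)^{s-k}\frac
{(2s-k)!}{s!}\left(  z+2\right)  ^{-(2s-k+1)}.
\]
Substituting $z=0$ replaces $e^{xz}$ by $1$ and $(z+2)^{-(2s-k+1)}$ by $2^{-(2s-k+1)}$.

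Assembling the pieces:
\[
P_{s}^{0}\left(  x\right)  =\frac{1}{s!}\sum_{k=0}^{s}\binom{s}{k}x^{k}
(-1)^{s-k}\frac{(2s-k)!}{s!}\frac{1}{2^{2s-k+1}},
\]
and it remains only to regroup the constants. I would pull out $(-1)^{s}/2^{2s+1}$, combine $\binom{s}{k}/s!=1/(k!(s-k)!)$, and rewrite the remaining factor $(-1)^{-k}\,2^{k}\,x^{k}$ as $(-2x)^{k}$, producing exactly the stated formula. No step here is a real obstacle; the only thing to be careful about is the bookkeeping of signs and powers of $2$, and verifying that the pole at $z=-2$ is indeed outside $\Gamma_{1}$ so that no second residue contributes.
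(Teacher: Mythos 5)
Your proposal is correct and follows essentially the same route as the paper: evaluate the contour integral by the residue at $z=0$, apply the Leibniz rule to $e^{xz}(z+2)^{-s-1}$, and collect constants; the only cosmetic difference is that you place the $k$ derivatives on $e^{xz}$ directly, whereas the paper places them on $(z+2)^{-s-1}$ and then reverses the summation index, which is the same computation after relabeling.
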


\begin{proof}
From residue theory it is known that
\[
P_{s}^{0}\left(  x\right)  =\frac{1}{s!}\left[  \frac{d^{s}}{dz^{s}}\left[
e^{xz}\left(  z+2\right)  ^{-s-1}\right]  \right]  _{z=0}.
\]
The rule of Leibniz gives
\[
\frac{d^{s}}{dz^{s}}\left[  e^{xz}\left(  z+2\right)  ^{-s-1}\right]
=\sum_{k=0}^{s}\binom{s}{k}\frac{d^{k}}{dz^{k}}\left(  z+2\right)
^{-s-1}\cdot\frac{d^{s-k}}{dz^{s-k}}e^{xz}.
\]
Note that $\frac{d^{k}}{dz^{k}}\left(  z+2\right)  ^{-s-1}=\left(
-s-1\right)  ...\left(  -s-k\right)  \left(  z+2\right)  ^{-s-1-k},$ and
clearly $\frac{d^{s-k}}{dz^{s-k}}e^{xz}=e^{xz}x^{s-k},$ so
\[
\frac{d^{s}}{dz^{s}}\left[  e^{xz}\left(  z+2\right)  ^{-s-1}\right]
=e^{xz}\sum_{k=0}^{s}\binom{s}{k}\left(  -1\right)  ^{k}\frac{\left(
s+k\right)  !}{s!}x^{s-k}\left(  z+2\right)  ^{-s-1-k}.
\]
Reverse now the summation, and we arrive at
\[
\frac{d^{s}}{dz^{s}}\left[  e^{xz}\left(  z+2\right)  ^{-s-1}\right]
=e^{xz}\sum_{k=0}^{s}\binom{s}{k}\left(  -1\right)  ^{s-k}\frac{\left(
2s-k\right)  !}{s!}x^{k}\left(  z+2\right)  ^{-s-1-\left(  s-k\right)  }.
\]
Take now $z=0$.
\end{proof}

The following is a short list of the first four polynomials for $\alpha=0:$
\begin{align*}
P_{0}^{0}\left(  x\right)   &  =\frac{1}{2}\\
P_{1}^{0}\left(  x\right)   &  =\frac{1}{4}\left(  x-1\right) \\
P_{2}^{0}\left(  x\right)   &  =\frac{1}{2^{5}}\left(  2x^{2}-6x+6\right) \\
P_{3}^{0}\left(  x\right)   &  =\frac{1}{2^{6}}\left(  -20+20x-8x^{2}+\frac
{4}{3}x^{3}\right) \\
P_{4}^{0}\left(  x\right)   &  =\frac{1}{2^{9}}\left(  70-70x+30x^{2}%
-\frac{20}{3}x^{3}+\frac{2}{3}x^{4}\right)
\end{align*}
The polynomial $P_{2}^{0}\left(  x\right)  $ is strictly positive on the real
line, hence the polynomials $P_{s}^{0}\left(  x\right)  $ are not orthogonal
polynomials with respect to any measure on the real line. However the
following is true:

\begin{theorem}
The polynomials $P_{s}^{0}$ satisfy the following recurrence relation:
\begin{equation}
4s\left(  s+1\right)  P_{s+1}^{0}\left(  x\right)  =x^{2}P_{s-1}^{0}\left(
x\right)  -2s\left(  2s+1\right)  P_{s}^{0}\left(  x\right)  . \label{rec2}%
\end{equation}
\end{theorem}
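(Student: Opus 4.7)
The plan is to prove the recurrence by manipulating the contour integral defining $P_s^0$, using $x^2 e^{xz}=\frac{d^2}{dz^2}e^{xz}$ together with integration by parts on the closed contour $\Gamma_1$ (which kills all boundary terms). Concretely, I would start from
\[
x^2 P_{s-1}^{0}(x)=\frac{1}{2\pi i}\int_{\Gamma_1}\frac{1}{z^{s}(z+2)^{s}}\,\frac{d^{2}}{dz^{2}}e^{xz}\,dz
\]
and integrate by parts twice, so that the two derivatives fall on $1/[z^{s}(z+2)^{s}]$.

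The computational heart of the argument is evaluating $\frac{d^{2}}{dz^{2}}\bigl[z^{-s}(z+2)^{-s}\bigr]$. First
\[
\frac{d}{dz}\frac{1}{z^{s}(z+2)^{s}}=\frac{-2s(z+1)}{z^{s+1}(z+2)^{s+1}},
\]
and differentiating once more with the Leibniz rule gives
\[
\frac{d^{2}}{dz^{2}}\frac{1}{z^{s}(z+2)^{s}}=\frac{-2s}{z^{s+1}(z+2)^{s+1}}+\frac{4s(s+1)(z+1)^{2}}{z^{s+2}(z+2)^{s+2}}.
\]
The key algebraic observation is the identity $(z+1)^{2}=z(z+2)+1$, which lets me split the second term as
\[
\frac{(z+1)^{2}}{z^{s+2}(z+2)^{s+2}}=\frac{1}{z^{s+1}(z+2)^{s+1}}+\frac{1}{z^{s+2}(z+2)^{s+2}}.
\]
Collecting the coefficients of $[z^{s+1}(z+2)^{s+1}]^{-1}$ yields $-2s+4s(s+1)=2s(2s+1)$, while the $[z^{s+2}(z+2)^{s+2}]^{-1}$ term comes with coefficient $4s(s+1)$.

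Inserting this back and recognizing the two integrands as the defining integrals of $P_s^0(x)$ and $P_{s+1}^0(x)$ (see \eqref{Psalfa}), I obtain
\[
x^{2}P_{s-1}^{0}(x)=2s(2s+1)\,P_{s}^{0}(x)+4s(s+1)\,P_{s+1}^{0}(x),
\]
which is exactly \eqref{rec2} after rearrangement. There is no genuine obstacle: the boundary terms in the integration by parts vanish because $\Gamma_1$ is closed, and the only nontrivial step is the algebraic collapse $(z+1)^2=z(z+2)+1$, which is what makes the three consecutive values $s-1,s,s+1$ line up. An alternative, strictly routine, verification is possible by matching coefficients of $x^k$ using the explicit formula for $P_s^{0}(x)$ given in the preceding lemma, but the contour-integration route is both shorter and more illuminating.
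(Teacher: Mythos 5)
Your argument is correct, and every step checks out: writing $x^{2}e^{xz}=\frac{d^{2}}{dz^{2}}e^{xz}$, integrating by parts twice (legitimate because $z^{-s}(z+2)^{-s}$ is holomorphic in a neighbourhood of $\Gamma_{1}$ --- the pole at $-2$ lies outside the unit circle --- so $\int_{\Gamma_{1}}F^{\prime}(z)\,dz=0$ for the relevant products $F$), the formula $\frac{d^{2}}{dz^{2}}\bigl[z^{-s}(z+2)^{-s}\bigr]=-2s\,z^{-s-1}(z+2)^{-s-1}+4s(s+1)(z+1)^{2}z^{-s-2}(z+2)^{-s-2}$, and the collapse via $(z+1)^{2}=z(z+2)+1$, which produces exactly the coefficients $-2s+4s(s+1)=2s(2s+1)$ and $4s(s+1)$ in front of the integrals defining $P_{s}^{0}$ and $P_{s+1}^{0}$. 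It is worth noting that the paper gives no proof of this theorem at all: the authors remark only that it ``can be derived by a direct but somewhat tedious calculation'' and explicitly omit it, preferring to obtain the companion recursion (\ref{rec3}) for $\Phi_{2s+1}$ indirectly, as Corollary \ref{CorRec2} of the Bernstein-basis recurrence in Theorem \ref{ThmR2}. So your contour-integral route is genuinely different from anything carried out in the paper: it is short, self-contained, uses only the definition (\ref{Psalfa}), and avoids both the tedious coefficient matching against the explicit formula of the Lemma and the detour through the machinery of Section \ref{S5}; what the paper's route buys in exchange is generality, since Theorem \ref{ThmR2} is a recurrence for the entire Bernstein basis of which the fundamental-function recursion is the single case $k=2s-3$. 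Two small remarks: the identity is vacuous for $s=0$ (both sides vanish, consistent with the convention $P_{-1}^{0}=0$), so your proof should be read for $s\geq1$; and if you test (\ref{rec2}) against the paper's table of polynomials you will find an apparent failure at $s=2$, which is due to a typo there --- the prefactor of $P_{3}^{0}$ should be $1/2^{7}$, not $1/2^{6}$, as both your recurrence and the explicit formula of the Lemma confirm.
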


This can be derived by a direct but somewhat tedious calculation. For the
fundamental function $\Phi_{2s+1}=e^{x}P_{s}^{0}\left(  x\right)  -e^{-x}%
P_{s}^{0}\left(  -x\right)  $ we obtain by a straightforward calculation the
recurrence relation (\ref{rec3}) below. Since we shall derive this recurrence
relation from Theorem \ref{ThmR2} in Section \ref{S4} by a different method,
we omit the proof of (\ref{rec2}),

\begin{corollary}
\label{CorFund}The fundamental function $\Phi_{2s+1}\left(  x\right)  $
satisfies the recursion
\begin{equation}
\Phi_{2s+3}\left(  x\right)  =x^{2}\frac{1}{4s\left(  s+1\right)  }\Phi
_{2s-1}\left(  x\right)  -\frac{2s+1}{2s+2}\Phi_{2s+1}\left(  x\right)  ,
\label{rec3}%
\end{equation}
and the following estimate holds for all $x>0:$
\begin{equation}
0\leq\frac{\Phi_{2s+1}\left(  x\right)  }{\Phi_{2s-1}\left(  x\right)  }%
<\frac{x^{2}}{2s\left(  2s+1\right)  }. \label{rec4}%
\end{equation}
\end{corollary}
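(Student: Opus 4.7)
The plan is to derive both parts of Corollary \ref{CorFund} from the polynomial recurrence (\ref{rec2}) by using the identity
\[
\Phi_{2s+1}(x)=e^{x}P_{s}^{0}(x)-e^{-x}P_{s}^{0}(-x)
\]
established in (\ref{idphi1}), together with the positivity of $\Phi_{2k+1}(x)$ for $x>0$ proved in Section \ref{S1} (the eigenvalues $\pm1$ are real, so the relevant proposition applies).

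First I would prove the recursion (\ref{rec3}). The idea is to apply (\ref{rec2}) at $x$ and at $-x$, multiply the first instance by $e^{x}$ and the second by $e^{-x}$, and subtract. Since the multiplier $4s(s+1)$ and the factor $x^{2}$ are even in $x$, the combination on the left becomes
\[
4s(s+1)\bigl[e^{x}P_{s+1}^{0}(x)-e^{-x}P_{s+1}^{0}(-x)\bigr]=4s(s+1)\Phi_{2s+3}(x),
\]
while the right-hand side groups into
\[
x^{2}\bigl[e^{x}P_{s-1}^{0}(x)-e^{-x}P_{s-1}^{0}(-x)\bigr]-2s(2s+1)\bigl[e^{x}P_{s}^{0}(x)-e^{-x}P_{s}^{0}(-x)\bigr]=x^{2}\Phi_{2s-1}(x)-2s(2s+1)\Phi_{2s+1}(x).
\]
Dividing through by $4s(s+1)$ and simplifying $\frac{2s(2s+1)}{4s(s+1)}=\frac{2s+1}{2s+2}$ yields (\ref{rec3}).

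Second, for the estimate (\ref{rec4}) I would rearrange (\ref{rec3}) in the form
\[
\frac{x^{2}}{4s(s+1)}\Phi_{2s-1}(x)=\Phi_{2s+3}(x)+\frac{2s+1}{2s+2}\Phi_{2s+1}(x).
\]
By the positivity proposition recalled above, both $\Phi_{2s+3}(x)$ and $\Phi_{2s+1}(x)$ are strictly positive for $x>0$ (the eigenvalues are $\pm 1\in\mathbb{R}$), and similarly $\Phi_{2s-1}(x)>0$. Dropping the non-negative term $\Phi_{2s+3}(x)$ gives the strict inequality
\[
\frac{2s+1}{2s+2}\Phi_{2s+1}(x)<\frac{x^{2}}{4s(s+1)}\Phi_{2s-1}(x),
\]
and dividing by $\Phi_{2s-1}(x)$ and by $\frac{2s+1}{2s+2}$ produces $\frac{\Phi_{2s+1}(x)}{\Phi_{2s-1}(x)}<\frac{x^{2}}{2s(2s+1)}$, with the lower bound $\geq 0$ being immediate from positivity.

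There is no real obstacle: the essential input is (\ref{rec2}), which the authors take as given, and the only subtlety is checking that the parity of the factor $x^{2}$ and the evenness of the numerical coefficients make the $e^{x}$/$e^{-x}$ combination close up cleanly. The estimate then follows by the standard trick of discarding one non-negative summand in a positive identity.
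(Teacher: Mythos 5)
Your proposal is correct and follows essentially the same route as the paper: the recursion (\ref{rec3}) is exactly the ``straightforward calculation'' from (\ref{rec2}) and the identity $\Phi_{2s+1}(x)=e^{x}P_{s}^{0}(x)-e^{-x}P_{s}^{0}(-x)$ that the authors allude to just before the corollary, and your derivation of (\ref{rec4}) by dropping the positive term $\Phi_{2s+3}(x)$ and dividing by $\Phi_{2s-1}(x)>0$ is precisely the paper's argument. The only difference is cosmetic: you write out the $e^{x}$/$e^{-x}$ cancellation explicitly where the paper omits it.
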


\begin{proof}
Since $\Phi_{2s+3}\left(  x\right)  >0$ the equation (\ref{rec3}) implies
that
\[
\frac{2s+1}{2s+2}\Phi_{2s+1}\left(  x\right)  <x^{2}\frac{1}{4s\left(
s+1\right)  }\Phi_{2s-1}\left(  x\right)
\]
from which (\ref{rec4}) is immediate.
\end{proof}

Let us recall that after formula (\ref{eqphic}) we defined $\Phi_{2s}\left(
x\right)  $ as the fundamental function for the vector with $s+1$ many $1$ and
$s$ many $-1$. By $\Phi_{2s-}\left(  x\right)  $ we denoted the fundamental
function with respect to the vector with $s+1$ many $-1$ and $s$ many $1.$ We
shall denote sometimes $\Phi_{2s}\left(  x\right)  $ also by $\Phi
_{2s+}\left(  x\right)  $ in order to facilitate some formulas and to
underline the difference to $\Phi_{2s-}\left(  x\right)  .$ The following
simple identity
\[
\frac{1}{\left(  z-1\right)  ^{s+1}\left(  z+1\right)  ^{s}}-\frac{1}{\left(
z-1\right)  ^{s}\left(  z+1\right)  ^{s+1}}=\frac{2}{\left(  z-1\right)
^{s+1}\left(  z+1\right)  ^{s+1}}%
\]
implies the formula
\[
\Phi_{2s}\left(  x\right)  -\Phi_{2s-}\left(  x\right)  =2\Phi_{2s+1}\left(
x\right)  .
\]
This formula can also be derived by summing up the following two identities in
Theorem \ref{Thmneuid} which we shall derive from Theorem \ref{ThmR1}.

\begin{theorem}
\label{Thmneuid}The following two recursions hold:
\begin{align*}
\Phi_{2s+1}\left(  x\right)   &  =\Phi_{2s}\left(  x\right)  -\frac{1}%
{2s}x\cdot\Phi_{2s-1}\left(  x\right)  ,\\
\Phi_{2s+1}\left(  x\right)   &  =-\Phi_{2s-}\left(  x\right)  +\frac{1}%
{2s}x\cdot\Phi_{2s-1}\left(  x\right)  .
\end{align*}
\end{theorem}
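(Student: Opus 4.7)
The plan is to work directly with the contour integral representations from Section \ref{S2}, bypassing Theorem \ref{ThmR1} altogether. I would begin by writing $\Phi_{2s-1}$, $\Phi_{2s-}$, $\Phi_{2s}$ and $\Phi_{2s+1}$ as integrals of $e^{xz}$ against kernels of the form $[(z-1)^a(z+1)^b]^{-1}$ over the common contour $\Gamma_r$, so that differences of these functions can be grouped into a single integral. Using the elementary identities $1-(z+1)=-z$ and $1+(z-1)=z$, the two combinations
\[
\Phi_{2s+1}(x) - \Phi_{2s}(x) \qquad\text{and}\qquad \Phi_{2s+1}(x) + \Phi_{2s-}(x)
\]
collapse to (respectively the negative and the positive of) the single contour integral
\[
I_s(x) \;:=\; \frac{1}{2\pi i}\int_{\Gamma_r}\frac{z\,e^{xz}}{(z^2-1)^{s+1}}\,dz.
\]

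The heart of the argument is then the algebraic identity
\[
\frac{z}{(z^2-1)^{s+1}} \;=\; -\frac{1}{2s}\,\frac{d}{dz}\frac{1}{(z^2-1)^{s}},
\]
which is just the chain rule applied to $(z^2-1)^{-s}$. Inserting this into $I_s(x)$ and integrating by parts around the closed curve $\Gamma_r$ (where the boundary contribution vanishes because the integrand is meromorphic and $\Gamma_r$ is closed) shifts the $z$-derivative onto $e^{xz}$, produces a factor of $x$, and leaves exactly the integrand of $\Phi_{2s-1}(x)$, giving
\[
I_s(x) \;=\; \frac{x}{2s}\,\Phi_{2s-1}(x).
\]
Substituting this value back into the two displayed differences yields precisely the two recursion formulas of the theorem.

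I do not expect any genuine obstacle: the proof is essentially bookkeeping, and the only delicate point is sign-tracking — the minus sign from $1-(z+1)=-z$ as against the plus sign from $1+(z-1)=z$, together with the $-\tfrac{1}{2s}$ produced by integration by parts. A reassuring internal check is that adding the two identities recovers $\Phi_{2s}(x) - \Phi_{2s-}(x) = 2\Phi_{2s+1}(x)$, the relation already derived immediately above the statement from the trivial partial-fraction identity, so the result we prove is consistent with the preceding discussion.
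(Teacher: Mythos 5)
Your proof is correct, and it takes a genuinely different route from the paper's. The paper obtains both identities as the special case $k=2s-1$ of the Bernstein-basis recursion of Theorem \ref{ThmR1}, using that $p_{2s+1,2s+1}=\Phi_{2s+1}$, $p_{2s\pm,2s}=\Phi_{2s\pm}$, $p_{2s-1,2s-1}=\Phi_{2s-1}$, and then evaluating the normalizing constant $A_{\pm}=\mp 2s$ from the Taylor coefficients $\Phi_{2s-1}^{(2s)}(0)=0$ and $\Phi_{2s\pm}^{(2s+1)}(0)=\pm1$ supplied by Proposition \ref{PropTaylor}. You instead work directly with the contour-integral definition: the partial-fraction steps $1-(z+1)=-z$ and $1+(z-1)=z$ reduce the two combinations to $\mp I_s(x)$ with $I_s(x)=\frac{1}{2\pi i}\int_{\Gamma_r}z e^{xz}(z^2-1)^{-s-1}dz$, and the identity $z(z^2-1)^{-s-1}=-\frac{1}{2s}\frac{d}{dz}(z^2-1)^{-s}$ plus integration by parts over the closed contour (legitimate, since $e^{xz}(z^2-1)^{-s}$ is single-valued and holomorphic on $\Gamma_r$ for $r>1$) gives $I_s(x)=\frac{x}{2s}\Phi_{2s-1}(x)$; all signs check out, and your consistency test against $\Phi_{2s}-\Phi_{2s-}=2\Phi_{2s+1}$ is valid. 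Your argument is shorter and entirely self-contained, needing only the definition (\ref{defPhi}) and requiring $s\geq1$; what the paper's route buys is that it exhibits these identities as instances of the general recursion for the Bernstein basis functions developed in Section \ref{S5}, which is the machinery the paper wants to showcase, whereas your method exploits the special integral kernel and would not generalize to the basis functions $p_{2s+1,k}$ for $k<2s+1$.
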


\begin{proof}
We derive the result from Theorem \ref{ThmR1}: We choose $k=2s-1$ in the
relation
\[
A_{\pm}p_{2s+1,2s+1}\left(  x\right)  =x\cdot p_{2s-1,2s-1}-2s\cdot
p_{2s\pm,2s}\left(  x\right)
\]
or which is the same (up to notation)
\[
A_{\pm}\Phi_{2s+1}\left(  x\right)  =x\cdot\Phi_{2s-1}-2s\cdot\Phi_{2s\pm
}\left(  x\right)  .
\]
By Proposition \ref{PropTaylor}
\begin{align*}
\Phi_{2s-1}^{2s}\left(  0\right)   &  =\lambda_{0}+....+\lambda_{2s-1}=0\\
\Phi_{2s\pm}^{\left(  2s+1\right)  }  &  =\lambda_{0}+...+\lambda_{2s\pm}%
=\pm1.
\end{align*}
So $A_{\pm}=\left(  2s+1\right)  \Phi_{2s-1}^{\left(  2s\right)  }\left(
0\right)  -2s\Phi_{2s\pm}^{\left(  2s+1\right)  }\left(  0\right)
=\pm1\left(  -2s\right)  .$
\end{proof}

\begin{corollary}
The following limit exists
\[
\lim_{s\rightarrow\infty}\frac{\Phi_{2s}\left(  x\right)  }{\Phi_{2s-}\left(
x\right)  }\rightarrow1.
\]
\end{corollary}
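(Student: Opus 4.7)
The plan is to combine two previously-established facts. The first is the identity
\[
\Phi_{2s}(x) - \Phi_{2s-}(x) = 2\,\Phi_{2s+1}(x)
\]
derived immediately before Theorem \ref{Thmneuid}. The second is the quadratic decay estimate
\[
0 \le \frac{\Phi_{2s+1}(x)}{\Phi_{2s-1}(x)} < \frac{x^{2}}{2s(2s+1)}
\]
from Corollary \ref{CorFund}. Writing the ratio in the form
\[
\frac{\Phi_{2s}(x)}{\Phi_{2s-}(x)} = 1 + 2\,\frac{\Phi_{2s+1}(x)}{\Phi_{2s-}(x)},
\]
the statement reduces to showing that $\Phi_{2s+1}(x)/\Phi_{2s-}(x) \to 0$ for each fixed $x>0$.

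For that second step I would invoke the second recursion in Theorem \ref{Thmneuid}, namely $\Phi_{2s-}(x) = \tfrac{x}{2s}\,\Phi_{2s-1}(x) - \Phi_{2s+1}(x)$. Dividing numerator and denominator of the ratio by $\Phi_{2s-1}(x) > 0$ (positivity holds because all eigenvalues are real, by the proposition preceding Proposition \ref{PropSub}) and setting $r_s := \Phi_{2s+1}(x)/\Phi_{2s-1}(x)$, I obtain
\[
\frac{\Phi_{2s+1}(x)}{\Phi_{2s-}(x)} = \frac{r_s}{\tfrac{x}{2s} - r_s} = \frac{2s\,r_s/x}{1 - 2s\,r_s/x}.
\]
Corollary \ref{CorFund} yields $2s\,r_s/x < x/(2s+1)\to 0$, so the displayed ratio tends to $0$ and the claim follows.

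The denominator $\tfrac{x}{2s} - r_s$ is automatically strictly positive, since multiplying through by $\Phi_{2s-1}(x)$ shows it equals $\Phi_{2s-}(x)/\Phi_{2s-1}(x) > 0$; thus there is no genuine obstacle, and the quadratic-in-$1/s$ decay of $r_s$ easily dominates the linear-in-$1/s$ term $x/(2s)$ in the denominator. The whole argument is essentially bookkeeping on top of the identity $\Phi_{2s}-\Phi_{2s-} = 2\Phi_{2s+1}$ and the bound from Corollary \ref{CorFund}.
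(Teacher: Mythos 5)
Your proof is correct and follows essentially the same route as the paper: both arguments reduce everything to the recursions of Theorem \ref{Thmneuid} (your difference identity $\Phi_{2s}-\Phi_{2s-}=2\Phi_{2s+1}$ is just their sum) together with the bound of Corollary \ref{CorFund}, which forces $s\cdot\Phi_{2s+1}(x)/\Phi_{2s-1}(x)\rightarrow 0$. The paper simply writes the ratio directly as $\bigl(s y_{s}+\tfrac{1}{2}x\bigr)/\bigl(\tfrac{1}{2}x-s y_{s}\bigr)$ rather than splitting off the constant $1$ first, so the difference is purely one of bookkeeping.
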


\begin{proof}
By Theorem \ref{Thmneuid} we have
\[
\frac{\Phi_{2s}\left(  x\right)  }{\Phi_{2s-}\left(  x\right)  }=\frac
{\Phi_{2s+1}\left(  x\right)  +\frac{1}{2s}x\cdot\Phi_{2s-1}\left(  x\right)
}{\frac{1}{2s}x\cdot\Phi_{2s-1}\left(  x\right)  -\Phi_{2s+1}\left(  x\right)
}.
\]
Let us define $y_{s}:=\Phi_{2s+1}\left(  x\right)  /\Phi_{2s-1}\left(
x\right)  ,$ then
\begin{equation}
\frac{\Phi_{2s}\left(  x\right)  }{\Phi_{2s-}\left(  x\right)  }=\frac{s\cdot
y_{s}+\frac{1}{2}x}{\frac{1}{2}x-s\cdot y_{s}}. \label{eqschoen}%
\end{equation}
From Theorem \ref{CorFund} we see that $s\cdot y_{s}$ converges to $0,$ so
(\ref{eqschoen}) converges to $1.$
\end{proof}

We mention that one can derive also recursion formula for the derivatives,
e.g. the following identity holds:

\begin{theorem}
The derivatives $\frac{d}{dx}P_{s}^{0}$ of the polynomials $P_{s}^{0}\left(
x\right)  $ can be computed by
\[
\frac{d}{dx}P_{s}^{0}\left(  x\right)  =\frac{1}{s}\frac{x}{2}P_{s-1}%
^{0}\left(  x\right)  -P_{s}^{0}\left(  x\right)  .
\]
\end{theorem}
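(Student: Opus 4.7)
The plan is to work directly from the contour integral representation
\[
P_s^0(x) = \frac{1}{2\pi i}\int_{\Gamma_1}\frac{e^{xz}}{z^{s+1}(z+2)^{s+1}}\,dz
\]
from (\ref{Psalfa}) and exploit that $\Gamma_1$ is a closed contour, so integration by parts produces no boundary terms. Differentiating under the integral sign brings down a factor of $z$ in the numerator, and adding $P_s^0(x)$ itself gives the clean combination
\[
\frac{d}{dx}P_s^0(x) + P_s^0(x) = \frac{1}{2\pi i}\int_{\Gamma_1}\frac{(z+1)\,e^{xz}}{z^{s+1}(z+2)^{s+1}}\,dz,
\]
so the target identity is equivalent to showing that this integral equals $\tfrac{x}{2s}P_{s-1}^0(x)$.

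The key algebraic observation is that the factor $(z+1)$ in the numerator is, up to the constant $-2s$, exactly what one obtains by differentiating the simpler kernel $z^{-s}(z+2)^{-s}$:
\[
\frac{d}{dz}\Bigl[\frac{1}{z^s(z+2)^s}\Bigr] = -s\Bigl[\frac{1}{z^{s+1}(z+2)^s} + \frac{1}{z^s(z+2)^{s+1}}\Bigr] = -\frac{2s(z+1)}{z^{s+1}(z+2)^{s+1}}.
\]
Consequently, integration by parts against $e^{xz}$ on the closed contour $\Gamma_1$ yields
\[
-2s\int_{\Gamma_1}\frac{(z+1)e^{xz}}{z^{s+1}(z+2)^{s+1}}\,dz = \int_{\Gamma_1}e^{xz}\,\frac{d}{dz}\Bigl[\frac{1}{z^s(z+2)^s}\Bigr]\,dz = -x\int_{\Gamma_1}\frac{e^{xz}}{z^s(z+2)^s}\,dz,
\]
and the right-hand side is, after division by $2\pi i$, precisely $-x\,P_{s-1}^0(x)$. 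Combining this with the previous display gives $2s\bigl[\tfrac{d}{dx}P_s^0(x)+P_s^0(x)\bigr] = x\,P_{s-1}^0(x)$, and rearranging yields the stated recurrence.

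There is no real obstacle here: once one spots the identity relating $(z+1)/[z^{s+1}(z+2)^{s+1}]$ to the derivative of $z^{-s}(z+2)^{-s}$, the proof reduces to a single application of integration by parts. The only point to verify is that $\Gamma_1$ (the unit circle) encloses the pole at $z=0$ while leaving $z=-2$ outside, so that the antiderivative $z^{-s}(z+2)^{-s}\cdot e^{xz}$ is single-valued along the contour and the boundary contribution genuinely vanishes; but this is guaranteed by the very definition (\ref{Psalfa}) of $P_s^\alpha$.
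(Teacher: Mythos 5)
Your argument is correct. Note first that the paper states this theorem without any proof at all (it closes Section \ref{S2}, prefaced only by the remark that ``one can derive also recursion formula for the derivatives''), so there is no official argument to compare against; your derivation fills a genuine omission. Both steps check out: differentiating (\ref{Psalfa}) under the integral sign and adding $P_{s}^{0}(x)$ produces the kernel $(z+1)z^{-s-1}(z+2)^{-s-1}$, and the identity
\[
\frac{d}{dz}\Bigl[z^{-s}\left(  z+2\right)  ^{-s}\Bigr]=-\frac{2s\left(
z+1\right)  }{z^{s+1}\left(  z+2\right)  ^{s+1}}
\]
combined with integration by parts over the closed contour $\Gamma_{1}$ (legitimate because $z^{-s}(z+2)^{-s}e^{xz}$ is a single-valued meromorphic function away from $0$ and $-2$, neither of which lies on $\Gamma_{1}$, so the integral of its $z$-derivative over the closed curve vanishes) converts that integral into $\frac{x}{2s}P_{s-1}^{0}(x)$. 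A spot check against the table in Section \ref{S2} confirms it: for $s=2$ one has $\frac{d}{dx}P_{2}^{0}=\frac{2x-3}{16}$ while $\frac{x}{4}P_{1}^{0}(x)-P_{2}^{0}(x)=\frac{x^{2}-x}{16}-\frac{x^{2}-3x+3}{16}=\frac{2x-3}{16}$. The route the authors presumably intended --- manipulating the explicit coefficient formula for $P_{s}^{0}$ from the Lemma in Section \ref{S2}, in the spirit of the ``direct but somewhat tedious calculation'' they invoke for (\ref{rec2}) --- would also work, but your contour argument is shorter, cleaner, and generalizes immediately to $P_{s}^{\alpha}$ with $\alpha\neq0$ if one keeps track of the extra factor $(z+2)^{-\alpha}$.
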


\section{\label{S3}Generating functions}

The Lagrange inversion formula, see e.g. \cite{AAR}, is another way to
investigate the polynomials defined in (\ref{Psalfa})
\begin{equation}
P_{n}^{\alpha}\left(  x\right)  =\frac{1}{2\pi i}\int_{\Gamma_{1}}\frac
{e^{xz}}{z^{n+1}\left(  z+2\right)  ^{n+1+\alpha}}dz, \label{defpn}%
\end{equation}
where $\alpha$ is a fixed integer. Since this powerful method is somewhat
technical, let us recall the basic facts. In our case we put $\varphi\left(
z\right)  =1/(z+2)$ and $f^{\prime}\left(  z\right)  :=e^{xz}/\left(
z+2\right)  ^{\alpha}.$ The fundamental idea of Lagrange inversion is based on
the observation that
\begin{equation}
a_{n}:=\frac{1}{2\pi i}\int_{\Gamma_{1}}\frac{f^{\prime}\left(  z\right)
}{z^{n+1}}\left[  \varphi\left(  z\right)  \right]  ^{n+1}dz \label{eqan}%
\end{equation}
can be seen as the $n$-th Taylor coefficient of a holomorphic function which
will be constructed from $f^{\prime}$ and $\varphi.$ More generally, we may
assume that $\varphi$ is a holomorphic function in a neighborhood of $0$ such
that $\varphi\left(  0\right)  \neq0,$ and $f$ is holomorphic in a
neighbhorhood of $0,$ and we define $a_{n}$ by the expression (\ref{eqan}).
Consider the function
\[
y\left(  z\right)  :=\frac{z}{\varphi\left(  z\right)  }%
\]
which is holomorphic in a neighborhood of $0$ (since $\varphi\left(  0\right)
\neq0)$ with $y\left(  0\right)  =0.$ Since obviously $y\left(  z\right)
\varphi\left(  z\right)  =z$ we obtain $\varphi\left(  z\right)  y^{\prime
}\left(  z\right)  +y\left(  z\right)  \varphi^{\prime}\left(  z\right)  =1,$
so $y^{\prime}\left(  0\right)  \neq0.$ Hence $y$ is injective in a
neighborhood; let $y^{-1}$ be the inverse map. Since $y^{-1}\circ y\left(
z\right)  =z$ one has $\frac{d}{dz}y^{-1}\left(  y\left(  z\right)  \right)
\cdot\frac{d}{dz}y\left(  z\right)  =1,$ and using this formula one arrives
at
\[
(\frac{d}{dy}\left(  f\circ y^{-1}\right)  )\left(  y\right)  =f^{\prime
}\left(  y^{-1}\left(  y\right)  \right)  \cdot(\frac{d}{dz}y^{-1})\left(
y\right)  =f^{\prime}\left(  z\right)  \cdot\frac{1}{\frac{d}{dz}y\left(
z\right)  }.
\]
for $y=y\left(  z\right)  .$ Thus we obtain
\begin{equation}
\frac{f^{\prime}\left(  z\right)  \varphi\left(  z\right)  ^{n+1}}{z^{n+1}%
}=\frac{f^{\prime}\left(  z\right)  }{y\left(  z\right)  ^{n+1}}=\frac
{\frac{d}{dy}\left(  f\circ y^{-1}\right)  \left(  y\left(  z\right)  \right)
\cdot\frac{d}{dz}y\left(  z\right)  }{y\left(  z\right)  ^{n+1}}.
\label{eqresid}%
\end{equation}
Let now $\gamma\left(  t\right)  =re^{it}$ for $r>0$ sufficient small, and put
$z=\gamma\left(  t\right)  .$ Note that $\Gamma\left(  t\right)  =y\left(
\gamma\left(  t\right)  \right)  $ is a path surrounding zero. Insert
$z=\gamma\left(  t\right)  $ in (\ref{eqresid}) and multiply it with
$\gamma^{\prime}\left(  t\right)  /2\pi i$, so we obtain
\begin{align*}
a_{n}  &  =\frac{1}{2\pi i}\int_{\gamma}\frac{f^{\prime}\left(  z\right)
}{z^{n+1}}\left[  \varphi\left(  z\right)  \right]  ^{n+1}dz\\
&  =\frac{1}{2\pi i}\int_{0}^{2\pi}\frac{\frac{d}{dy}\left(  f\circ
y^{-1}\right)  \left(  y\left(  \gamma\left(  t\right)  \right)  \right)
\cdot\frac{d}{dz}y\left(  \gamma\left(  t\right)  \right)  }{y\left(
\gamma\left(  t\right)  \right)  ^{n+1}}\gamma^{\prime}\left(  t\right)  dt\\
&  =\frac{1}{2\pi i}\int_{\Gamma}\frac{\frac{d}{dy}\left(  f\circ
y^{-1}\right)  \left(  y\right)  }{y^{n+1}}dy.
\end{align*}
Let us define $F\left(  y\right)  =\frac{d}{dy}\left(  f\circ y^{-1}\right)
\left(  y\right)  ,$ then by the Cauchy formula for the Taylor coefficients
one obtains
\[
a_{n}=\frac{1}{n!}F^{\left(  n\right)  }\left(  0\right)  =\frac{1}{n!}%
\frac{d^{n}}{dy^{n}}F\left(  0\right)  .
\]
Since the Taylor series of $F\left(  y\right)  $ is equal to $F\left(
y\right)  $ we obtain
\begin{equation}
\sum_{n=0}^{\infty}a_{n}y^{n}=F\left(  y\right)  =\frac{d}{dy}\left(  f\circ
y^{-1}\right)  \left(  y\right)  =f^{\prime}\left(  y^{-1}\left(  y\right)
\right)  \frac{dy^{-1}}{dy}\left(  y\right)  . \label{eqLagrange}%
\end{equation}
So this means that for the computation of the unknown sum $\sum_{n=0}^{\infty
}a_{n}y^{n}$ we only have to compute $y^{-1}\left(  y\right)  $, and the
derivative $\frac{dy^{-1}}{dy}\left(  y\right)  $ and then we have to compute
the right hand side of (\ref{eqLagrange}). As an application we prove:

\begin{theorem}
\label{ThmGener}Let $P_{n}^{\alpha}\left(  x\right)  $ be defined by the
expression (\ref{defpn}). Then for all $y$ with $\left|  y\right|  <1$
\[
F_{P^{\alpha}}\left(  x,y\right)  :=\sum_{n=0}^{\infty}P_{n}^{\alpha}\left(
x\right)  y^{n}=e^{-x}\frac{e^{x\sqrt{y+1}}}{2\sqrt{y+1}}\frac{1}{\left(
\sqrt{y+1}+1\right)  ^{\alpha}}.
\]
\end{theorem}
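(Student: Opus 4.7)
The plan is to apply the Lagrange inversion formula (\ref{eqLagrange}) with the particular choices of $\varphi$ and $f'$ that were already identified just above the theorem, namely $\varphi(z)=1/(z+2)$ and $f'(z)=e^{xz}/(z+2)^\alpha$. Under these choices the integrand in (\ref{defpn}) equals $f'(z)\varphi(z)^{n+1}/z^{n+1}$, so $P_n^\alpha(x)=a_n$ in the notation of (\ref{eqan}). The task then reduces to evaluating the right-hand side of (\ref{eqLagrange}) explicitly.

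First I would compute the auxiliary function
\[
y(z)=\frac{z}{\varphi(z)}=z(z+2)=z^2+2z,
\]
and invert it near $z=0$. Solving the quadratic $z^2+2z-y=0$ and selecting the branch that satisfies $y^{-1}(0)=0$, I obtain
\[
y^{-1}(y)=\sqrt{y+1}-1,
\]
an analytic function on the disc $\{|y|<1\}$ (where the principal square root of $y+1$ is holomorphic). Differentiation gives $\frac{dy^{-1}}{dy}(y)=\frac{1}{2\sqrt{y+1}}$.

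Next I would substitute these into (\ref{eqLagrange}). At $z=y^{-1}(y)=\sqrt{y+1}-1$ one has $z+2=\sqrt{y+1}+1$ and $e^{xz}=e^{-x}e^{x\sqrt{y+1}}$, so
\[
f'(y^{-1}(y))\cdot\frac{dy^{-1}}{dy}(y)=\frac{e^{-x}\,e^{x\sqrt{y+1}}}{(\sqrt{y+1}+1)^\alpha}\cdot\frac{1}{2\sqrt{y+1}},
\]
which is precisely the claimed formula for $F_{P^\alpha}(x,y)$.

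The only non-routine point is justifying that (\ref{eqLagrange}) applies in the stated range $|y|<1$. This amounts to checking that (a) $\varphi$ is holomorphic with $\varphi(0)\ne 0$ in a neighbourhood of $0$ (clear, since $\varphi(0)=1/2$), (b) the inverse $y^{-1}$ extends holomorphically throughout $|y|<1$ (which it does, the only obstruction $y=-1$ lying on the boundary), and (c) the series $\sum P_n^\alpha(x)y^n$ converges on that disc, which follows from the explicit closed form of the right-hand side being holomorphic there. Once these analytic prerequisites are in place, the identity is an immediate consequence of the Lagrange machinery already established in the paragraph preceding the theorem.
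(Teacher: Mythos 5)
Your proposal is correct and follows essentially the same route as the paper: both apply the Lagrange inversion formula with $\varphi(z)=1/(z+2)$ and $f'(z)=e^{xz}/(z+2)^{\alpha}$, invert $y(z)=z(z+2)$ to get $y^{-1}(y)=\sqrt{y+1}-1$ with derivative $1/(2\sqrt{y+1})$, and substitute. Your added remarks on the analytic prerequisites (holomorphy of the branch on $|y|<1$) are a sensible supplement but do not change the argument.
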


\begin{proof}
Let $\varphi\left(  z\right)  =\left(  z+2\right)  ^{-1}$ and $y\left(
z\right)  :=z/\varphi\left(  z\right)  $, so $y\left(  z\right)  =z\left(
z+2\right)  .$ We define the inverse function $y^{-1}$ of $y\left(  z\right)
$ by
\[
y^{-1}\left(  y\right)  :=\sqrt{y+1}-1.
\]
Then $y^{-1}\left(  y\left(  z\right)  \right)  =\sqrt{z\left(  z+2\right)
+1}-1=z $ and note that
\[
\frac{dy^{-1}}{dy}\left(  y\right)  =\frac{1}{2\sqrt{y+1}}.
\]
Define $f^{\prime}\left(  z\right)  =e^{xz}/\left(  z+2\right)  ^{\alpha}.$
Then the Lagrange inversion formula tells us that
\[
\sum_{n=0}^{\infty}P_{n}^{\alpha}\left(  x\right)  y^{n}=f^{\prime}\left(
\sqrt{y+1}-1\right)  \frac{1}{2\sqrt{y+1}}\frac{1}{\left(  \sqrt
{y+1}+1\right)  ^{\alpha}}%
\]
which is exactly our claim.
\end{proof}

\begin{remark}
It is easy to derive a second order differential equation for the generating
function $y\longmapsto F_{P^{0}}\left(  x,y\right)  ,$ and this can be used to
give a proof for the recursion (\ref{rec2}).
\end{remark}

\begin{theorem}
The following identity
\[
F_{\text{odd }}\left(  x,y\right)  =\sum_{s=0}^{\infty}\Phi_{2s+1}\left(
x\right)  \cdot y^{s}=\frac{1}{\sqrt{y+1}}\sinh\left(  x\sqrt{y+1}\right)
\]
holds for all $y\in\mathbb{C}$.
\end{theorem}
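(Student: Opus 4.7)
The plan is to reduce everything to the identity
$$\Phi_{2s+1}(x) = e^{x}P_{s}^{0}(x) - e^{-x}P_{s}^{0}(-x)$$
established as formula (\ref{idphi1}), and then apply the generating-function formula from Theorem \ref{ThmGener} with $\alpha = 0$, namely
$$\sum_{s=0}^{\infty} P_{s}^{0}(x)\, y^{s} = e^{-x}\,\frac{e^{x\sqrt{y+1}}}{2\sqrt{y+1}},$$
valid for $|y|<1$.

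First I would multiply (\ref{idphi1}) by $y^s$ and sum over $s\geq 0$. Provided the interchange of sums is legitimate, this gives
$$F_{\text{odd}}(x,y) = e^{x}\sum_{s=0}^{\infty} P_{s}^{0}(x)\, y^{s} \;-\; e^{-x}\sum_{s=0}^{\infty} P_{s}^{0}(-x)\, y^{s}.$$
Substituting Theorem \ref{ThmGener} with argument $x$ in the first sum and with argument $-x$ in the second sum, the prefactors $e^{\pm x}$ cancel the $e^{\mp x}$ coming from the generating function, leaving
$$F_{\text{odd}}(x,y) = \frac{e^{x\sqrt{y+1}} - e^{-x\sqrt{y+1}}}{2\sqrt{y+1}} = \frac{\sinh(x\sqrt{y+1})}{\sqrt{y+1}},$$
which is exactly the claimed identity.

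For the convergence issue, I would note that $P_{s}^{0}(x)$ is a polynomial of degree $s$ in $x$, and Theorem \ref{ThmGener} gives absolute convergence of $\sum P_{s}^{0}(\pm x)\, y^{s}$ on $|y|<1$ for fixed $x$, so the termwise manipulation above is unambiguously justified there. The statement is claimed to hold ``for all $y\in\mathbb{C}$'', but both sides are entire functions of $y$ (note that $\sinh(x\sqrt{y+1})/\sqrt{y+1}$ is actually entire in $y$ because $\sinh$ is odd, so only even powers of $\sqrt{y+1}$ survive and the apparent branch point at $y=-1$ is removable; similarly each $\Phi_{2s+1}(x)$ is entire and one can verify that the coefficient series has infinite radius using the estimate (\ref{rec4}), which shows $|\Phi_{2s+1}(x)| \leq \frac{x^{2s}}{(2s+1)!}\sinh(x)$-type decay). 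Hence the identity, established on $|y|<1$, extends to all of $\mathbb{C}$ by analytic continuation.

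The main obstacle is essentially bookkeeping: making sure the $e^{\pm x}$ factors from (\ref{idphi1}) combine correctly with the $e^{\mp x}$ factors in Theorem \ref{ThmGener} (applied with $\pm x$), and justifying that the sum-difference of the two generating functions extends beyond the disk $|y|<1$. Neither step presents genuine difficulty once the substitution is performed carefully.
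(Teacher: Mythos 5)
Your proposal is correct and follows essentially the same route as the paper: both decompose $\Phi_{2s+1}(x)=e^{x}P_{s}^{0}(x)-e^{-x}P_{s}^{0}(-x)$, sum against $y^{s}$, apply Theorem \ref{ThmGener} with $\alpha=0$ at $\pm x$ so the exponential prefactors cancel, and then observe that the right-hand side is entire in $y$ (only even powers of $\sqrt{y+1}$ survive in $\sinh$) to extend from $|y|<1$ to all of $\mathbb{C}$. Your additional remarks on convergence are a slight refinement but do not change the argument.
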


\begin{proof}
Since $\Phi_{2s+1}\left(  x\right)  =e^{x}P_{s}^{0}\left(  x\right)
-e^{-x}P_{s}^{0}\left(  -x\right)  $ we obtain
\[
F_{\text{odd }}\left(  x,y\right)  =e^{x}\sum_{s=0}^{\infty}P_{s}^{0}\left(
x\right)  y^{s}-e^{-x}\sum_{s=0}^{\infty}P_{s}^{0}\left(  -x\right)  y^{s}.
\]
Using Theorem \ref{ThmGener} a short computation shows that this is equal to
\[
F_{\text{odd }}\left(  x,y\right)  =\frac{1}{\sqrt{y+1}}\frac{1}{2}\left(
e^{x\sqrt{y+1}}-e^{-x\sqrt{y+1}}\right)  =\frac{1}{\sqrt{y+1}}\sinh
x\sqrt{y+1}.
\]
Since
\[
\sum_{n=0}^{\infty}P_{n}^{0}\left(  x\right)  y^{n}=\exp\left(  x\left(
\sqrt{y+1}-1\right)  \right)  \frac{1}{2\sqrt{y+1}}%
\]
we obtain
\begin{align*}
F_{\text{odd }}\left(  x,y\right)   &  =\left(  x\sqrt{y+1}\right)  \frac
{1}{2\sqrt{y+1}}-\exp\left(  -x\sqrt{y+1}\right)  \frac{1}{2\sqrt{y+1}}\\
&  =\frac{1}{\sqrt{y+1}}\sinh\left(  x\sqrt{y+1}\right)  .
\end{align*}
Since $\sinh z$ contains only odd powers in the Taylor expansion it follows
that $y\longmapsto F_{\text{odd }}\left(  x,y\right)  $ is entire.
\end{proof}

Let $\lambda_{0},\lambda_{1},....$ be a sequence of real numbers. In the
following we consider the generating function
\[
F_{\Lambda}\left(  x,y\right)  :=\sum_{n=0}^{\infty}\Phi_{\left(  \lambda
_{0},...,\lambda_{n}\right)  }\left(  x\right)  \cdot y^{n}%
\]
for the case that $\lambda_{2j}=1$ and $\lambda_{2j+1}=-1$ for all
$j\in\mathbb{N}_{0}$. With the notation from the introduction we have
\[
\Phi_{\left(  \lambda_{0},...,\lambda_{2s}\right)  }=\Phi_{2s+1}\text{ and
}\Phi_{\left(  \lambda_{0},...,\lambda_{2s-1}\right)  }=\Phi_{2s}%
\]
and now we consider
\begin{equation}
F_{\Lambda}\left(  x,y\right)  =\sum_{s=0}^{\infty}\Phi_{2s+1}\left(
x\right)  \cdot y^{2s+1}+\sum_{s=0}^{\infty}\Phi_{2s}\left(  x\right)  \cdot
y^{2s}. \label{defgen}%
\end{equation}
It is easy to see that $\left(  \frac{d}{dx}+1\right)  \Phi_{2s+1}\left(
x\right)  =\Phi_{2s}\left(  x\right)  $ (cf. the general formula
(\ref{rec0})), so we have
\[
\sum_{s=0}^{\infty}\Phi_{2s}\left(  x\right)  \cdot y^{2s}=\left(  \frac
{d}{dx}+1\right)  \sum_{s=0}^{\infty}\Phi_{2s+1}\left(  x\right)  \cdot
y^{2s}=\left(  \frac{d}{dx}+1\right)  F_{\text{odd }}\left(  x,y^{2}\right)
.
\]
Thus we have
\[
F_{\Lambda}\left(  x,y\right)  =\left(  1+y\right)  F_{\text{odd }}\left(
x,y^{2}\right)  +\frac{d}{dx}F_{\text{odd }}\left(  x,y^{2}\right)
\]
and the following is proved:

\begin{theorem}
With the above notations, the generating function $F_{\Lambda}\left(
x,y\right)  $ in (\ref{defgen}) is equal to
\[
F_{\Lambda}\left(  x,y\right)  =\frac{1+y}{\sqrt{y^{2}+1}}\sinh\left(
x\sqrt{y^{2}+1}\right)  +\cosh\left(  x\sqrt{y^{2}+1}\right)  .
\]
\end{theorem}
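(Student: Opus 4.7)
The plan is to leverage the two pieces of machinery that have already been assembled in the excerpt: first, the decomposition
\[
F_{\Lambda}\left(x,y\right)=\left(1+y\right)F_{\text{odd}}\left(x,y^{2}\right)+\frac{d}{dx}F_{\text{odd}}\left(x,y^{2}\right),
\]
which was derived by splitting the series in (\ref{defgen}) into its even- and odd-indexed parts and applying the identity $\left(\frac{d}{dx}+1\right)\Phi_{2s+1}=\Phi_{2s}$; and second, the closed form
\[
F_{\text{odd}}\left(x,z\right)=\frac{1}{\sqrt{z+1}}\sinh\left(x\sqrt{z+1}\right)
\]
from the previous theorem. With both in hand, the proof becomes a direct substitution $z=y^{2}$ followed by a routine differentiation, so there is no real obstacle beyond bookkeeping.

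First I would substitute $z=y^{2}$ into the formula for $F_{\text{odd}}$ to get
\[
F_{\text{odd}}\left(x,y^{2}\right)=\frac{1}{\sqrt{y^{2}+1}}\sinh\left(x\sqrt{y^{2}+1}\right),
\]
which immediately produces the first summand $\frac{1+y}{\sqrt{y^{2}+1}}\sinh\left(x\sqrt{y^{2}+1}\right)$ after multiplying by $\left(1+y\right)$. Next I would differentiate in $x$, treating $y$ as a parameter; since $\frac{d}{dx}\sinh\left(x\sqrt{y^{2}+1}\right)=\sqrt{y^{2}+1}\cosh\left(x\sqrt{y^{2}+1}\right)$, the factor $\sqrt{y^{2}+1}$ cancels the $\frac{1}{\sqrt{y^{2}+1}}$ in front and we obtain
\[
\frac{d}{dx}F_{\text{odd}}\left(x,y^{2}\right)=\cosh\left(x\sqrt{y^{2}+1}\right).
\]
Summing the two contributions gives exactly the claimed expression.

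One small thing worth flagging explicitly is that the square root $\sqrt{y^{2}+1}$ is multivalued, but since both $\frac{\sinh\left(x\sqrt{y^{2}+1}\right)}{\sqrt{y^{2}+1}}$ and $\cosh\left(x\sqrt{y^{2}+1}\right)$ are even entire functions of the argument $\sqrt{y^{2}+1}$, the right-hand side is an unambiguous entire function of $y$; this is consistent with the fact that the previous theorem already showed $F_{\text{odd}}\left(x,\cdot\right)$ is entire, so the series defining $F_{\Lambda}\left(x,y\right)$ converges for every $y\in\mathbb{C}$ and the equality is valid there. No further estimate is needed beyond what has already been used to justify termwise differentiation of the power series for $F_{\text{odd}}$ within its (infinite) radius of convergence.
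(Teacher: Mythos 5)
Your proposal is correct and follows exactly the route the paper takes: the identity $F_{\Lambda}\left(x,y\right)=\left(1+y\right)F_{\text{odd}}\left(x,y^{2}\right)+\frac{d}{dx}F_{\text{odd}}\left(x,y^{2}\right)$ is derived in the text immediately preceding the theorem, and the paper then regards the substitution of the closed form for $F_{\text{odd}}$ and the differentiation in $x$ as already "proved." You simply carry out those final (routine) steps explicitly, together with a sensible remark on the well-definedness of $\sqrt{y^{2}+1}$, so there is nothing to add.
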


\section{\label{S4}Construction of Bernstein bases}

In this Section we return to the general theory of exponential polynomials
where the eigenvalues $\lambda_{0},...,\lambda_{n}$ may be complex numbers. We
shall characterize the spaces $E_{\left(  \lambda_{0},...,\lambda_{n}\right)
}$ of exponential polynomials which admit a Bernstein basis. Let us emphasize
that the existence results already follow from those in \cite{CMP04},
\cite{GoMa03}, \cite{Mazu99}, \cite{Mazu05} in the more general context of
Chebyshev spaces. We follow here our exposition in \cite{AKR} which is based
on a recursive definition of the Bernstein basis, and it seems that this
approach is be different from those in the above cited literature. Further
references on properties of Bernstein-like bases are \cite{FaGo96} and
\cite{Pena02}.

Let us recall some terminology and notations: The $k$-th derivative of a
function $f$ is denoted by $f^{\left(  k\right)  }.$ A function $f\in
C^{n}\left(  I,\mathbb{C}\right)  $ has a \emph{zero of order }$k$
or\emph{\ of multiplicity} $k$ at a point $a\in I$ if $f\left(  a\right)
=...=f^{\left(  k-1\right)  }\left(  a\right)  =0$ and $f^{\left(  k\right)
}\left(  a\right)  \neq0.$ We shall repeatedly use the fact that
\begin{equation}
k!\cdot\lim_{x\rightarrow0}\frac{f\left(  x\right)  }{\left(  x-a\right)
^{k}}=f^{\left(  k\right)  }\left(  a\right)  . \label{eqLim}%
\end{equation}
for any function $f\in C^{\left(  k\right)  }(I)$ with $f\left(  a\right)
=...=f^{\left(  k-1\right)  }\left(  a\right)  =0.$

Let us recall the definition of a Bernstein basis:

\begin{definition}
A system of functions $p_{n,k},$ $k=0,...,n$ in the space $E_{\left(
\lambda_{0},...,\lambda_{n}\right)  }$ is called \emph{Bernstein-like basis}
for $E_{\left(  \lambda_{0},...,\lambda_{n}\right)  }$ and $a\neq b$ if and
only if each function $p_{n,k}$ has a zero of exact order $k$ at $a$ and a
zero of exact order $n-k$ at $b$.
\end{definition}

In the following we want to characterize those spaces $E_{\left(  \lambda
_{0},...,\lambda_{n}\right)  }$ which admit a Bernstein basis. For this, we
need the following simple

\begin{lemma}
\label{Lem1}Suppose that $f_{0},...,f_{n}\in E_{\left(  \lambda_{0}%
,...,\lambda_{n}\right)  }$ have the property that $f_{k}$ has a zero of order
$k$ at $a$ for $k=0,...,n.$ Then $f_{0},...,f_{n}$ is a basis for any interval
$\left[  a,b\right]  .$
\end{lemma}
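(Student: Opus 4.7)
The goal is to show that $f_0,\dots,f_n$ form a basis for $E_{(\lambda_0,\dots,\lambda_n)}$. Since the dimension of this space equals $n+1$, it suffices to prove linear independence; the mention of ``for any interval $[a,b]$'' only emphasizes that the functions themselves, determined by the local zero conditions at $a$, automatically span the full space. Note also that the phrase ``zero of exact order $k$'' means $f_k^{(j)}(a)=0$ for $j<k$ while $f_k^{(k)}(a)\neq 0$; this is the crucial nondegeneracy I will exploit.

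My plan is to argue by a triangular system at the point $a$. Suppose we have a linear relation
\[
c_0 f_0 + c_1 f_1 + \dots + c_n f_n \equiv 0.
\]
Evaluating at $a$ kills every $f_k$ with $k\geq 1$ (all have at least a simple zero there), so $c_0 f_0(a)=0$, and since $f_0(a)\neq 0$, we conclude $c_0=0$. Next, differentiating once and evaluating at $a$ kills $f_k$ with $k\geq 2$ (zero of order $\geq 2$ implies vanishing first derivative), leaving $c_1 f_1'(a)=0$ and hence $c_1=0$. Iterating: after killing $c_0,\dots,c_{k-1}$, differentiate the relation $k$ times and evaluate at $a$; by the assumption each $f_j$ with $j>k$ has a zero of order $j>k$, so $f_j^{(k)}(a)=0$, while $f_k^{(k)}(a)\neq 0$, forcing $c_k=0$.

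This inductive step is essentially immediate from the definition of ``zero of exact order $k$'' and does not require any deeper fact about exponential polynomials. There is no serious obstacle; the one mild subtlety is simply to verify that $E_{(\lambda_0,\dots,\lambda_n)}$ does have dimension $n+1$ (which is standard, and was already used implicitly in the discussion after equation~(\ref{Espace})), so that the linearly independent system $f_0,\dots,f_n$ is in fact a basis. The conclusion then holds on every interval $[a,b]$ since linear independence of functions does not depend on the ambient interval of consideration (the functions are entire solutions of $Lf=0$).
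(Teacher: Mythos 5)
Your proposal is correct and follows essentially the same argument as the paper: assume a vanishing linear combination, evaluate successive derivatives at $a$, and use the triangular structure (each $f_j$ with $j>k$ has $f_j^{(k)}(a)=0$ while $f_k^{(k)}(a)\neq 0$) to conclude $c_k=0$ inductively, then invoke $\dim E_{(\lambda_0,\dots,\lambda_n)}=n+1$. No substantive difference.
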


\begin{proof}
It suffices to show that $f_{0},...,f_{n}$ are linearly independent since
$E_{\left(  \lambda_{0},...,\lambda_{n}\right)  }$ has dimension $n+1.$
Suppose that there exist complex numbers $c_{0},...,c_{n}$ such that
\[
c_{0}f_{0}\left(  x\right)  +....+c_{n}f_{n}\left(  x\right)  =0
\]
for all $x\in\left[  a,b\right]  .$ Since $f_{0}\left(  a\right)  \neq0$ and
$f_{j}\left(  a\right)  =0$ for all $j\geq1$ we obtain $c_{0}=0.$ Next we take
the derivative and obtain the equation
\[
c_{1}f_{1}^{\prime}\left(  x\right)  +....+c_{n}f_{n}^{\prime}\left(
x\right)  =0.
\]
We insert $x=a$ and obtain that $c_{1}=0$ since $f_{1}^{\prime}\left(
a\right)  \neq0$ and $f_{j}^{\prime}\left(  a\right)  =0$ for all $j=2,...,n.$
Now one proceeds inductively.
\end{proof}

It follows from the Lemma \ref{Lem1} that a Bernstein basis $p_{n,k}%
,k=0,...,n$ is necessarily a basis for $E_{\left(  \lambda_{0},...,\lambda
_{n}\right)  }.$ Next we want to show that the basis functions are unique up
to a factor (provided there exists such a basis).

\begin{proposition}
Suppose that there exists a Bernstein basis $p_{n,k},$ $k=0,...,n$ for the
space $E_{\left(  \lambda_{0},...,\lambda_{n}\right)  }$ and $a,b$. If $f\in
E_{\left(  \lambda_{0},...,\lambda_{n}\right)  }$ has a zero of order at least
$k_{0}$ in $a$ and of order at least $n-k_{0}$ at $b$ then there exists a
complex number $c$ such that
\[
f=c\cdot p_{n,k_{0}}.
\]
\end{proposition}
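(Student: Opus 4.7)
The plan is to combine Lemma \ref{Lem1} with a straightforward lower-triangular argument using the derivatives at $a$ (and symmetrically at $b$). Since $p_{n,0}$ has a zero of exact order $0$ at $a$, and more generally $p_{n,k}$ has a zero of exact order $k$ at $a$, Lemma \ref{Lem1} applies and gives that the Bernstein-like system $p_{n,0},\ldots,p_{n,n}$ is a basis of $E_{(\lambda_0,\ldots,\lambda_n)}$. Hence any $f$ in the space admits a unique expansion
\[
f=\sum_{k=0}^{n}c_{k}\,p_{n,k}.
\]
The task reduces to showing $c_{k}=0$ for all $k\neq k_{0}$.

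First I would exploit the vanishing at $a$. Because $p_{n,k}$ has a zero of exact order $k$ at $a$, the derivative $p_{n,k}^{(j)}(a)$ is zero for $j<k$ and nonzero for $j=k$. Evaluating the $j$-th derivative of the expansion at $a$ therefore collapses to
\[
f^{(j)}(a)=\sum_{k\le j}c_{k}\,p_{n,k}^{(j)}(a),
\]
a triangular system in the coefficients $c_{0},\ldots,c_{j}$ whose diagonal entries $p_{n,k}^{(k)}(a)$ are nonzero. Using the hypothesis that $f$ has a zero of order at least $k_{0}$ at $a$, i.e.\ $f^{(j)}(a)=0$ for $j=0,\ldots,k_{0}-1$, a straightforward induction on $j$ forces $c_{0}=c_{1}=\cdots=c_{k_{0}-1}=0$.

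Next I would apply the exact same argument at $b$, reading off the expansion in reverse order. Since $p_{n,k}$ has a zero of exact order $n-k$ at $b$, the derivative $p_{n,k}^{(j)}(b)$ vanishes for $j<n-k$ and is nonzero for $j=n-k$. The hypothesis that $f$ vanishes to order at least $n-k_{0}$ at $b$ gives $f^{(j)}(b)=0$ for $j=0,\ldots,n-k_{0}-1$, and the analogous triangular induction (now running from $k=n$ downwards) forces $c_{n}=c_{n-1}=\cdots=c_{k_{0}+1}=0$.

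Combining these two cascades leaves only $c_{k_{0}}$ possibly nonzero, so $f=c_{k_{0}}\,p_{n,k_{0}}$ with $c=c_{k_{0}}$, as required. There is no real obstacle here; the only thing to be careful about is bookkeeping the exact-order hypothesis on $p_{n,k}$ (so that the triangular diagonal entries are nonzero and the induction actually propagates), which is precisely what the definition of Bernstein-like basis guarantees. The identity (\ref{eqLim}) is not strictly needed, but can be invoked if one prefers to read off $p_{n,k}^{(k)}(a)$ and $p_{n,k}^{(n-k)}(b)$ as limits of the normalized quotients.
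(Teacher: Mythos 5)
Your proof is correct and follows essentially the same route as the paper: expand $f$ in the Bernstein basis (justified by Lemma \ref{Lem1}), then use the triangular vanishing pattern of the derivatives at $a$ to eliminate $c_{0},\dots,c_{k_{0}-1}$ and the reverse pattern at $b$ to eliminate $c_{n},\dots,c_{k_{0}+1}$. The paper's "proceed inductively" is exactly your triangular-system induction, so there is nothing to add.
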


\begin{proof}
Since $p_{n,k},$ $k=0,...,n$ is a basis we can find complex numbers
$c_{0},...,c_{n}$ such that
\[
f=c_{0}p_{n,0}+...+c_{n}p_{n,n}.
\]
We know that $f\left(  a\right)  =...=f^{\left(  k_{0}-1\right)  }\left(
a\right)  =0$ since $f$ has an order of exact order $k_{0}$ in $a.$ So we see
that $0=f\left(  a\right)  =c_{0}p_{n,0}\left(  a\right)  .$ Since
$p_{n,0}\left(  a\right)  \neq0$ we obtain $c_{0}=0.$ We proceed inductively
and obtain that
\[
f=c_{k_{0}}p_{n,k_{0}}+....+c_{n}p_{n,n}.
\]
Now we use the zeros at $b:$ Inserting $x=b$ yields
\[
0=f\left(  b\right)  =c_{n}p_{n,n}\left(  b\right)  .
\]
Since $p_{n,n}\left(  b\right)  \neq0$ (here we use the exact order at the
point $b$) it follows that $c_{n}=0.$ Proceeding inductively one obtains
$f=c_{k_{0}}p_{n,k_{0}}.$
\end{proof}

The proof actually shows the following:

\begin{corollary}
Suppose that there exists a Bernstein basis $p_{n,k},$ $k=0,...,n$ for the
space $E_{\left(  \lambda_{0},...,\lambda_{n}\right)  }$ and $a,b$. If $f\in
E_{\left(  \lambda_{0},...,\lambda_{n}\right)  }$ has more than $n$ zeros in
$\left\{  a,b\right\}  $ then $f=0.$
\end{corollary}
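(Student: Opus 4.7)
The plan is to reorganize the argument of the preceding Proposition with careful bookkeeping of the vanishing orders. Let $m_a$ and $m_b$ denote the orders of vanishing of $f$ at $a$ and at $b$ respectively, so the hypothesis becomes $m_a + m_b \geq n+1$. By Lemma \ref{Lem1} the functions $p_{n,0},\ldots,p_{n,n}$ form a basis of $E_{(\lambda_0,\ldots,\lambda_n)}$, so I would expand $f = \sum_{k=0}^{n} c_k p_{n,k}$, and the goal reduces to showing that every $c_k$ vanishes.

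The first step is to exploit the zeros at $a$ exactly as in the proof of the Proposition: evaluate $f(a), f'(a), \ldots$ in succession. Since $p_{n,k}$ has a zero of exact order $k$ at $a$, the $j$-th derivative identity at $a$ isolates $c_j\, p_{n,j}^{(j)}(a) = 0$ once $c_0, \ldots, c_{j-1}$ are known to vanish; induction on $j$ from $0$ up to $\min(m_a, n+1) - 1$ therefore forces $c_0 = c_1 = \ldots = c_{m_a-1} = 0$. If $m_a \geq n+1$ the cascade already kills every $c_k$ and we are finished, so one may as well assume $m_a \leq n$. The second step is the symmetric use of the zeros at $b$: the analogous cascading argument in reverse order, using that $p_{n,k}$ has a zero of exact order $n-k$ at $b$, yields $c_n = c_{n-1} = \ldots = c_{n-m_b+1} = 0$.

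The crucial combinatorial observation is that the assumption $m_a + m_b \geq n+1$ is precisely what guarantees that the two sets of indices $\{0, 1, \ldots, m_a - 1\}$ and $\{n - m_b + 1, \ldots, n\}$ together cover all of $\{0, 1, \ldots, n\}$, with no gap left uncovered. Consequently every $c_k$ vanishes and $f \equiv 0$, as required. I do not anticipate any genuine obstacle: the argument is essentially a bookkeeping check layered on top of the proof just given for the Proposition, with the only care needed being the boundary case $m_a \geq n+1$ (or symmetrically $m_b \geq n+1$), which is handled automatically by running the cascade only as long as valid indices remain.
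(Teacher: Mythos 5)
Your argument is correct and is exactly what the paper intends: the corollary is stated with the remark that ``the proof [of the preceding Proposition] actually shows'' it, i.e.\ the same two cascades of derivative evaluations at $a$ and at $b$, which under the hypothesis $m_a+m_b\geq n+1$ overlap and annihilate every coefficient $c_k$. Your explicit bookkeeping of the index ranges $\{0,\ldots,m_a-1\}$ and $\{n-m_b+1,\ldots,n\}$ is precisely the detail the paper leaves implicit, so this is the same proof.
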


Let us recall the following definition:

\begin{definition}
The space $E_{\left(  \lambda_{0},...,\lambda_{n}\right)  }$ is a Chebyshev
system with respect to the set $A\subset\mathbb{C}$ if each function $f\in
E_{\left(  \lambda_{0},...,\lambda_{n}\right)  }$ that has more than $n$ zeros
in $A$ (including multiplicities) is $0.$
\end{definition}

So we have seen that a necessary condition for the existence of a Bernstein
basis on the interval $\left[  a,b\right]  $ is the property that $E_{\left(
\lambda_{0},...,\lambda_{n}\right)  }$ is a Chebyshev system for the
\emph{set} $\left\{  a,b\right\}  .$ In the next Proposition we shall show
that this property is actually equivalent to the existence.

\begin{proposition}
\label{PropExist}The space $E_{\left(  \lambda_{0},...,\lambda_{n}\right)  }$
is an extended Chebyshev system with respect to $a,b$ if and only if there
exists a (unique) Bernstein-like basis $p_{n,k}:k=0,...,n$, for $a,b$
satisfying the condition
\begin{equation}
k!\lim_{x\rightarrow a,x>a}\frac{p_{n,k}\left(  x\right)  }{\left(
x-a\right)  ^{k}}=p_{n,k}^{\left(  k\right)  }\left(  a\right)  =1.
\label{eqnorm}%
\end{equation}
The Bernstein-like basis functions $p_{n,k}\left(  x\right)  $ are recursively
defined by equations (\ref{eqBP1}), (\ref{eqBP2}), (\ref{eqq}) and
(\ref{eqBP3}) below.
\end{proposition}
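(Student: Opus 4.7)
The plan is to treat the two directions of the equivalence separately and then address the recursive construction. For \emph{necessity}, if a Bernstein-like basis $p_{n,k}$, $k=0,\dots,n$, exists, then the Corollary just above shows at once that any $f\in E_{(\lambda_{0},\dots,\lambda_{n})}$ with more than $n$ zeros in $\{a,b\}$ must vanish identically, which is precisely the extended Chebyshev property on the two-point set $\{a,b\}$. Uniqueness under the normalization (\ref{eqnorm}) is essentially the Proposition preceding that Corollary: any $f$ with zero of order $\geq k$ at $a$ and $\geq n-k$ at $b$ is a scalar multiple of $p_{n,k}$, and the limit identity (\ref{eqLim}) converts the normalization $p_{n,k}^{(k)}(a)=1$ into a single scalar constraint that fixes the constant.

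For \emph{sufficiency} my main tool is a linear-algebra dimension count. Consider the evaluation map
\[
T_{k}\colon E_{(\lambda_{0},\dots,\lambda_{n})}\longrightarrow\mathbb{C}^{n},\qquad f\longmapsto\bigl(f(a),\dots,f^{(k-1)}(a),\,f(b),\dots,f^{(n-k-1)}(b)\bigr).
\]
Its domain has dimension $n+1$, so $\dim\ker T_{k}\geq 1$. If the kernel had dimension $\geq 2$, imposing one additional linear constraint such as $f^{(k)}(a)=0$ would still leave a nonzero element, but that element would have more than $n$ zeros in $\{a,b\}$, contradicting the Chebyshev hypothesis. Hence $\dim\ker T_{k}=1$. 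Let $p_{n,k}$ span the kernel; Chebyshev again forces the orders to be \emph{exactly} $k$ at $a$ and $n-k$ at $b$, since otherwise they would sum to more than $n$. Finally (\ref{eqnorm}) fixes the scalar via (\ref{eqLim}). The extreme case $k=n$ admits a closed form: one may take $p_{n,n}(x)=\Phi_{(\lambda_{0},\dots,\lambda_{n})}(x-a)$, since $\Phi$ has zero of exact order $n$ at $0$ with $\Phi^{(n)}(0)=1$, while Chebyshev gives $\Phi(b-a)\neq 0$.

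For the explicit recursion formulas (\ref{eqBP1}), (\ref{eqBP2}), (\ref{eqq}), (\ref{eqBP3}) I would verify by direct computation that their right-hand sides define functions in $E_{(\lambda_{0},\dots,\lambda_{n})}$ with the required zero orders and normalization, and then invoke the uniqueness above to identify them with the $p_{n,k}$ just constructed. A prototypical step uses integration against $e^{\lambda_{n}(x-t)}$: if $q$ is a Bernstein-basis element of $E_{(\lambda_{0},\dots,\lambda_{n-1})}$ with zero of exact order $k-1$ at $a$, then
\[
\tilde p(x):=\int_{a}^{x}e^{\lambda_{n}(x-t)}\,q(t)\,dt
\]
satisfies $(d/dx-\lambda_{n})\tilde p=q$ and hence lies in $E_{(\lambda_{0},\dots,\lambda_{n})}$; differentiating this ODE repeatedly and evaluating at $x=a$ yields a zero of exact order $k$ at $a$. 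The hard part is pinning down the order at the opposite endpoint $b$, since it is not visible from the ODE: one must combine the Chebyshev upper bound (total zero count $\leq n$) with the dimension-count lower bound coming from $T_{k}$ to force equality, and this is the step where the hypothesis is genuinely used.
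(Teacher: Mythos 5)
Your argument is correct, but the sufficiency direction takes a genuinely different route from the paper. The paper proves existence \emph{constructively}: it sets $q_{0}(x):=\Phi_{\Lambda_{n}}(x-a)$, which has a zero of exact order $n$ at $a$ and (by the Chebyshev hypothesis) none at $b$, and then builds $q_{k}=p_{n,n-k}$ from $q_{k-1}$ and $q_{k-2}$ by the derivative-based recursion (\ref{eqq}), choosing the coefficients (\ref{eqBP3}) to kill the $(k-2)$-th and $(k-1)$-th derivatives at $b$; the Chebyshev hypothesis then upgrades ``order at least'' to ``exact order'' at each step, and the recursion formulas asserted in the Proposition come for free. You instead get existence abstractly from the evaluation map $T_{k}$ and the count $\dim\ker T_{k}=1$, which is cleaner and makes the uniqueness transparent, but it leaves the recursive-definition clause of the Proposition as a separate verification that you only sketch. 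Your plan for that verification (check membership in $E_{(\lambda_{0},\dots,\lambda_{n})}$, the zero orders at $a$ and $b$, and the normalization, then invoke uniqueness) is the right one and is essentially what the paper's construction amounts to; however, your ``prototypical step'' via $\int_{a}^{x}e^{\lambda_{n}(x-t)}q(t)\,dt$ is aimed at a different kind of recursion (it passes from $E_{(\lambda_{0},\dots,\lambda_{n-1})}$ to $E_{(\lambda_{0},\dots,\lambda_{n})}$, as in Proposition \ref{PropABL}), whereas (\ref{eqq}) stays inside the fixed space $E_{(\lambda_{0},\dots,\lambda_{n})}$ and uses only differentiation, under which that space is closed. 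Your closed form $p_{n,n}(x)=\Phi_{(\lambda_{0},\dots,\lambda_{n})}(x-a)$ agrees with the paper's starting point (\ref{eqBP2}), and your identification of where the Chebyshev hypothesis is genuinely used --- forcing exactness of the zero orders so that they sum to exactly $n$ --- matches the paper.
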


\begin{proof}
The necessity was already proved. Assume now that $E_{\left(  \lambda
_{0},...,\lambda_{n}\right)  }$ is an extended Chebyshev system for $a,b.$ It
is convenient to use the following notation:
\begin{equation}
q_{k}\left(  x\right)  :=p_{n,n-k}\left(  x\right)  , \label{eqBP1}%
\end{equation}
so $q_{k}$ has a zero of order $n-k$ at $a$ and a zero of order $k$ at $b$.
Define first
\begin{equation}
q_{0}\left(  x\right)  :=\Phi_{\Lambda_{n}}\left(  x-a\right)  , \label{eqBP2}%
\end{equation}
which clearly has a zero of order $n$ at $a$ and of order at least $0$ at $b.
$ Since $q_{0}$ can not have more than $n$ zeros on $a,b$ by our assumption we
infer that $q_{0}$ has a zero of order $0$ at $b.$ Define $q_{1}%
:=q_{0}^{\left(  1\right)  }-\alpha_{0}q_{0}$ for $\alpha_{0}=q_{0}^{\left(
1\right)  }\left(  b\right)  /q_{0}\left(  b\right)  $ which has a zero of
order at least $n-1$ at $a$ and a zero of order at least $1$ at $b.$ Again our
assumption implies that $q_{1}$ has a zero of order at $n-1 $ at $a$ and a
zero of order $1$ at $b.$ For $k\geq2$ we define $q_{k}$ recursively by
\begin{equation}
q_{k}:=q_{k-1}^{\left(  1\right)  }-\left(  \alpha_{k-1}-\alpha_{k-2}\right)
\cdot q_{k-1}-\beta_{k}q_{k-2} \label{eqq}%
\end{equation}
with coefficients $\alpha_{k-1}-\alpha_{k-2}$ and $\beta_{k}$ to be
determined. By construction we know already that $q_{k-1}$ and $q_{k-2}$
respectively have a zero of order $k-1$ and $k-2$ at $b$, and a zero of order
$n-k+1$ and $n-k+2$ at $a$. So it is clear that $q_{k}$ has a zero of order at
least $k-2$ at $b,$ and a zero of order at least $n-k$ at $a.$ The
coefficients $\alpha_{k-1}-\alpha_{k-2}$ and $\beta_{k}$ are chosen in such a
way that $q_{k}$ will have a zero of order at least $k$ in $b.$ This is
achieved by defining
\begin{equation}
\beta_{k}:=\frac{q_{k-1}^{\left(  k-1\right)  }\left(  b\right)  }%
{q_{k-2}^{\left(  k-2\right)  }\left(  b\right)  }\text{ and }\alpha
_{k-1}:=\frac{q_{k-1}^{\left(  k\right)  }\left(  b\right)  }{q_{k-1}^{\left(
k-1\right)  }\left(  b\right)  }. \label{eqBP3}%
\end{equation}
Again our assumption implies that $q_{k}$ has a zero of order $k$ at $b$ and a
zero of order $n-k$ at $a.$ The condition (\ref{eqnorm}) is easily checked
using (\ref{eqLim}), and (\ref{eqq}) together with induction. The uniqueness
property follows from the above remarks.
\end{proof}

In the rest of the paper we shall call the Bernstein-like basis provided by
Proposition \ref{PropExist} \emph{the Bernstein basis with respect to} $a,b.$

Next we shall give a construction of the Bernstein basis $p_{n,n-k}\left(
x\right)  ,$ $k=0,...,n,$ will is similar to constructions known from the
theory of Chebyshev spaces.

\begin{theorem}
Let $\left(  \lambda_{0},...,\lambda_{n}\right)  \in\mathbb{C}^{n+1}$ and
define for each $k=0,...,n$ the $\left(  k+1\right)  \times\left(  k+1\right)
$ matrix $A_{n,k}\left(  x\right)  $ by
\begin{equation}
A_{n,k}\left(  x\right)  :=\left(
\begin{array}
[c]{ccc}%
\Phi_{\Lambda_{n}}\left(  x\right)  & ... & \Phi_{\Lambda_{n}}^{\left(
k\right)  }\left(  x\right) \\
\vdots &  & \vdots\\
\Phi_{\Lambda_{n}}^{\left(  k\right)  }\left(  x\right)  & ... & \Phi
_{\Lambda_{n}}^{\left(  2k\right)  }\left(  x\right)
\end{array}
\right)  . \label{defAnk}%
\end{equation}
Then the matrices $A_{n,k}\left(  b-a\right)  $ are invertible for $k=0,...,n$
if and only if $E_{\left(  \lambda_{0},...,\lambda_{n}\right)  }$ is an
extended Chebyshev system with respect to $\left\{  a,b\right\}  .$
\end{theorem}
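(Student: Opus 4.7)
My plan is to translate the Chebyshev condition into linear-algebraic terms by exhibiting a concrete basis of $E_{\Lambda_{n}}$ adapted to the point $a$. I would set $q_{0}(x) := \Phi_{\Lambda_{n}}(x-a)$, which lies in $E_{\Lambda_{n}}$ by translation invariance of the kernel of $L$ and has a zero of exact order $n$ at $a$. Because $L$ has constant coefficients, the derivatives $q_{0}, q_{0}^{(1)}, \ldots, q_{0}^{(n)}$ also lie in $E_{\Lambda_{n}}$, and since $q_{0}^{(k)}$ has a zero of order exactly $n-k$ at $a$, Lemma \ref{Lem1} then gives that these $n+1$ functions form a basis of $E_{\Lambda_{n}}$.

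For each $k \in \{0,\ldots,n\}$ I would next identify the subspace $V_{k} \subset E_{\Lambda_{n}}$ of those $f$ with a zero of order $\geq n-k$ at $a$. This $V_{k}$ is the kernel of the evaluation map $f \mapsto (f(a),\ldots,f^{(n-k-1)}(a))$ from $E_{\Lambda_{n}}$ to $\mathbb{C}^{n-k}$, which is surjective by uniqueness and existence for the Cauchy problem of $Lf=0$; hence $\dim V_{k} = k+1$, so $q_{0}, q_{0}', \ldots, q_{0}^{(k)}$ form a basis of $V_{k}$. Then $A_{n,k}(b-a)$ emerges naturally: for $f = \sum_{j=0}^{k} c_{j} q_{0}^{(j)} \in V_{k}$ one has $f^{(m)}(b) = \sum_{j=0}^{k} c_{j}\,\Phi_{\Lambda_{n}}^{(j+m)}(b-a)$, so the condition that $f$ vanish at $b$ to order $\geq k+1$ (i.e.\ $f^{(m)}(b)=0$ for $m=0,\ldots,k$) is exactly $A_{n,k}(b-a)\,c = 0$. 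Therefore $A_{n,k}(b-a)$ is invertible if and only if no nonzero $f \in E_{\Lambda_{n}}$ simultaneously vanishes to order $\geq n-k$ at $a$ and to order $\geq k+1$ at $b$.

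To finish, I would observe that the Chebyshev property with respect to $\{a,b\}$ is, by monotonicity in the vanishing orders, equivalent to the nonexistence of such an $f$ for each splitting $(n+1-\ell,\ell)$ with $\ell=1,\ldots,n$, i.e.\ to the invertibility of $A_{n,k}(b-a)$ for $k=0,\ldots,n-1$. The remaining case $k=n$ asks whether any nonzero element of $E_{\Lambda_{n}}$ can vanish at $b$ to order $n+1$, which is ruled out automatically by Cauchy uniqueness at $b$; so $A_{n,n}(b-a)$ is always invertible, and the equivalence in the theorem drops out. The only mildly delicate step is the dimension count for $V_{k}$ via Cauchy uniqueness for $Lf=0$; the remainder is linear-algebraic bookkeeping, with the only pitfalls being the edge cases $k=n$ and the complementary splitting $(n+1,0)$, which corresponds to no matrix at all.
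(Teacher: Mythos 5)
Your argument is correct, and its computational core -- realizing $A_{n,k}(b-a)$ as the matrix of the conditions $f^{(m)}(b)=0$, $m=0,\dots,k$, on the $(k+1)$-dimensional space spanned by $\Phi_{\Lambda_n}(\cdot-a),\dots,\Phi_{\Lambda_n}^{(k)}(\cdot-a)$ -- is exactly the identification the paper uses. The direction ``Chebyshev $\Rightarrow$ invertible'' is essentially the paper's proof: a nonzero kernel vector of $A_{n,k}(b-a)$ yields a nonzero function with $n+1$ zeros split as $(n-k,\,k+1)$ between $a$ and $b$. Where you genuinely diverge is the converse. The paper takes a constructive detour: assuming all the matrices invertible, it solves the linear system governed by $A_{n,k-1}(b-a)$ to build the Bernstein basis function $p_{n,n-k}$ explicitly (using invertibility of $A_{n,k}(b-a)$ to guarantee that the zero at $b$ has \emph{exact} order $k$), and then invokes the earlier corollary that existence of a Bernstein basis forces the Chebyshev property. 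You instead decompose the Chebyshev condition over the possible splittings $(p,q)$ with $p+q=n+1$ of the zero count between $a$ and $b$, note that the two extreme splittings are ruled out by Cauchy uniqueness for $Lf=0$, and match each remaining splitting to the invertibility of one $A_{n,k}(b-a)$. Your route is shorter and more symmetric, and it isolates cleanly why $A_{n,n}(b-a)$ is automatically invertible -- a case the theorem's statement includes but the paper's proof treats only implicitly. What the paper's route buys is the explicit representation $p_{n,n-k}(x)=\frac{1}{2\pi i}\int r_{\Lambda_n,k}(z)e^{(x-a)z}\bigl((z-\lambda_0)\cdots(z-\lambda_n)\bigr)^{-1}dz$ with $r_{\Lambda_n,k}$ monic of degree $k$, which is recorded as a separate theorem immediately afterwards and exploited later; your existence-only argument would not deliver that formula without redoing the construction. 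One minor bookkeeping point: Lemma \ref{Lem1} is stated for functions $f_k$ having a zero of order $k$ at $a$, so you should reindex $f_k=q_0^{(n-k)}$ before citing it; the dimension count $\dim V_k=k+1$ via surjectivity of the truncated Cauchy data map is fine.
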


\begin{proof}
Assume that the matrices $A_{n,k}\left(  b-a\right)  $ are invertible for
$k=0,...,n.$ It suffices to show that there exists a Bernstein basis with
respect to $a,b.$ For a polynomial $r_{\Lambda_{n},k}\left(  z\right)
=r_{k}z^{k}+r_{k-1}z^{k-1}+...+r_{0}$ with coefficients $r_{0},...,r_{k}%
\in\mathbb{C}$ and $r_{k}=1$ let us define
\begin{equation}
f_{n,k}\left(  x\right)  :=\frac{1}{2\pi i}\int\frac{r_{\Lambda_{n},k}\left(
z\right)  e^{\left(  x-a\right)  z}}{\left(  z-\lambda_{0}\right)  ...\left(
z-\lambda_{n}\right)  }dz. \label{deffnk}%
\end{equation}
It is easy to see that $f_{n,k}$ is in $E_{\left(  \lambda_{0},...,\lambda
_{n}\right)  }$ since
\[
f_{n,k}\left(  x\right)  =\sum_{j=0}^{k}r_{j}\Phi_{\Lambda_{n}}^{\left(
j\right)  }\left(  x-a\right)
\]
From this representation it follows that $f_{n,k}$ has of order at $n-k$ at
$a,$ and that $f^{\left(  n-k\right)  }\left(  a\right)  =r_{k}\Phi
_{\Lambda_{n}}^{\left(  n\right)  }\left(  x-a\right)  =1.$ We want to choose
the coefficients $r_{0},...,r_{k-1}$ such that
\[
\left(  \frac{d^{l}}{dx^{l}}f_{n,k}\right)  \left(  b\right)  =0\text{ for all
}l=0,...,k-1
\]
and that $\left(  \frac{d^{k}}{dx^{k}}f_{n,k}\right)  \left(  b\right)
\neq0.$ Writing down these equations for $l=0,...,k-1$ in matrix form shows
that for $c=b-a$
\[
\left(
\begin{array}
[c]{ccc}%
\Phi_{\Lambda_{n}}\left(  c\right)  & ... & \Phi_{\Lambda_{n}}^{\left(
k-1\right)  }\left(  c\right) \\
\vdots &  & \vdots\\
\Phi_{\Lambda_{n}}^{\left(  k-1\right)  }\left(  c\right)  & ... &
\Phi_{\Lambda_{n}}^{\left(  2k-2\right)  }\left(  c\right)
\end{array}
\right)  \left(
\begin{array}
[c]{c}%
r_{0}\\
\vdots\\
r_{k-1}%
\end{array}
\right)  =-\left(
\begin{array}
[c]{c}%
\Phi_{\Lambda_{n}}^{\left(  k\right)  }\left(  c\right) \\
\vdots\\
\Phi_{\Lambda_{n}}^{\left(  2k-2\right)  }\left(  c\right)
\end{array}
\right)
\]
Since the matrix $A_{n,k-1}\left(  b-a\right)  $ is invertible we can find
clearly $r_{0},...,r_{k-1}\in\mathbb{C}$ which solves the equation. Hence
$f_{n,k}$ has a zero of order at $n-k$ at $a$ and a zero of order at least $k$
in $b.$ Suppose now that $f\left(  b\right)  =....=f_{n,k}^{\left(  k\right)
}\left(  b\right)  =0.$ Then these equations say that
\[
\left(
\begin{array}
[c]{ccc}%
\Phi_{\Lambda_{n}}\left(  c\right)  & ... & \Phi_{\Lambda_{n}}^{\left(
k\right)  }\left(  c\right) \\
\vdots &  & \vdots\\
\Phi_{\Lambda_{n}}^{\left(  k\right)  }\left(  c\right)  & ... & \Phi
_{\Lambda_{n}}^{\left(  2k\right)  }\left(  c\right)
\end{array}
\right)  \left(
\begin{array}
[c]{c}%
r_{0}\\
\vdots\\
r_{k}%
\end{array}
\right)  =0.
\]
Since the matrix $A_{n,k}\left(  b-a\right)  $ is invertible it follows that
$r_{0}=....=r_{k}=0.$ This is a contradiction to the choice $r_{k}=1.$

Now assume that $E_{\left(  \lambda_{0},...,\lambda_{n}\right)  }$ is an
extended Chebyshev system with respect to $\left\{  a,b\right\}  $. Suppose
that the matrix $A_{n,k}\left(  b-a\right)  $ is not invertible for some
$k\in\left\{  0,...,n\right\}  .$ Then we can find $s=(s_{0},...,s_{k})\neq0$
such that $A_{n,k}\left(  b-a\right)  s=0.$ Define $f_{n,k}\left(  x\right)
=\sum_{j=0}^{k}s_{j}\Phi_{\Lambda_{n}}^{\left(  j\right)  }\left(  x-a\right)
.$ Then clearly $f_{n,k}$ has a zero of order at least $n-k$ at $a.$ Further
the equation $A_{n,k}\left(  b-a\right)  s=0$ implies that $f_{n,k}^{\left(
j\right)  }\left(  b\right)  =0$ for $j=0,...,k.$ So $f_{n,k}$ has a zero of
order $n+1$ in the set $\left\{  a,b\right\}  ,$ and our assumption implies
that $f_{n,k}=0.$ By Lemma \ref{Lem1} the system $\Phi_{\Lambda_{n}}%
,....,\Phi_{\Lambda_{n}}^{\left(  n\right)  }$ is a basis of $E_{\left(
\lambda_{0},...,\lambda_{n}\right)  }.$ It follows that $s_{0}=...=s_{n}=0,$ a contradiction.
\end{proof}

The proof of the last theorem actually shows:

\begin{theorem}
Assume that the matrices $A_{n,k}\left(  b-a\right)  $ are invertible for
$k=1,...,n+1$, and let $p_{n,n-k},k=0,...,n$ be the Bernstein basis with
respect to $a,b.$ Then for each $k=0,...,n$ there exists a polynomial
$r_{\Lambda_{n},k}\left(  z\right)  $ of degree $k$ and leading coefficient
$1$ such that
\[
p_{n,n-k}\left(  x\right)  =\frac{1}{2\pi i}\int\frac{r_{\Lambda_{n},k}\left(
z\right)  e^{\left(  x-a\right)  z}}{\left(  z-\lambda_{0}\right)  ...\left(
z-\lambda_{n}\right)  }dz
\]
for all $x\in\mathbb{R}.$
\end{theorem}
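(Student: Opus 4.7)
The plan is to show that the function $f_{n,k}$ constructed inside the proof of the preceding theorem coincides with $p_{n,n-k}$, so that the integral formula already produced for $f_{n,k}$ becomes the desired representation.

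Recall from that proof that, under the invertibility hypothesis, one sets $r_k := 1$ and then solves the linear system governed by $A_{n,k-1}(b-a)$ for coefficients $r_0, \ldots, r_{k-1} \in \mathbb{C}$, producing
\[
f_{n,k}(x) = \sum_{j=0}^{k} r_j \Phi_{\Lambda_n}^{(j)}(x-a) = \frac{1}{2\pi i} \int \frac{r_{\Lambda_n,k}(z)\, e^{(x-a)z}}{(z-\lambda_0) \cdots (z-\lambda_n)}\, dz,
\]
where $r_{\Lambda_n,k}(z) = z^k + r_{k-1} z^{k-1} + \cdots + r_0$ is monic of degree $k$. The integral form with a monic polynomial is therefore already at hand, and the entire task reduces to verifying $f_{n,k} = p_{n,n-k}$.

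Next I would pin down the zero structure of $f_{n,k}$. Using $\Phi_{\Lambda_n}^{(j)}(0) = 0$ for $j < n$ and $\Phi_{\Lambda_n}^{(n)}(0) = 1$ from Proposition \ref{PropTaylor}, one checks that $f_{n,k}^{(l)}(a) = 0$ for $l = 0, \ldots, n-k-1$ and $f_{n,k}^{(n-k)}(a) = r_k = 1$, so $f_{n,k}$ has a zero of exact order $n-k$ at $a$. The choice of $r_0, \ldots, r_{k-1}$ kills $f_{n,k}^{(l)}(b)$ for $l = 0, \ldots, k-1$, while the contradiction argument from the preceding proof (invoking invertibility of $A_{n,k}(b-a)$) forces $f_{n,k}^{(k)}(b) \neq 0$. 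Hence $f_{n,k}$ has a zero of exact order $n-k$ at $a$ and of exact order $k$ at $b$, which is precisely the defining property of $p_{n,n-k}$.

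By the uniqueness of the Bernstein basis up to a scalar (the proposition proved just before Proposition \ref{PropExist}) we conclude $f_{n,k} = c \cdot p_{n,n-k}$ for some $c \in \mathbb{C}$. Comparing the $(n-k)$-th derivative at $a$, and using the normalization $p_{n,n-k}^{(n-k)}(a) = 1$ from (\ref{eqnorm}) together with $f_{n,k}^{(n-k)}(a) = 1$ computed above, yields $c = 1$, so $p_{n,n-k} = f_{n,k}$ and the claimed integral representation follows. The only potentially delicate point is pinning down $c$, and it is handled automatically because the construction takes $r_k = 1$, which matches precisely the normalization built into the Bernstein basis; every other step is inherited directly from the preceding theorem's proof.
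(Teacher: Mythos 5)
Your proposal is correct and is essentially the argument the paper intends: the paper gives no separate proof, merely noting that ``the proof of the last theorem actually shows'' the claim, and your write-up correctly fleshes out that remark by identifying the constructed $f_{n,k}=\sum_{j=0}^{k}r_{j}\Phi_{\Lambda_{n}}^{\left(  j\right)  }\left(  x-a\right)  $ with $p_{n,n-k}$ via the exact zero orders at $a$ and $b$, the uniqueness-up-to-scalar proposition, and the normalization $f_{n,k}^{\left(  n-k\right)  }\left(  a\right)  =r_{k}=1$.
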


For a given vector $\Lambda_{n}=\left(  \lambda_{0},...,\lambda_{n}\right)  $
the matrices $A_{n,k}\left(  z\right)  ,k=0,...,n$ are defined in
(\ref{defAnk}) and we set
\[
Z_{\Lambda_{n}}:=\bigcup_{k=0}^{n}\left\{  z\in\mathbb{C}:\det A_{n,k}\left(
z\right)  =0\right\}  .
\]
Note that $Z_{\Lambda_{n}}$ is a discrete subset of $\mathbb{C}$ since
$z\longmapsto\det A_{n,k}\left(  z\right)  $ is obviously an entire function.
It follows that for given $a\in\mathbb{R}$ the space $E_{\left(  \lambda
_{0},...,\lambda_{n}\right)  }$ is an extended Chebyshev system for $\left\{
a,b\right\}  $ for all $b\in\mathbb{R}$ except a countable discrete subset of
$\mathbb{R}.$ We emphasize that this does not imply that $E_{\left(
\lambda_{0},...,\lambda_{n}\right)  }$ is an extended Chebyshev system for the
\emph{interval} $\left[  a,b\right]  .$ In order to have nice properties of
the basic function $p_{n,k}$ one need the assumption that $E_{\left(
\lambda_{0},...,\lambda_{n}\right)  }$ is an extended Chebyshev system for the
\emph{interval} $\left[  a,b\right]  .$ Indeed, the following result is well
known (at least in the polynomial case):

\begin{theorem}
Suppose that $E_{\left(  \lambda_{0},...,\lambda_{n}\right)  }$ is an extended
Chebyshev system for the interval $\left[  a,b\right]  $ with Bernstein basis
functions $p_{n,k},k=0,...,n.$ Assume that $E_{\left(  \lambda_{0}%
,...,\lambda_{n}\right)  }$ is closed under complex conjugation. Then the
basis functions $p_{n,k}\left(  x\right)  $ are strictly positive on $\left(
a,b\right)  $ for each $k=0,...,n.$ For each $k=1,...,n-1$ there exists a
unique $a<t_{k}<b$ such that
\begin{align*}
&  p_{n,k}\text{ is increasing on }\left[  a,t_{k}\right] \\
&  p_{n,k}\text{ is decreasing on }\left[  t_{k},b\right]  .
\end{align*}
So $p_{n,k}$ has exactly one relative maximum. The function $p_{n,n}$ is
either increasing or there exists $t_{0}\in\left(  a,b\right)  $ such that
$p_{n,n}$ is increasing on $\left[  a,t_{0}\right]  $ and decreasing on
$\left[  t_{0},b\right]  .$
\end{theorem}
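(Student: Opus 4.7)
The plan is to prove, in sequence, the positivity of each $p_{n,k}$ on $(a,b)$, the monotone-then-monotone structure for $1\le k\le n-1$, and then the special case $k=n$. For positivity, I would begin by using the hypothesis of closedness under complex conjugation together with Proposition \ref{PropExist}: since $\overline{p_{n,k}}$ lies in $E_{(\lambda_{0},\ldots,\lambda_{n})}$ and has the same zero multiplicities at the real endpoints $a,b$, the uniqueness clause yields $\overline{p_{n,k}}=c\,p_{n,k}$, and the normalization $p_{n,k}^{(k)}(a)=1$ forces $c=1$, so $p_{n,k}$ is real-valued. Because $p_{n,k}$ already has $k+(n-k)=n$ zeros in $\{a,b\}$ counted with multiplicity, the extended Chebyshev hypothesis \emph{on the closed interval} $[a,b]$ forbids any further zero in $(a,b)$; thus $p_{n,k}$ has constant sign there, and the positive leading Taylor coefficient $1/k!$ just above $a$ pins this sign down to positive.

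For the monotonicity on $1\le k\le n-1$, the crucial remark is that $p_{n,k}'\in E_{(\lambda_{0},\ldots,\lambda_{n})}$, because the constant-coefficient operator $L$ commutes with $d/dx$. Since the zero orders of $p_{n,k}$ at $a,b$ are now \emph{exactly} $k$ and $n-k$ (as established in the positivity step), $p_{n,k}'$ has zeros of exact orders $k-1$ at $a$ and $n-k-1$ at $b$, accounting for $n-2$ of its zeros. The extended Chebyshev property on $[a,b]$ therefore permits at most two further zeros of $p_{n,k}'$ in $(a,b)$, counted with multiplicity. I would then compute the two relevant leading Taylor coefficients: $p_{n,k}'(x)$ is positive just to the right of $a$ (from $p_{n,k}^{(k)}(a)=1>0$), while the expansion $p_{n,k}(x)\sim c(b-x)^{n-k}$ with $c>0$ (by positivity) gives $p_{n,k}'(x)\sim -c(n-k)(b-x)^{n-k-1}<0$ just to the left of $b$. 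Hence $p_{n,k}'$ must change sign an odd number of times on $(a,b)$. Tabulating the configurations compatible with at most two interior zeros---no zero (0 sign changes), one simple zero (1), one double zero (0), two simple zeros (2)---only ``one simple zero'' is consistent with odd parity. That zero is the desired $t_{k}$, with $p_{n,k}'>0$ on $(a,t_{k})$ and $p_{n,k}'<0$ on $(t_{k},b)$, giving the unique interior maximum.

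The case $k=n$ is handled by the same ideas with one detail adjusted: $p_{n,n}(b)\ne 0$, so $p_{n,n}'$ has a zero of exact order $n-1$ at $a$ and no forced zero at $b$, giving $n-1$ endpoint zeros and room for at most one further zero in the rest of $[a,b]$. The positivity of $p_{n,n}'$ just above $a$ means either no interior zero exists (in which case $p_{n,n}$ is increasing throughout $[a,b]$) or exactly one simple zero $t_{0}\in(a,b)$ across which $p_{n,n}'$ changes from positive to negative, producing the increasing-then-decreasing profile.

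The main obstacle is not any single deep step but a careful bookkeeping argument: one must distinguish ``zero of order at least $k$'' from ``zero of exact order $k$'' at every stage, and must invoke the extended Chebyshev hypothesis in its strong form on the \emph{closed interval} $[a,b]$ rather than merely on the two-point set $\{a,b\}$. Without that strong form one cannot bound the number of interior zeros of $p_{n,k}'$ by $2$, and the parity/enumeration argument that rules out every configuration except ``one simple zero'' would collapse.
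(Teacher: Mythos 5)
Your proposal is correct and follows essentially the same route as the paper: real-valuedness via closure under complex conjugation and zero counting, positivity from the extended Chebyshev property on $\left[ a,b\right]$ combined with the normalization $p_{n,k}^{\left( k\right) }\left( a\right) =1$, and then a count of the interior zeros of $p_{n,k}^{\prime }$ (at most two for $1\leq k\leq n-1$, at most one for $k=n$). The only difference is organizational: where the paper eliminates the configurations ``no interior zero'', ``two simple zeros'' and ``one double zero'' by three separate contradictions using the value $p_{n,k}\left( b\right) =0$, you compute that $p_{n,k}^{\prime }$ is positive just to the right of $a$ and negative just to the left of $b$ and dispose of all three cases at once by a parity argument; both versions are valid.
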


\begin{proof}
1. Consider $f_{n,k}\left(  x\right)  =p_{n,k}\left(  x\right)  -\overline
{p_{n,k}\left(  x\right)  }$ for $k=0,...,n.$ Then $f_{n,k}$ has a zero of
order at least $k+1$ in $a$ and a zero of order at least $n-k$ in $b.$
Moerover $f\in E_{\left(  \lambda_{0},...,\lambda_{n}\right)  }$ since
$E_{\left(  \lambda_{0},...,\lambda_{n}\right)  }$ is closed under complex
conjugation. As $E_{\left(  \lambda_{0},...,\lambda_{n}\right)  }$ is an
extended Chebyshev system for $\left\{  a,b\right\}  $ we infer $f_{n,k}=0$,
hence $p_{n,k}$ is real-valued.

2. Since $E_{\left(  \lambda_{0},...,\lambda_{n}\right)  }$ is an extended
Chebyshev system for the interval $\left[  a,b\right]  $ it follows that
$p_{n,k}$ has no zeros in the open interval $\left(  a,b\right)  .$ By the
norming condition $p_{n,k}^{\left(  k\right)  }\left(  a\right)  =1$ it
follows that $p_{n,k}$ is positive on $\left(  a,b\right)  .$ Since the
derivative $p_{n,k}^{\prime}$ is again in $E_{\left(  \lambda_{0}%
,...,\lambda_{n}\right)  }$ it follows that $p_{n,k}^{\prime}$ has at most $n$
zeros on $\left[  a,b\right]  .$

3. We assume that $1\leq k\leq n-1.$ Then $p_{n,k}^{\prime}$ has a zero order
$k-1\geq0$ at $a$ and a zero of order $n-k-1$ at $b.$ Hence $p_{n,k}^{\prime}$
has at most $2$ zeros in the open interval $\left(  a,b\right)  .$ First
assume that $p_{n,k}^{\prime}$ has two different zeros $t_{0}$ and $t_{1}$.
Then they must be simple. It follows that $p_{n,k}^{\prime}\left(  t\right)
>0$ for $t\in\left[  a,t_{0}\right]  $ (recall that $p_{n,k}$ is positive, so
it must increase at first). For $t\in\left[  t_{0},t_{1}\right]  $ we have
$p_{n,k}^{\prime}\left(  t\right)  <0$ since the zeros are simple. Finally we
have $p_{n,k}^{\prime}\left(  t\right)  >0$ for $t\in\left[  t_{1},b\right]
.$ So $p_{n,k}$ is increasing on $\left[  t_{1},b\right]  ,$ which implies
$p_{n,k}\left(  t_{1}\right)  \leq p_{n,k}\left(  b\right)  =0$ (recall that
$k\leq n-1).$ So $p_{n,k}$ has an additional zero in $t_{1}$ which is a
contradiction. Now assume that $p_{n,k}^{\prime}$ has a double zero at
$t_{0}.$ Since it has no further zeros $p_{n,k}^{\prime}\left(  t\right)  >0$
for all $t\in\left(  a,b\right)  , $ so $p_{n,k}$ is monotone increasing,
which gives a contradiction to the fact that $p_{n,k}\left(  b\right)  =0.$

In the next case we assume that $p_{n,k}^{\prime}$ has no zero in $\left(
a,b\right)  .$ Then $p_{n,k}$ is strictly increasing, so $p_{n,k}\left(
b\right)  >0,$ which gives a contradiction. So we see that $p_{n,k}^{\prime}$
has exactly one zero in $\left(  a,b\right)  .$

5. Since $p_{n,n}^{\prime}$ has a zero order $n-1$ at $a,$ it has at most one
zero in $\left(  a,b\right)  .$ If $p_{n,n}^{\prime}$ has a zero then
$p_{n,n}$ is increasing on $\left[  a,t_{0}\right]  $ and decreasing on
$\left[  t_{0},b\right]  .$ If $p_{n,n}^{\prime}$ has no zero then $p_{n,n}$
is increasing.
\end{proof}

We mention that $E_{\left(  \lambda_{0},...,\lambda_{n}\right)  }$ is an
extended Chebyshev system over \emph{intervals} $\left[  a,b\right]  $ whose
length $b-a$ is sufficiently small. Moreover, for \emph{real}
\emph{eigenvalues} $\lambda_{0},...,\lambda_{n}$ the space $E_{\left(
\lambda_{0},...,\lambda_{n}\right)  }$ is an extended Chebyshev system over
\emph{any} interval $\left[  a,b\right]  .$

Simple experiments show that the assumption of a Chebyshev system over the
interval $\left[  a,b\right]  $ are essential. In the case of the six
eigenvalues
\[
\pm7i,\pm i\left(  7-\pi\right)  ,\pm i.
\]
we have found basis function $p_{n,k}$ for the interval $\left[  0,3\right]  $
or $\left[  0,3.14\right]  $ with several relative maxima. Even it may be
possible that $p_{n,k}$ are non-negative (but they are always real-valued).
The fundamental function $\varphi_{6}\left(  x\right)  $ has its first zero at $3.2.$

\section{\label{S5}Recurrence relations of the Bernstein basis}

Let us recall that $\Lambda_{2s+1}:=\left(  s+1\text{ times }\lambda,s+1\text{
times }\mu\right)  $ and let us introduce the following notations
\begin{align*}
\Lambda_{2s+}  &  :=\left(  s+1\text{ times }\lambda,s\text{ times }%
\mu\right)  ,\\
\Lambda_{2s-}  &  :=\left(  s\text{ times }\lambda,s+1\text{ times }%
\mu\right)  .
\end{align*}
In this section we want to give recursive formula for the Bernstein basis
\[
p_{2s+1,k}:=p_{\Lambda_{2s+1},k}\text{ and }p_{2s+,k}:=p_{\Lambda_{2s+},k}.
\]

\begin{theorem}
\label{ThmR1}For each $k=0,...,2s-1$ the following recursion formula holds
\[
A_{\pm}p_{2s+1,k+2}\left(  x\right)  =x\cdot p_{2s-1,k}-\left(  k+1\right)
\cdot p_{2s\pm,k+1}\left(  x\right)
\]
where the constant $A_{\pm}$ is given through
\[
A_{\pm}=\left(  k+2\right)  p_{2s-1,k}^{\left(  k+1\right)  }\left(  0\right)
-\left(  k+1\right)  p_{2s\pm,k+1}^{\left(  k+2\right)  }\left(  0\right)  .
\]
\end{theorem}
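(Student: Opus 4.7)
The plan is to identify the right-hand side as a scalar multiple of $p_{2s+1,k+2}$ by showing it belongs to $E_{\Lambda_{2s+1}}$ with the correct vanishing orders at $a$ and $b$, then to recover the scalar $A_{\pm}$ from one derivative evaluation at $a=0$. Throughout I take $a=0$, consistent with the identification $p_{2s+1,2s+1}=\Phi_{2s+1}$ used in the proof of Theorem \ref{Thmneuid}.

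The first step is to verify that every summand on the right lies in $E_{\Lambda_{2s+1}}$. The inclusion $p_{2s\pm,k+1}\in E_{\Lambda_{2s\pm}}\subset E_{\Lambda_{2s+1}}$ is immediate since $\Lambda_{2s\pm}$ is a sub-multiset of $\Lambda_{2s+1}$. For $x\cdot p_{2s-1,k}(x)$ I would note that $E_{\Lambda_{2s-1}}$ is spanned by the functions $x^{j}e^{\lambda x}$ and $x^{j}e^{\mu x}$ with $0\leq j\leq s-1$, so multiplication by $x$ produces a linear combination of $x^{j}e^{\lambda x}$ and $x^{j}e^{\mu x}$ with $1\leq j\leq s$, and these lie inside $E_{\Lambda_{2s+1}}$.

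Next I would count vanishing orders of the right-hand side at each endpoint. At $b$ both $p_{2s-1,k}$ and $p_{2s\pm,k+1}$ vanish to order $2s-k-1$, and the factor $x=b\neq 0$ does not disturb this, so the right-hand side vanishes to order at least $2s-k-1$ at $b$. At $a=0$ each summand vanishes a priori only to order $k+1$, so the key step is to show that the $(k+1)$-st Taylor coefficient of the right-hand side cancels. This is a one-line Leibniz computation: the $(k+1)$-st derivative of $x\cdot p_{2s-1,k}(x)$ at $0$ equals $(k+1)\cdot p_{2s-1,k}^{(k)}(0)=k+1$, while the $(k+1)$-st derivative of $(k+1)p_{2s\pm,k+1}$ at $0$ equals $(k+1)\cdot p_{2s\pm,k+1}^{(k+1)}(0)=k+1$, using the normalization $(\ref{eqnorm})$ in both cases. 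The difference is zero, so the right-hand side vanishes to order at least $k+2$ at $0$, giving a combined $(k+2)+(2s-k-1)=2s+1$ zeros on $\{a,b\}$ inside the $(2s+2)$-dimensional space $E_{\Lambda_{2s+1}}$.

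Now I would apply the uniqueness proposition from Section \ref{S4} which says that any element of $E_{\Lambda_{n}}$ with a zero of order at least $k_{0}$ at $a$ and at least $n-k_{0}$ at $b$ is a scalar multiple of $p_{n,k_{0}}$: taking $n=2s+1$ and $k_{0}=k+2$ we conclude that the right-hand side equals $A_{\pm}\cdot p_{2s+1,k+2}$ for some constant $A_{\pm}$. To read off $A_{\pm}$ I would differentiate $k+2$ times at $0$: the normalization $p_{2s+1,k+2}^{(k+2)}(0)=1$ yields $A_{\pm}$ on the left, and Leibniz applied to the right gives $(k+2)p_{2s-1,k}^{(k+1)}(0)-(k+1)p_{2s\pm,k+1}^{(k+2)}(0)$, which is exactly the stated formula. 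The main obstacle is the bookkeeping of zero orders and dimensions across the three different eigenvalue vectors $\Lambda_{2s-1}$, $\Lambda_{2s\pm}$, and $\Lambda_{2s+1}$ simultaneously, and in particular recognizing that the cancellation at order $k+1$ at $a=0$ succeeds precisely because the coefficient $(k+1)$ in front of $p_{2s\pm,k+1}$ is forced to match the Leibniz factor produced by multiplication by $x$.
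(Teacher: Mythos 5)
Your proof is correct and follows essentially the same route as the paper: membership of the right-hand side in $E_{\Lambda_{2s+1}}$, zero counting at $a$ and $b$ with the Leibniz cancellation at order $k+1$, the uniqueness of the Bernstein basis function with prescribed vanishing orders, and a final $(k+2)$-fold differentiation at $0$ to identify the constant. The only (cosmetic) difference is direction: the paper posits $f_{\pm}=a\cdot x\cdot p_{2s-1,k}+b\cdot p_{2s\pm,k+1}$ and solves for $a,b$, whereas you start from the stated combination and exhibit it as a scalar multiple of $p_{2s+1,k+2}$, which has the slight advantage of not needing $A_{\pm}\neq0$.
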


\begin{proof}
Let us recall that $p_{\Lambda_{n},k}$ is the unique element in $E_{\Lambda
_{n}}$ which has $k$ zeros in $0$ and $n-k$ zeros in $1$ with the
normalization
\begin{equation}
k!\lim_{x\rightarrow0}\frac{p_{\Lambda_{n},k}\left(  x\right)  }{x^{k}%
}=p_{\Lambda_{n}}^{\left(  k\right)  }\left(  0\right)  =1. \label{eqnormaln}%
\end{equation}
Consider $f_{\pm}\left(  x\right)  =a\cdot x\cdot p_{2s-2,k}+b\cdot
p_{2s\pm,k+1}\left(  x\right)  $ for coefficients $a$ and $b$ which we want to
define later. Clearly $f_{\pm}$ is in $E_{\Lambda_{2s+1}}.$ Note that
$p_{2s-1,k}$ has a zero of order $2s-1-k$ in $1,$and that $p_{2s\pm
,k+1}\left(  x\right)  $ has a zero of order $2s-\left(  k+1\right)  $ in $1.$
Hence $f_{\pm}$ has a zero of order at least $2s-1-k$ in $1$ (just as
$p_{2s+1,k+2}).$ For similar reasons we see that $f_{\pm}$ has a zero of order
at least $k+1$ in $0.$ We choose now the constants $a$ and $b$ in such a way
that $f_{\pm}$ has a zero of order $k+2$ in $0$ and that $f$ satisfies the
normalization in (\ref{eqnormaln}). By the uniqueness we infer that $f_{\pm
}=p_{2s+1,k+2}.$ Clearly $f_{\pm}$ has a zero of order $k+2$ in $0$ if
\[
a\cdot\left(  \frac{d^{k+1}}{dx^{k+1}}\left[  x\cdot p_{2s-1,k}\right]
\right)  _{x=0}+b\cdot p_{2s\pm,k+1}^{\left(  k+1\right)  }\left(  0\right)
=0.
\]
Recall that $p_{2s\pm,k+1}^{\left(  k+1\right)  }\left(  0\right)  =1$ and
\[
p_{2s-1,k}\left(  x\right)  =\frac{1}{k!}x^{k}+\frac{p_{2s-1,k}^{\left(
k+1\right)  }\left(  0\right)  }{\left(  k+1\right)  !}x^{k+1}+\frac
{p_{2s-1,k}^{\left(  k+2\right)  }\left(  0\right)  }{\left(  k+2\right)
!}x^{k+1}+...
\]
It follows that
\[
\left(  \frac{d^{k+1}}{dx^{k+1}}\left[  x\cdot p_{2s-1,k}\left(  x\right)
\right]  \right)  _{x=0}=\frac{d^{k+1}}{dx^{k+1}}\left[  \frac{1}{k!}%
x^{k+1}+...\right]  _{x=0}=k+1.
\]
Thus $b=-a\left(  k+1\right)  .$ The normalization condition gives the second
equation
\[
1=p_{2s+1,k+2}^{\left(  k+2\right)  }\left(  0\right)  =a\cdot\left(
\frac{d^{k+2}}{dx^{k+2}}\left[  x\cdot p_{2s-1,k}\right]  \right)
_{x=0}-a\cdot\left(  k+1\right)  p_{2s\pm,k+1}^{\left(  k+2\right)  }\left(
0\right)  .
\]
Again we see that
\[
\frac{d^{k+2}}{dx^{k+2}}\left[  x\cdot p_{2s-1,k}\left(  x\right)  \right]
_{x=0}=\left(  k+2\right)  p_{2s-1,k}^{\left(  k+1\right)  }\left(  0\right)
.
\]
So we obtain
\[
1=a\left(  \left(  k+2\right)  p_{2s-1,k}^{\left(  k+1\right)  }\left(
0\right)  -\left(  k+1\right)  p_{2s\pm,k+1}^{\left(  k+2\right)  }\left(
0\right)  \right)  .
\]
The proof is complete.
\end{proof}

\begin{theorem}
\label{ThmR2}For each $k=0,...,2s-3$ holds the following recursion formula
\begin{equation}
A_{k}\cdot p_{2s+1,k+4}=x^{2}\cdot p_{2s-3,k}-\left(  k+1\right)  \left(
k+2\right)  p_{2s-1,k+2}+B_{k}\cdot x\cdot p_{2s-1,k+2} \label{eqBernrec}%
\end{equation}
where the constants $A_{k}$ and $B_{k}$ are defined by
\begin{align*}
B_{k}  &  =\frac{k+2}{k+3}\left(  \left(  k+1\right)  p_{2s-1,k+2}^{\left(
k+3\right)  }\left(  0\right)  -\left(  k+3\right)  p_{2s-3,k}^{\left(
k+1\right)  }\left(  0\right)  \right)  ,\\
A_{k}  &  =\left(  k+3\right)  \left(  k+4\right)  p_{2s-3,k}^{\left(
k+2\right)  }\left(  0\right)  -\left(  k+1\right)  \left(  k+2\right)
p_{2s-1,k+2}^{\left(  k+4\right)  }\left(  0\right) \\
&  +\left(  k+4\right)  p_{2s-1,k+2}^{\left(  k+3\right)  }\left(  0\right)
\cdot B_{k}.
\end{align*}
\end{theorem}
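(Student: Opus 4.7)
The plan is to mimic the argument of Theorem \ref{ThmR1} but with an ansatz combining three summands, motivated by the observation that the right-hand side of (\ref{eqBernrec}) must lie in $E_{\Lambda_{2s+1}}$ and carry the correct zero structure at both endpoints. Concretely, I would define
\[
g(x) := x^{2} \cdot p_{2s-3,k}(x) + c \cdot p_{2s-1,k+2}(x) + d \cdot x \cdot p_{2s-1,k+2}(x)
\]
with constants $c, d$ to be determined, and aim to show $g = A_{k} \cdot p_{2s+1,k+4}$ for the constant $A_{k}$ in the statement.

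First I would verify that $g \in E_{\Lambda_{2s+1}}$: writing elements of $E_{\Lambda_{m}}$ as $P(x) e^{\lambda x} + Q(x) e^{\mu x}$ with polynomial degree bounds set by the multiplicities of $\lambda$ and $\mu$, one sees that $x^{2} \cdot E_{\Lambda_{2s-3}} \subseteq E_{\Lambda_{2s+1}}$ and $x \cdot E_{\Lambda_{2s-1}} \subseteq E_{\Lambda_{2s+1}}$, so all three summands belong to $E_{\Lambda_{2s+1}}$. Next, each summand inherits a zero of order at least $2s-3-k = 2s+1-(k+4)$ at $x=1$ from the vanishing of $p_{2s-3,k}$ or $p_{2s-1,k+2}$ at $1$, matching the requirement for $p_{2s+1,k+4}$ there. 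Thus only the zero of exact order $k+4$ at $x=0$ and the normalization condition remain to be arranged.

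For this I would compute $g^{(k+2)}(0)$, $g^{(k+3)}(0)$, and $g^{(k+4)}(0)$ via the Leibniz rule, using the normalizations $p_{2s-3,k}^{(k)}(0) = p_{2s-1,k+2}^{(k+2)}(0) = 1$ and the elementary identities
\[
\frac{d^{m}}{dx^{m}}\bigl[x^{2} f(x)\bigr]_{x=0} = m(m-1)\, f^{(m-2)}(0), \qquad \frac{d^{m}}{dx^{m}}\bigl[x f(x)\bigr]_{x=0} = m\, f^{(m-1)}(0).
\]
The condition $g^{(k+2)}(0)=0$ forces $c = -(k+1)(k+2)$, since only the $x^{2} p_{2s-3,k}$ summand contributes to that order. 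With $c$ fixed, the condition $g^{(k+3)}(0)=0$ becomes a linear equation in $d$ whose unique solution is precisely $B_{k}$ as stated. Substituting these values into $g^{(k+4)}(0)$ and applying the normalization $p_{2s+1,k+4}^{(k+4)}(0)=1$ yields exactly the expression claimed for $A_{k}$. By uniqueness of the Bernstein basis function (Proposition \ref{PropExist}), the function $g/A_{k}$, having a zero of order $k+4$ at $0$, a zero of order $2s-3-k$ at $1$, and the right normalization, must coincide with $p_{2s+1,k+4}$.

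The main obstacle will be the Leibniz bookkeeping, especially the two successive corrections needed to kill both the $(k+2)$-nd and the $(k+3)$-rd derivative of $g$ at $0$ without disturbing the higher-order vanishing at $x=1$. This is routine once the derivative identities above are applied systematically, but it is genuinely more delicate than the single-correction step in Theorem \ref{ThmR1}, and care is needed to collect the factorial prefactors correctly so that the final expression for $A_{k}$ matches the stated formula.
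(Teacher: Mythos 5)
Your proposal is correct and follows essentially the same route as the paper: the same three-term ansatz $x^{2}p_{2s-3,k}+c\,p_{2s-1,k+2}+d\,x\,p_{2s-1,k+2}$, the same zero-count at $b$, determination of $c$ and $d$ from the vanishing of the $(k+2)$-nd and $(k+3)$-rd derivatives at $a=0$, the normalization giving $A_{k}$, and uniqueness of the Bernstein basis to conclude. (One small wording slip: in the equation $g^{(k+2)}(0)=0$ the summand $c\,p_{2s-1,k+2}$ also contributes, namely $c\cdot p_{2s-1,k+2}^{(k+2)}(0)=c$; it is only the $d$-term that drops out, and your value $c=-(k+1)(k+2)$ is nevertheless the right one.)
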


\begin{proof}
Consider $f\left(  x\right)  =a\cdot x^{2}\cdot p_{2s-3,k}+b\cdot
p_{2s-1,k+2}+c\cdot x\cdot p_{2s-1,k+2}$ for coefficients $a,b$ and $c$ which
we want to define later. Clearly $f$ is in $E_{\Lambda_{2s+1}}.$ Note that the
functions $p_{2s-3,k},p_{2s-1,k+2}$ and $p_{2s-1,k+2}$ have a zero of order at
least $2s-3-k$ in $1.$ Hence $f$ has a zero of order at least $2s-3-k$ in $1$
(just as $p_{2s+1,k+4}).$ Clearly $f$ has a zero of order at least $k+2$ in
$0.$ We choose now the constants $a,b$ and $c$ in such a way that $f$ has a
zero of order $k+4$ in $0$ and that $f$ satisfies the normalization in
(\ref{eqnormaln}). By the uniqueness we infer that $f=p_{2s+1,k+4}.$ Clearly
$f$ has a zero of order $k+4$ in $0$ if and only if
\[
f^{\left(  k+2\right)  }\left(  0\right)  =f^{\left(  k+3\right)  }\left(
0\right)  =0.
\]
Recall that $p_{2s-1,k+2}^{\left(  k+2\right)  }\left(  0\right)  =1.$ Since
$x\cdot p_{2s-1,k+2}$ has a zero of order $k+3$ in $0$ the equation
$f^{\left(  k+2\right)  }\left(  0\right)  =0$ is equivalent to
\[
a\cdot\left(  \frac{d^{k+2}}{dx^{k+2}}\left[  x^{2}\cdot p_{2s-3,k}\right]
\right)  _{x=0}+b=0.
\]
Consider the Taylor expansions of $p_{2s-3,k}\left(  x\right)  $ and
$p_{2s-1,k+2}$
\begin{align*}
p_{2s-3,k}\left(  x\right)   &  =\frac{1}{k!}x^{k}+\frac{p_{2s-3,k}^{\left(
k+1\right)  }\left(  0\right)  }{\left(  k+1\right)  !}x^{k+1}+\frac
{p_{2s-3,k}^{\left(  k+2\right)  }\left(  0\right)  }{\left(  k+2\right)
!}x^{k+2}+...\\
p_{2s-1,k+2}  &  =\frac{1}{\left(  k+2\right)  !}x^{k+2}+\frac{p_{2s-1,k+2}%
^{\left(  k+3\right)  }\left(  0\right)  }{\left(  k+3\right)  !}x^{k+3}+.....
\end{align*}
It follows that
\[
\left(  \frac{d^{k+2}}{dx^{k+2}}\left[  x^{2}\cdot p_{2s-3,k}\left(  x\right)
\right]  \right)  _{x=0}=\frac{d^{k+2}}{dx^{k+2}}\left[  \frac{1}{k!}%
x^{k+2}\right]  _{x=0}=\left(  k+1\right)  \left(  k+2\right)  .
\]
Hence $b=-a\left(  k+1\right)  \left(  k+2\right)  .$ The equation $f^{\left(
k+3\right)  }\left(  0\right)  =0$ implies that
\[
a\cdot\left(  \frac{d^{k+3}}{dx^{k+3}}\left[  x^{2}\cdot p_{2s-3,k}\right]
\right)  _{x=0}+bp_{2s-1,k+2}^{\left(  k+3\right)  }+c\left(  \frac{d^{k+3}%
}{dx^{k+3}}\left[  x\cdot p_{2s-1,k+2}\right]  \right)  _{x=0}=0.
\]
Clearly
\begin{align*}
\left(  \frac{d^{k+3}}{dx^{k+3}}\left[  x^{2}\cdot p_{2s-3,k}\right]  \right)
_{x=0}  &  =\left(  \frac{d^{k+3}}{dx^{k+3}}\frac{p_{2s-3,k}^{\left(
k+1\right)  }\left(  0\right)  }{\left(  k+1\right)  !}x^{k+3}\right)
_{x=0}\\
&  =\left(  k+3\right)  \left(  k+2\right)  p_{2s-3,k}^{\left(  k+1\right)
}\left(  0\right)
\end{align*}
and similarly
\[
\left(  \frac{d^{k+3}}{dx^{k+3}}\left[  x\cdot p_{2s-1,k+2}\right]  \right)
_{x=0}=\left(  \frac{d^{k+3}}{dx^{k+3}}\frac{1}{\left(  k+2\right)  !}%
x^{k+3}\right)  _{x=0}=\left(  k+3\right)  .
\]
So with $b=-a\left(  k+1\right)  \left(  k+2\right)  $ we obtain the equation
\[
a\left(  k+3\right)  \left(  k+2\right)  p_{2s-3,k}^{\left(  k+1\right)
}\left(  0\right)  -a\left(  k+1\right)  \left(  k+2\right)  p_{2s-1,k+2}%
^{\left(  k+3\right)  }\left(  0\right)  +c\left(  k+3\right)  =0,
\]
which shows that
\[
c\left(  k+3\right)  =a\left(  k+2\right)  \left(  \left(  k+1\right)
p_{2s-1,k+2}^{\left(  k+3\right)  }\left(  0\right)  -\left(  k+3\right)
p_{2s-3,k}^{\left(  k+1\right)  }\left(  0\right)  \right)  .
\]
so $c=B_{k}.$ The normalization condition gives the third equation
\begin{align*}
1  &  =p_{2s+1,k+4}^{\left(  k+4\right)  }\left(  0\right)  =a\cdot\left(
\frac{d^{k+4}}{dx^{k+4}}\left[  x^{2}\cdot p_{2s-3,k}\left(  x\right)
\right]  \right)  _{x=0}\\
&  -a\left(  k+1\right)  \left(  k+2\right)  p_{2s-1,k+2}^{\left(  k+4\right)
}\left(  0\right)  +c\left(  \frac{d^{k+4}}{dx^{k+4}}\left[  x\cdot
p_{2s-1,k+2}\left(  x\right)  \right]  \right)  _{x=0}.
\end{align*}
Again we see that
\begin{align*}
\left(  \frac{d^{k+4}}{dx^{k+4}}\left[  x^{2}\cdot p_{2s-3,k}\left(  x\right)
\right]  \right)  _{x=0}  &  =\left(  \frac{d^{k+4}}{dx^{k+4}}\left[
\frac{p_{2s-3,k}^{\left(  k+2\right)  }\left(  0\right)  }{\left(  k+2\right)
!}x^{k+4}\right]  \right)  _{x=0}\\
&  =\left(  k+3\right)  \left(  k+4\right)  p_{2s-3,k}^{\left(  k+2\right)
}\left(  0\right)  ,
\end{align*}
and
\begin{align*}
\left(  \frac{d^{k+4}}{dx^{k+4}}\left[  x\cdot p_{2s-1,k+2}\left(  x\right)
\right]  \right)  _{x=0}  &  =\left(  \frac{d^{k+4}}{dx^{k+4}}\left[
\frac{p_{2s-1,k+2}^{\left(  k+3\right)  }\left(  0\right)  }{\left(
k+3\right)  !}x^{k+4}\right]  \right)  _{x=0}\\
&  =\left(  k+4\right)  p_{2s-1,k+2}^{\left(  k+3\right)  }\left(  0\right)  .
\end{align*}
So we arrive at the equation
\begin{align*}
a^{-1}  &  =\left(  k+3\right)  \left(  k+4\right)  p_{2s-3,k}^{\left(
k+2\right)  }\left(  0\right)  -\left(  k+1\right)  \left(  k+2\right)
p_{2s-1,k+2}^{\left(  k+4\right)  }\left(  0\right) \\
&  +\left(  k+4\right)  p_{2s-1,k+2}^{\left(  k+3\right)  }\left(  0\right)
B_{k}.
\end{align*}
\end{proof}

\begin{corollary}
\label{CorRec2}Suppose that $\mu=-\lambda.$ Then the fundamental function
$\Phi_{2s+1}\left(  x\right)  $ satisfies the following recursion
\[
\Phi_{2s+1}\left(  x\right)  =\frac{1}{4s\left(  s-1\right)  }x^{2}\cdot
\Phi_{2s-3}\left(  x\right)  -\frac{2s-1}{2s}\Phi_{2s-1}\left(  x\right)  .
\]
\end{corollary}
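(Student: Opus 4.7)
The plan is to specialize Theorem \ref{ThmR2} to the extremal index $k = 2s-3$ and convert the resulting Bernstein-basis identity into the claimed recursion for the fundamental functions. The key observation is that $p_{\Lambda_n,n}=\Phi_{\Lambda_n}$ for every $\Lambda_n$: indeed, $p_{\Lambda_n,n}$ is by definition the unique element of $E_{\Lambda_n}$ with a zero of order $n$ at $0$ and normalization $p_{\Lambda_n,n}^{(n)}(0)=1$, which is exactly the defining property of $\Phi_{\Lambda_n}$. Taking $k=2s-3$ in Theorem \ref{ThmR2} we have $k+4=2s+1$, $k+2=2s-1$, so the identity reads
\[
A_{2s-3}\,\Phi_{2s+1}(x)=x^{2}\,\Phi_{2s-3}(x)-(2s-2)(2s-1)\,\Phi_{2s-1}(x)+B_{2s-3}\cdot x\cdot\Phi_{2s-1}(x).
\]

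The second step is to evaluate the Taylor coefficients appearing in the constants $A_{2s-3}$ and $B_{2s-3}$ at $0$ using Proposition \ref{PropTaylor}. For the vector $\Lambda_{2s-1}(1,-1)$, consisting of $s$ copies of $1$ and $s$ copies of $-1$, the sum of eigenvalues is zero, so
\[
\Phi_{2s-1}^{(2s)}(0)=\lambda_{0}+\cdots+\lambda_{2s-1}=0,
\]
and similarly $\Phi_{2s-3}^{(2s-2)}(0)=0$. Plugging these into the defining formula for $B_{k}$ gives $B_{2s-3}=0$, which eliminates the cross term $x\cdot\Phi_{2s-1}$ and, incidentally, also kills the last summand in the expression for $A_{2s-3}$. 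Moreover, using formula \eqref{phiN2} exactly as in Proposition \ref{PropTaylor3}, i.e.\ computing $h_{2}(\lambda_{0},\ldots,\lambda_{n})=\tfrac{1}{2}(p_{1}^{2}+p_{2})$ when $p_{1}=0$ and $p_{2}$ counts the eigenvalues, one obtains $\Phi_{2s-1}^{(2s+1)}(0)=s$ and $\Phi_{2s-3}^{(2s-1)}(0)=s-1$.

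With these values, the third step is a routine calculation:
\[
A_{2s-3}=2s(2s+1)(s-1)-(2s-2)(2s-1)s=4s(s-1).
\]
Dividing through by $A_{2s-3}$ and simplifying $(2s-2)(2s-1)/(4s(s-1))=(2s-1)/(2s)$ yields exactly
\[
\Phi_{2s+1}(x)=\frac{1}{4s(s-1)}x^{2}\,\Phi_{2s-3}(x)-\frac{2s-1}{2s}\,\Phi_{2s-1}(x),
\]
which is the claim (valid for $s\geq 2$, matching the hypothesis $k=2s-3\geq 0$ of Theorem \ref{ThmR2}). For general $\mu=-\lambda$, one reduces to the case $\lambda=1$ by the scaling identity in Proposition \ref{PropSub}.

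The only subtle step is the cancellation that makes $B_{2s-3}=0$; it is not obvious in advance but follows cleanly from the symmetry of the eigenvalue vector $(1,\ldots,1,-1,\ldots,-1)$, which forces the odd power-sum $p_{1}$ to vanish. Everything else is bookkeeping of Leibniz-type derivatives at the origin, handled uniformly by Proposition \ref{PropTaylor}.
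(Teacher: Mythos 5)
Your proposal is correct and follows essentially the same route as the paper: specialize Theorem \ref{ThmR2} to $k=2s-3$, observe that $B_{2s-3}=0$ because the eigenvalue sums vanish, and evaluate $A_{2s-3}=2s(2s+1)(s-1)-(2s-2)(2s-1)s=4s(s-1)$ using the Taylor coefficients from Propositions \ref{PropTaylor} and \ref{PropTaylor3}. The only cosmetic differences are your use of the symmetric-function identity $h_{2}=\tfrac{1}{2}(p_{1}^{2}+p_{2})$ in place of the paper's iterative computation, and your explicit remarks on the range $s\geq 2$ and the reduction to $\lambda=1$ via Proposition \ref{PropSub}.
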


\begin{proof}
Let us take $k=2s-3$ in equation (\ref{eqBernrec}): then $p_{2s+1,2s+1}%
=\Phi_{2s+1}$ and $p_{2s-3,2s-3}=\Phi_{2s-3}$ and $p_{2s-1,2s-1}=\Phi_{2s-1}$.
By Proposition \ref{PropTaylor}
\[
\Phi_{2s-3}^{\left(  2s-2\right)  }\left(  0\right)  =\left(  s-2\right)
\lambda+\left(  s-2\right)  \mu=0.
\]
and similarly $\Phi_{2s-1}^{\left(  2s\right)  }=0.$ Hence for $k=2s-3$ the
last summand in (\ref{eqBernrec}) is zero and the formula amounts to
\[
A_{2s-3}\cdot\Phi_{2s+1}=x^{2}\cdot\Phi_{2s-3}-\left(  2s-2\right)  \left(
2s-1\right)  \Phi_{2s-1}.
\]
We have to compute the constant $A_{k}$ for $k=2s-3.$ Since $\Phi
_{2s-1}^{\left(  2s\right)  }\left(  0\right)  =0$ and $\Phi_{2s-3}^{\left(
2s-2\right)  }\left(  0\right)  =0$ the constant $B_{k}$ is zero and we have
\[
A_{2s-3}=2s\left(  2s+1\right)  \Phi_{2s-3}^{\left(  2s-1\right)  }\left(
0\right)  -\left(  2s-2\right)  \left(  2s-1\right)  \Phi_{2s-1}^{\left(
2s+1\right)  }\left(  0\right)  .
\]
Proposition \ref{PropTaylor3} shows that $\Phi_{2s+1}^{\left(  2s+3\right)
}\left(  0\right)  =s+1$, so $\Phi_{2s-3}^{\left(  2s-1\right)  }\left(
0\right)  =s-1$ and we see that
\begin{align*}
A_{2s-3}  &  =2s\left(  2s+1\right)  \left(  s-1\right)  -\left(  2s-2\right)
\left(  2s-1\right)  s\\
&  =2s\left(  s-1\right)  \left(  2s+1-2s+1\right)  =4s\left(  s-1\right)  .
\end{align*}
Hence
\[
\Phi_{2s+1}=\frac{1}{4s\left(  s-1\right)  }x^{2}\cdot\Phi_{2s-3}-\frac
{2s-1}{2s}\Phi_{2s-1}.
\]
\end{proof}

Let us use the notation $q_{2s+1,l}=p_{2s+1,2s+1-l}.$ Then the recurrence
relation in this notation means (with $l=2s+1-\left(  k+4\right)  $, so
$k=2s-3-l)$ that
\[
A_{2s-3-l}\cdot q_{2s+1,l}=x^{2}\cdot q_{2s-3,l}-\left(  2s-2-l\right)
\left(  2s-1-l\right)  q_{2s-1,l}+B_{2s-3-l}\cdot x\cdot q_{2s-1,l},
\]
so we have a recurrence relation for fixed $l.$

\section{\label{S6}The derivative of the Bernstein operator}

Let $\mathbb{C}^{\mathbb{N}_{0}}$ be the set of all sequences $y=\left(
y_{0},y_{1},....\right)  =\left(  y_{k}\right)  $ with complex entries. For
$y\in\mathbb{C}^{\mathbb{N}_{0}}$ we shall use the notation
\[
\left(  y\right)  _{k}:=y_{k}%
\]
for describing the $k$-th component of the vector $y.$ The finite difference
operator $\Delta:\mathbb{C}^{\mathbb{N}_{0}}\rightarrow\mathbb{C}%
^{\mathbb{N}_{0}}$ is defined for $y=\left(  y_{0},y_{1},....\right)  =\left(
y_{k}\right)  \in\mathbb{C}^{\mathbb{N}_{0}}$ by
\[
\left(  \Delta y\right)  _{k}:=y_{k+1}-y_{k}.
\]
Higher differences are defined inductively by setting $\Delta^{n+1}%
y=\Delta\left(  \Delta^{n}y\right)  $ where $\Delta^{0}$ is defined as the
identity operator. The difference operator $\Delta$ is useful in the classical
theory to describe the derivative of the Bernstein polynomial
\[
B_{n}f\left(  x\right)  =\sum_{k=0}^{n}f\left(  \frac{k}{n}\right)  \binom
{n}{k}x^{k}\left(  1-x\right)  ^{n-k},
\]
namely
\[
\frac{d}{dx}(B_{n}f)(x)=n\sum_{k=0}^{n-1}\Delta\left[  f\left(  \frac{k}%
{n}\right)  \right]  \binom{n-1}{k}x^{k}\left(  1-x\right)  ^{n-1-k}.
\]
We want to derive an analog for the Bernstein exponential polynomial.

We recall from \cite{AKR} the following numbers
\[
d_{\Lambda_{n}}^{k,\lambda_{j}}:=\lim_{x\rightarrow b}\frac{\frac{d}%
{dx}p_{\Lambda_{n},k}\left(  x\right)  }{p_{\Lambda_{n}\setminus\lambda_{j}%
,k}\left(  x\right)  }%
\]
which have been important for the construction of the Bernstein operators. We
shall assume that $\lambda_{0}=0$ which facilitates the formulas. We define a
difference operator $\Delta_{\Lambda_{n},\lambda_{j}}:\mathbb{C}%
^{\mathbb{N}_{0}}\rightarrow\mathbb{C}^{\mathbb{N}_{0}}$ for $y=\left(
y_{k}\right)  \in\mathbb{C}^{\mathbb{N}_{0}}$ by
\[
\left(  \Delta_{\Lambda_{n},\lambda_{j}}y\right)  _{k}:=d_{\Lambda_{n}%
}^{k,\lambda_{j}}\cdot y_{k}-d_{\Lambda_{n}}^{k,\lambda_{0}}\cdot y_{k+1}%
\]
By $\Lambda_{n}\setminus\lambda_{j}$ we denote the vector where we have
deleted $\lambda_{j}$. Now we can prove

\begin{theorem}
Let $\lambda_{0}=0$ and $j\in\left\{  0,1,...,n\right\}  .$ For $B_{\left(
\lambda_{0},...,\lambda_{n}\right)  }f$ the following identity holds:
\[
(\frac{d}{dx}-\lambda_{j})B_{\left(  \lambda_{0},...,\lambda_{n}\right)
}f\left(  x\right)  =\sum_{k=0}^{n-1}\Delta_{\Lambda_{n},\lambda_{j}}\left[
f\left(  t_{k}\right)  \right]  \cdot\alpha_{k}\cdot p_{\Lambda_{n}%
\setminus\lambda_{j},k}\left(  x\right)
\]
\end{theorem}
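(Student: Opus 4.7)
The plan is to apply $(\frac{d}{dx}-\lambda_{j})$ termwise to $B_{n}f(x)=\sum_{k=0}^{n}\alpha_{k}f(t_{k})p_{\Lambda_{n},k}(x)$ and to expand each function $g_{n,k}:=(\frac{d}{dx}-\lambda_{j})p_{\Lambda_{n},k}$ in the Bernstein basis of $E_{\Lambda_{n}\setminus\lambda_{j}}$. First I would observe that $g_{n,k}\in E_{\Lambda_{n}\setminus\lambda_{j}}$, since the operator $L(\Lambda_{n})$ factors through $(\frac{d}{dx}-\lambda_{j})$. Because $p_{\Lambda_{n},k}$ has a zero of exact order $k$ at $a$ and $n-k$ at $b$, the function $g_{n,k}$ has a zero of order at least $k-1$ at $a$ and at least $n-k-1$ at $b$. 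Writing $g_{n,k}=\sum_{i=0}^{n-1}c_{i}p_{\Lambda_{n}\setminus\lambda_{j},i}$ and matching zero orders (the $i$th basis function has a zero of order $i$ at $a$ and $n-1-i$ at $b$) forces $i\in\{k-1,k\}$, so with the convention $p_{\Lambda_{n}\setminus\lambda_{j},-1}=p_{\Lambda_{n}\setminus\lambda_{j},n}=0$,
\[
g_{n,k}=A_{k}\,p_{\Lambda_{n}\setminus\lambda_{j},k-1}+B_{k}\,p_{\Lambda_{n}\setminus\lambda_{j},k}.
\]

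Next I would identify the two coefficients. For $B_{k}$, I would take the limit $x\to b$: the term $A_{k}p_{\Lambda_{n}\setminus\lambda_{j},k-1}$ has zero of order $n-k$ at $b$ and $\lambda_{j}p_{\Lambda_{n},k}$ has zero of order $n-k$, both strictly higher than the order $n-1-k$ of $p_{\Lambda_{n}\setminus\lambda_{j},k}$; dividing through by $p_{\Lambda_{n}\setminus\lambda_{j},k}(x)$ therefore isolates $B_{k}=\lim_{x\to b}\frac{d}{dx}p_{\Lambda_{n},k}(x)/p_{\Lambda_{n}\setminus\lambda_{j},k}(x)=d_{\Lambda_{n}}^{k,\lambda_{j}}$. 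For $A_{k}$, I would compare Taylor expansions at $a$: the normalization $p_{\Lambda_{n},k}^{(k)}(a)=1$ gives $g_{n,k}^{(k-1)}(a)=1$, while the second summand on the right contributes $0$ to this derivative and the first contributes $A_{k}\cdot p_{\Lambda_{n}\setminus\lambda_{j},k-1}^{(k-1)}(a)=A_{k}$. Hence $A_{k}=1$.

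Substituting these values, reindexing $k\mapsto k+1$ in the $A$-sum, and collecting like terms yields
\[
(\tfrac{d}{dx}-\lambda_{j})B_{n}f(x)=\sum_{k=0}^{n-1}\bigl[\alpha_{k}\,d_{\Lambda_{n}}^{k,\lambda_{j}}f(t_{k})+\alpha_{k+1}f(t_{k+1})\bigr]\,p_{\Lambda_{n}\setminus\lambda_{j},k}(x).
\]
To match the claimed form it remains to show $\alpha_{k+1}=-\alpha_{k}\,d_{\Lambda_{n}}^{k,\lambda_{0}}$. This is where the hypothesis $\lambda_{0}=0$ enters essentially: by Theorem \ref{Thm1} we have $B_{n}(e^{\lambda_{0}\cdot})\equiv e^{\lambda_{0}x}$, which reduces to $\sum_{k=0}^{n}\alpha_{k}p_{\Lambda_{n},k}\equiv 1$. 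Differentiating this identity and applying the already-established expansion in the special case $j=0$ (for which $\frac{d}{dx}-\lambda_{0}=\frac{d}{dx}$) gives $0=\sum_{k=0}^{n-1}[\alpha_{k}d_{\Lambda_{n}}^{k,\lambda_{0}}+\alpha_{k+1}]p_{\Lambda_{n}\setminus\lambda_{0},k}$, and linear independence of the Bernstein basis in $E_{\Lambda_{n}\setminus\lambda_{0}}$ (via Lemma \ref{Lem1}) forces the coefficients to vanish.

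The main subtlety is precisely this last step: the derivation produces a formula with $d_{\Lambda_{n}}^{k,\lambda_{j}}$ and a bare $\alpha_{k+1}$, but the theorem is stated with a bare $d_{\Lambda_{n}}^{k,\lambda_{0}}$ instead. Reconciling these requires noticing that the coefficient $\alpha_{k+1}$, defined a priori through $B_{n}(e^{\lambda_{0}\cdot})=e^{\lambda_{0}x}$ and $B_{n}(e^{\lambda_{1}\cdot})=e^{\lambda_{1}x}$, automatically satisfies a recursion governed by $\lambda_{0}$ alone. This is what explains the asymmetric appearance of $\lambda_{0}$ in the definition of the difference operator $\Delta_{\Lambda_{n},\lambda_{j}}$.
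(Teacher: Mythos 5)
Your proposal is correct and follows essentially the same route as the paper: termwise application of $\frac{d}{dx}-\lambda_{j}$, the basis recursion $(\frac{d}{dx}-\lambda_{j})p_{\Lambda_{n},k}=p_{\Lambda_{n}\setminus\lambda_{j},k-1}+d_{\Lambda_{n}}^{k,\lambda_{j}}p_{\Lambda_{n}\setminus\lambda_{j},k}$, and the coefficient recursion $\alpha_{k+1}=-\alpha_{k}d_{\Lambda_{n}}^{k,\lambda_{0}}$. The only difference is that you prove inline the two ingredients the paper merely cites --- the basis recursion (Proposition \ref{PropABL}, proved there by the same zero-counting, normalization and limit-at-$b$ arguments) and the $\alpha$-recursion (quoted from \cite{AKR}), the latter via a neat self-contained derivation from the reproduction property $B_{n}(e^{\lambda_{0}\cdot})=1$ together with linear independence of the reduced Bernstein basis.
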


\begin{proof}
From the definition of the Bernstein operator we immediately see that
\[
(\frac{d}{dx}-\lambda_{j})B_{\left(  \lambda_{0},...,\lambda_{n}\right)
}f\left(  x\right)  =\sum_{k=0}^{n}\alpha_{k}f\left(  t_{k}\right)  (\frac
{d}{dx}-\lambda_{j})p_{\left(  \lambda_{0},...,\lambda_{n}\right)  ,k}.
\]
By the next Theorem below we obtain
\begin{align*}
(\frac{d}{dx}-\lambda_{j})B_{\left(  \lambda_{0},...,\lambda_{n}\right)
}f\left(  x\right)   &  =\sum_{k=1}^{n}\alpha_{k}f\left(  t_{k}\right)
p_{\Lambda_{n}\setminus\lambda_{j},k-1}+\sum_{k=0}^{n-1}\alpha_{k}f\left(
t_{k}\right)  d_{\Lambda_{n}}^{k,\lambda_{j}}p_{\Lambda_{n}\setminus
\lambda_{j},k}\\
&  =\sum_{k=0}^{n-1}\left[  \alpha_{k}f\left(  t_{k}\right)  d_{\Lambda_{n}%
}^{k,\lambda_{j}}+\alpha_{k+1}f\left(  t_{k+1}\right)  \right]  \cdot
p_{\Lambda_{n}\setminus\lambda_{j},k}%
\end{align*}
From the construction of the Bernstein operator in \cite{AKR} the following
formula is known:
\[
\alpha_{k+1}=-\alpha_{k}d_{\Lambda_{n}}^{k,\lambda_{0}}.
\]
The proof is accomplished by identity
\[
\alpha_{k+1}f\left(  t_{k+1}\right)  +\alpha_{k}f\left(  t_{k}\right)
d_{\Lambda_{n}}^{k,\lambda_{j}}=\alpha_{k}\left[  -d_{\Lambda_{n}}%
^{k,\lambda_{0}}f\left(  t_{k+1}\right)  +f\left(  t_{k}\right)
d_{\Lambda_{n}}^{k,\lambda_{j}}\right]  .
\]
\end{proof}

From \cite{AKR} we repeat the following result:

\begin{proposition}
\label{PropABL} Define for $k=0,....,n-1$
\begin{equation}
d_{\left(  \lambda_{0},...,\lambda_{n}\right)  }^{k,\lambda_{n}}%
:=\lim_{x\uparrow b}\frac{\frac{d}{dx}p_{\left(  \lambda_{0},...,\lambda
_{n}\right)  ,k}\left(  x\right)  }{p_{\left(  \lambda_{0},...,\lambda
_{n-1}\right)  ,k}\left(  x\right)  } \label{eqPR}%
\end{equation}
Then, for any $k=1,...,n-1,$
\begin{equation}
\left(  \frac{d}{dx}-\lambda_{n}\right)  p_{\left(  \lambda_{0},...,\lambda
_{n}\right)  ,k}=p_{\left(  \lambda_{0},...,\lambda_{n-1}\right)
,k-1}+d_{\left(  \lambda_{0},...,\lambda_{n}\right)  }^{k,\lambda_{n}%
}p_{\left(  \lambda_{0},...,\lambda_{n-1}\right)  ,k}. \label{eqPREC}%
\end{equation}
Furthermore, for $k=0$ we have
\begin{equation}
\left(  \frac{d}{dx}-\lambda_{n}\right)  p_{\left(  \lambda_{0},...,\lambda
_{n}\right)  ,0}=d_{\left(  \lambda_{0},...,\lambda_{n}\right)  }^{\lambda
_{n},0}p_{\left(  \lambda_{0},...,\lambda_{n-1}\right)  ,0}, \label{neu1}%
\end{equation}
while for $k=n$,
\begin{equation}
\left(  \frac{d}{dx}-\lambda_{n}\right)  p_{\left(  \lambda_{0},...,\lambda
_{n}\right)  ,n}=p_{\left(  \lambda_{0},...,\lambda_{n-1}\right)  ,n-1}.
\label{neu2}%
\end{equation}
\end{proposition}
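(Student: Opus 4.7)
The plan is to identify $(\frac{d}{dx}-\lambda_{n})p_{\left(\lambda_{0},...,\lambda_{n}\right),k}$ as an explicit element of $E_{\left(\lambda_{0},...,\lambda_{n-1}\right)}$ by carefully tracking the orders of vanishing at $a$ and $b$, and then pinning down the coefficients by one boundary calculation at each endpoint. First, note that $\left(\frac{d}{dx}-\lambda_{n}\right)$ annihilates the factor $\left(\frac{d}{dx}-\lambda_{n}\right)$ of $L(\Lambda_{n})$, so the image lies in $E_{\left(\lambda_{0},...,\lambda_{n-1}\right)}$, which is $n$-dimensional. Since $p_{\left(\lambda_{0},...,\lambda_{n}\right),k}$ has a zero of exact order $k$ at $a$ and of exact order $n-k$ at $b$, differentiation reduces each order by one while multiplication by $\lambda_{n}$ preserves them; hence $(\frac{d}{dx}-\lambda_{n})p_{\left(\lambda_{0},...,\lambda_{n}\right),k}$ has a zero of order at least $k-1$ at $a$ and at least $n-k-1=(n-1)-k$ at $b$.

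For the generic range $1\le k\le n-1$, both $p_{\left(\lambda_{0},...,\lambda_{n-1}\right),k-1}$ (orders $k-1$ at $a$, $n-k$ at $b$) and $p_{\left(\lambda_{0},...,\lambda_{n-1}\right),k}$ (orders $k$ at $a$, $n-k-1$ at $b$) satisfy these constraints, and by the argument from Lemma \ref{Lem1} they are linearly independent; moreover any $f\in E_{\left(\lambda_{0},...,\lambda_{n-1}\right)}$ with at least $k-1$ zeros at $a$ and at least $n-k-1$ at $b$ must, by the Chebyshev property applied to $a,b$, lie in their linear span (otherwise one could use the remaining $n-2$ basis functions to exhaust the dimension count). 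Therefore
\[
(\tfrac{d}{dx}-\lambda_{n})p_{\left(\lambda_{0},...,\lambda_{n}\right),k}=\alpha\, p_{\left(\lambda_{0},...,\lambda_{n-1}\right),k-1}+\beta\, p_{\left(\lambda_{0},...,\lambda_{n-1}\right),k}.
\]
To get $\alpha$, I would take the $(k-1)$st derivative at $a$: expanding $p_{\left(\lambda_{0},...,\lambda_{n}\right),k}=\frac{1}{k!}(x-a)^{k}+O((x-a)^{k+1})$ using the normalization (\ref{eqnorm}), one sees the left side has $(k-1)$st derivative $1$ at $a$, while on the right only $p_{\left(\lambda_{0},...,\lambda_{n-1}\right),k-1}$ contributes, giving $\alpha=1$. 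For $\beta$, divide both sides by $p_{\left(\lambda_{0},...,\lambda_{n-1}\right),k}$ and let $x\to b$: the $\alpha$-term vanishes because $p_{\left(\lambda_{0},...,\lambda_{n-1}\right),k-1}$ has strictly higher order of vanishing at $b$ than $p_{\left(\lambda_{0},...,\lambda_{n-1}\right),k}$, and the contribution of $\lambda_{n}p_{\left(\lambda_{0},...,\lambda_{n}\right),k}$ vanishes for the same reason; the remaining limit is precisely the definition (\ref{eqPR}) of $d_{\left(\lambda_{0},...,\lambda_{n}\right)}^{k,\lambda_{n}}$.

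The boundary cases $k=0$ and $k=n$ reduce the two-dimensional candidate span to a one-dimensional one, and the formulas (\ref{neu1}), (\ref{neu2}) follow by the same method. For $k=n$, the image has order $\ge n-1$ at $a$ and order $\ge 0$ at $b$, so must be a scalar multiple of $p_{\left(\lambda_{0},...,\lambda_{n-1}\right),n-1}$; evaluating the $(n-1)$st derivative at $a$ via the normalization gives the coefficient $1$. For $k=0$, the image has order $\ge 0$ at $a$ and $\ge n-1$ at $b$, so must be a scalar multiple of $p_{\left(\lambda_{0},...,\lambda_{n-1}\right),0}$, and the scalar is read off as $d_{\left(\lambda_{0},...,\lambda_{n}\right)}^{0,\lambda_{n}}$ by the limit at $b$. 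The main obstacle, in my view, is not any single calculation but the careful bookkeeping ensuring that the inequality on zero orders never becomes degenerate (e.g.\ one must check that $p_{\left(\lambda_{0},...,\lambda_{n-1}\right),k}$ really has a nonzero value of its $(n-k-1)$st derivative at $b$, so that the limit defining $d_{\left(\lambda_{0},...,\lambda_{n}\right)}^{k,\lambda_{n}}$ exists and identifies $\beta$ unambiguously); this is guaranteed by the Chebyshev assumption on $[a,b]$ and the exact-order clause of the Bernstein basis definition.
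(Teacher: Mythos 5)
Your proposal is correct and follows essentially the same route as the paper: identify the image under $\frac{d}{dx}-\lambda_{n}$ as an element of $E_{(\lambda_{0},...,\lambda_{n-1})}$ lying in the span of $p_{(\lambda_{0},...,\lambda_{n-1}),k-1}$ and $p_{(\lambda_{0},...,\lambda_{n-1}),k}$ via the zero counts at $a$ and $b$, then determine the first coefficient from the normalization at $a$ and the second from the limit at $b$ defining $d_{(\lambda_{0},...,\lambda_{n})}^{k,\lambda_{n}}$. The only cosmetic differences are that the paper divides by $(b-x)^{n-k-1}$ rather than by $p_{(\lambda_{0},...,\lambda_{n-1}),k}$ itself (equivalent by (\ref{eqLim})), and handles $k=n$ by noting $p_{(\lambda_{0},...,\lambda_{n}),n}=\Phi_{\Lambda_{n}}$ and invoking (\ref{rec0}) rather than repeating the normalization argument.
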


\begin{proof}
We may assume that $a=0.$ Set
\[
f_{k}:=\left(  \frac{d}{dx}-\lambda_{n}\right)  p_{\left(  \lambda
_{0},...,\lambda_{n}\right)  ,k},
\]
and let $1\leq k\leq n-1.$ Using the fact that $f_{k}$ has a zero of order
$k-1$ at $0$ and of order $n-k-1$ at $b$, it is easy to see that $f_{k}%
=c_{k}p_{\left(  \lambda_{0},...,\lambda_{n-1}\right)  ,k-1}+d_{k}p_{\left(
\lambda_{0},...,\lambda_{n-1}\right)  ,k}$ for some constants $c_{k}$ and
$d_{k}.$ We want to show that $c_{k}=1.$ Note that $p_{\left(  \lambda
_{0},...,\lambda_{n}\right)  ,k}$ has a zero of order $k$ in $0,$ so
\[
\lim_{x\downarrow0}\frac{f_{k}\left(  x\right)  }{x^{k-1}}=\lim_{x\downarrow
0}\frac{1}{x^{k-1}}\frac{d}{dx}p_{\left(  \lambda_{0},...,\lambda_{n}\right)
,k}\left(  x\right)  =\frac{p_{\left(  \lambda_{0},...,\lambda_{n}\right)
,k}^{\left(  k\right)  }\left(  0\right)  }{\left(  k-1\right)  !}=\frac
{1}{\left(  k-1\right)  !}%
\]
where the second equality follows from (\ref{eqLim}) applied to $p_{\left(
\lambda_{0},...,\lambda_{n}\right)  ,k}^{\left(  1\right)  }\left(  x\right)
$ and (\ref{eqnorm}). On the other hand, the equation $f_{k}=c_{k}p_{\left(
\lambda_{0},...,\lambda_{n-1}\right)  ,k-1}+d_{k}p_{\left(  \lambda
_{0},...,\lambda_{n-1}\right)  ,k}$ shows that
\[
\lim_{x\downarrow0}\frac{f_{k}\left(  x\right)  }{x^{k-1}}=c_{k}%
\lim_{x\downarrow0}\frac{p_{\left(  \lambda_{0},...,\lambda_{n-1}\right)
,k-1}\left(  x\right)  }{x^{k-1}}=c_{k}\frac{1}{\left(  k-1\right)  !},
\]
using again Proposition \ref{PropExist}, (\ref{eqnorm}). Hence $c_{k}=1.$ Next
we divide $f_{k}$ by $\left(  b-x\right)  ^{n-k-1}$ to obtain
\begin{align*}
&  \lim_{x\uparrow b}\frac{d_{k}p_{\left(  \lambda_{0},...,\lambda
_{n-1}\right)  ,k}\left(  x\right)  }{\left(  b-x\right)  ^{n-k-1}}%
=\lim_{x\uparrow b}\frac{f_{k}(x)}{\left(  b-x\right)  ^{n-k-1}}\\
&  =\lim_{x\uparrow b}\frac{\frac{d}{dx}p_{\left(  \lambda_{0},...,\lambda
_{n}\right)  ,k}\left(  x\right)  -\lambda_{n}p_{\left(  \lambda
_{0},...,\lambda_{n}\right)  ,k}(x)}{\left(  b-x\right)  ^{n-k-1}}\\
&  =\lim_{x\uparrow b}\frac{\frac{d}{dx}p_{\left(  \lambda_{0},...,\lambda
_{n}\right)  ,k}\left(  x\right)  }{\left(  b-x\right)  ^{n-k-1}}.
\end{align*}
The case $k=0$ follows by noticing that $f_{0}=d_{0}p_{\left(  \lambda
_{0},...,\lambda_{n-1}\right)  ,0}$, solving for $d_{0}$ and taking the limit
as $x\uparrow b$, while the case $k=n$ is an immediate consequence of the fact
that $p_{\left(  \lambda_{0},...,\lambda_{n}\right)  ,n}=\Phi_{\Lambda_{n}}$.
\end{proof}

\section{References}

\end{document}